\newtheorem{remark}{Remark}[section]
\newtheorem{coro}{Corollary}[section]
\numberwithin{equation}{section}
\newcommand{\bbR}{\mathbb R} 
\newcommand{\bbN}{\mathbb N}
\newcommand{\vv}{\boldsymbol{v}}
\newcommand{\vu}{\boldsymbol{u}}
\newcommand{\vx}{\boldsymbol{x}}
\newcommand{\vn}{\boldsymbol{n}}
\newcommand{\vl}{\boldsymbol{l}}
\newcommand{\valpha}{\boldsymbol{\alpha}}
\newcommand{\vbeta}{\boldsymbol{\beta}}
\newcommand{\vnu}{\boldsymbol{\nu}}
\newcommand{\ve}{\boldsymbol{e}}
\newcommand{\vX}{\boldsymbol{X}}
\newcommand{\vzero}{\boldsymbol{0}}
\newcommand{\nof}[1]{\left|#1\right|}
\begin{document}

\title{Upper bound of high-order derivatives for Wachspress coordinates on polytopes
\thanks{This work was supported by the National Natural Science Foundation of China under Grant No. 12171244.}}


\author{Pengjie Tian%
\thanks{School of Mathematical Sciences, Nanjing Normal University, China.
220901007@njnu.edu.cn}
\and Yanqiu Wang%
\thanks{School of Mathematical Sciences, Nanjing Normal University, China.
yqwang@njnu.edu.cn}
}

\maketitle
\begin{abstract}
The gradient bounds \cite{GilletteRandBajaj2012,RandGilletteBajaj2013,gradientbound2014} of generalized barycentric coordinates play an essential role in the $H^1$ norm approximation error estimate
of generalized barycentric interpolations. Similarly, the $H^k$ norm, $k>1$, estimate needs upper bounds of high-order derivatives,
which are not available in the literature.
In this paper, we derive such upper bounds for the Wachspress generalized barycentric coordinates on simple convex $d$-dimensional polytopes, $d\ge 1$. 
The result can be used to prove optimal convergence for Wachspress-based polytopal finite element approximation of, for example, fourth-order elliptic equations.

Another contribution of this paper is to compare various shape-regularity conditions for simple convex polytopes, and to clarify their relations using knowledge from convex geometry.
\end{abstract}

\begin{keywords}
polytopal mesh, generalized barycentric coordinates, Wachspress coordinates, finite element method, interpolation error.
\end{keywords}
\begin{AMS}
65D05, 65N30, 41A25, 41A30.
\end{AMS}

\section{Introduction} 

Generalized barycentric coordinates (GBCs) \cite{floater2015a, HormannSukumar2017, wachspressnew} have been used to construct various finite element methods (FEMs) on polytopal meshes \cite{GBCFEM-Gillette2010,GBCFEM-Gillette2011,GBCFEM-Martin2008,GBCFEM-Milbradt2008,GBCFEM-Sieger2010,GBCFEM-Sukumar2006,GBCFEM-Sukumar2004,GBCFEM-Tabarraei2006,GBCFEM-Wicke2007}. 
Following the standard FEM theory \cite{brennerscott,ciarlet1978}, FEM approximation error analysis relies on the interpolation error analysis.
For second-order elliptic equations, Gillette, Rand and Bajaj \cite{GilletteRandBajaj2012} have shown that the $H^1$ norm 
GBC interpolation error estimate can be derived using the gradient bounds of GBCs. 
They also gave the gradient bounds of three GBCs: the Wachspress coordinates, the Sibson coordinates and the harmonic coordinates, in two dimensions.
The same authors then proved the gradient bound of the mean value coordinates in \cite{RandGilletteBajaj2013}.
Later, Floater, Gillette and Sukumar \cite{gradientbound2014} re-investigated the Wachspress coordinates, where
they 
extended the gradient bound in \cite{GilletteRandBajaj2012} from $2$D planar polygons to simple convex $d$-dimensional polytopes. 
These results helped to establish the approximation error estimate of GBC-based FEM for second-order elliptic equations.

With the development of high-order GBC-based polygonal elements \cite{RandGilletteBajaj2011a,LaiFloater2016,ChenWang2022}, 
it is possible to construct GBC-based FEM for fourth-order or even higher order elliptic partial differential equations. 
A Morley-type GBC-based non-conforming FEM on polygonal meshes \cite{tian2022} has been developed for solving fourth-order plate bending problems.
Its FEM approximation error analysis relies on the interpolation error analysis, 
which in turn relies on the upper bounds of second-order derivatives of GBCs. 
Such upper bounds were not available by the time when \cite{tian2022} was completed,
which has left a gap in the theoretical error analysis. This motivates the current work.
In this paper, we derive upper bounds of arbitrary order derivatives for Wachspress coordinates on simple convex $d$-dimensional polytopes.
The result can be used to prove optimal convergence for Wachspress-based polytopal finite element approximation of fourth-order elliptic equations.
Moreover, through this work, 
we hope to reveal more properties of the GBCs, which have found interesting applications in computer graphics and computational mechanics \cite{HormannSukumar2017}.

To derive the upper bounds,
one may naturally think of extending the analysis of the gradient bound in \cite{gradientbound2014} to high-order derivatives.
Unfortunately, this turns out to be almost impossible, as the analysis in \cite{gradientbound2014} is tailored to retrieve a sharp gradient bound, and thus highly technical.
We propose different techniques to reach the goal.
Similar to \cite{gradientbound2014}, our upper bounds depend on the diameter $h_K$ of polytope $K$,
and the minimal distance $h_*$ between vertices and non-incident facets ($(d-1)$-dimensional faces) of $K$.
The bounds are sharp in asymptotic orders when $h_*=O(h_K)$, but not when $h_*\ll h_K$.
We shall compare various shape regularity assumptions for simple convex polytopes, and
show that $h_* = O(h_K)$ is a practically reasonable assumption in applications such as FEM.

%
 
The paper is organized as follows. In Section 2, we describe the Wachspress coordinates in detail, and present some preliminary tools. 
In Section 3, we derive the upper bounds of arbitrary-order derivatives for Wachspress coordinates. 
In Section 4, we discuss the relation of geometric conditions for simple convex polytopes. 
Supporting numerical results, including an application on solving fourth-order plate bending problems, are presented in Section 5. 

\section{Wachspress coordinates and some preliminary tools}
Various GBCs have been developed, for example, the Wachspress coordinates \cite{wachspress1975,wachspressnew}, the mean value coordinates \cite{mvc-floater-2003}, 
the Gordon-Wixom coordinates \cite{Gordon-Wixom-1974}, the harmonic coordinates \cite{floater2006}, etc. 
In this paper, we focus on the Wachspress coordinates, which are defined on simple, non-degenerate, convex polytopes in arbitrary dimensions. 
The Wachspress coordinates were first constructed by Wachspress \cite{wachspress1975} on convex polygons in 1975.
Later, Warren et. al. \cite{Warren1996, Warren2007} generalized the construction to arbitrary dimensions, and simplified its formula. 
Other equivalent formulae of the Wachspress coordinates can be found in \cite{wachspress-mvc-floater,floater2015a,floater2006}. 

\subsection{Convex polytope}

We follow Def 0.1 in \cite{Ziegler-1994} to define a $d$-dimensional, $d\ge 1$, convex polytope $K\in \bbR^d$,
which is closed and hence compact.
By ``$d$-dimensional" we mean that the affine hull of $K$ fills the entire $\bbR^d$.
Following Def. 2.1 in \cite{Ziegler-1994}, define {\it faces} of $K$ to be the intersection of $K$ with its supporting hyperplanes.
Again, each face of $K$ is a compact set in $\bbR^d$.
Faces with dimension $0$, $1$ and $d-1$ are called {\it vertices, edges} and {\it facets}, respectively.
Denote by $V$ and $F$, respectively, the set of all vertices and the set of all facets of $K$. When $d=2$, $K$ is a polygon and the facets are just edges of the polygon.

A vertex $\vv$ and a facet $f$ are {\it incident} to each other if $\vv \in f$. Otherwise, we say they are non-incident.
For each facet $f\in F$, denote by $V_f$ the set of all vertices of $K$ incident to $f$. 
For each vertex $\vv\in V$, denote by $V_{\vv}$ the set of all vertices of $K$ connected to $\vv$ through an edge, and by $F_{\vv}$ the set of all facets of $K$ incident to $\vv$. 
We use $|\cdot|$ to denote the cardinality of a given set. For example, $\nof{V}$  is the number of vertices of $K$, and $\nof{F_{\vv}}$ is the number of facets incident to $\vv$. 

 Throughout the entire paper, we assume that the polytope $K$ is
\begin{itemize}
\item {\bf convex};
\item {\bf non-degenerate:} adjacent facets do not lie in the same hyperplane;
\item {\bf simple:} each vertex is incident to exactly $d$ facets, i.e., $\nof{F_{\vv}} = d$ for all $\vv \in V$.
\end{itemize}

\begin{remark}
Convex polytopes defined according to Def 0.1 in \cite{Ziegler-1994} are automatically non-degenerate.
In case that readers may follow other possible definitions of ``convex polytopes" which can be degenerate, we explicitly require polytopes to be non-degenerate in this paper.
In $2$ and $3$ dimensions, polytopes in a centroidal Voronoi tessellation \cite{CVT-Du-1999} are convex and non-degenerate. 
\end{remark}

\begin{remark} According to the properties of simple convex polytopes (see Thm 12.12 in \cite{Arne-book-1983}), we have $\nof{V_{\vv}} = d$ for all $\vv \in V$. 
\end{remark}
  
Let $h_K$ be the diameter of $K$. For each point $\vx\in K$ and facet $f\in F$, denote by $h_f(\vx)$ the distance from $\vx$ to the hyperplane containing $f$.
Define \cite{gradientbound2014}
$$ h_* = \min_{f\in F} \min_{\vv\in V\setminus V_f} h_f(\vv),$$ 
which is the minimum distance between non-incident vertices and facets.
It is clear that $h_*$ is strictly greater than $0$
as long as $K$ is convex and non-degenerate.

\subsection{Wachspress coordinates}
A set of functions $\phi_{\vv}: K \rightarrow \bbR$, for $\vv\in V$, is called a set of {\it generalized barycentric coordinates} (GBCs) on $K$, if it satisfies  \cite{floater2015a}
\begin{enumerate}
	\item (Non-negativity) $\phi_{\vv} (\vx)\geq 0$ on $K$, for all $\vx\in K$;
	\item (Linear precision) for all $\vx \in K$
	\begin{equation} \label{eq:GBCLinearPrecision}
		\sum_{\vv\in V} \phi_{\vv}(\vx) = 1, \qquad\quad \sum_{\vv\in V} \phi_{\vv}(\vx)\, \vv = \vx.
	\end{equation}
\end{enumerate}

When $\nof{V}=d+1$, i.e., $K$ is a simplex, the set of GBCs is uniquely determined by \eqref{eq:GBCLinearPrecision}. 
When $\nof{V} > d+1$, the choice of GBCs is no longer unique. In this paper, we focus on the Wachspress GBCs, and will give a description below.

For each facet $f \in F$, denote by $\vn_f$ the unit outward normal vector on $f$.  Then for any $\vx \in K$, we can express $h_f(\vx)$ by
\begin{equation}\label{eq:hf}
	h_f(\vx) = (\vv-\vx)\cdot\vn_f,\qquad \forall\ \vv \in V_f,
\end{equation}
which is just a linear polynomial.
Function $h_f$ is non-negative in $K$, with $h_f(\vx)=0$ only when $\vx\in f$. 

Given $\vv\in V$, let $f_1,f_2,\ldots,f_d$ be the $d$ facets in $F_{\vv}$, ordered such that the matrix 
\begin{equation} \label{eq:Mv}
 M_{\vv} = [\vn_{f_1},\vn_{f_2},\ldots,\vn_{f_d}] \in \bbR^{d\times d}
 \end{equation}
has a positive determinant
(note that ${\rm det}(M_{\vv})$ is non-zero as long as $K$ is non-degenerate).
The Wachspress coordinates are then defined by
$$
	\phi_{\vv}(\vx) = \frac{w_{\vv}(\vx)}{\sum_{\vu\in V}w_{\vu}(\vx)},\quad \forall \vx\in K,
$$
where 
\begin{equation}\label{golbal-weight-function}
	w_{\vv} = {\rm det}(M_{\vv})\prod_{f\in F\setminus F_{\vv}}h_f(\vx).
\end{equation}
Note that $F\setminus F_{\vv}$ can not be empty, thus \eqref{golbal-weight-function} is well-posed.
For simplicity, denote $W = \sum_{\vu\in V}w_{\vu}$ and consequently we can write
\begin{equation}\label{basic-formula}
	\phi_{\vv}(\vx) = \frac{w_{\vv}(\vx)}{W(\vx)},\quad \forall \vx\in K.
\end{equation}
Clearly, each $w_{\vv}$ is non-negative in $K$. Moreover, we have
\begin{lemma}\label{lem:Wnonzero}
Function $W$ is positive in $K$.
\end{lemma}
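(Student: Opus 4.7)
The plan is as follows. Since each $w_{\vv}$ is a product of $\det(M_{\vv})>0$ with nonnegative linear factors $h_f$, we have $w_{\vv}\ge 0$ throughout $K$, so proving positivity of $W$ reduces to producing, at each $\vx\in K$, a vertex $\vv$ with $w_{\vv}(\vx)>0$. Equivalently, one must find a vertex $\vv$ such that no facet in $F\setminus F_{\vv}$ passes through $\vx$; i.e., every facet containing $\vx$ is incident to $\vv$.

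Introduce $F(\vx)=\{f\in F : h_f(\vx)=0\}$, the set of facets whose affine hull contains $\vx$. If $\vx$ lies in the interior of $K$, then $F(\vx)=\emptyset$, so for \emph{any} vertex $\vv$ every factor $h_f(\vx)$ in \eqref{golbal-weight-function} is strictly positive, giving $w_{\vv}(\vx)>0$ immediately. Otherwise $\vx$ lies on the boundary, and one considers the smallest face of $K$ containing $\vx$, namely $G=\bigcap_{f\in F(\vx)} f$. This $G$ is itself a (nonempty, compact, convex) face of $K$, and as such has at least one vertex, which is automatically a vertex of the original polytope. Pick any such $\vv\in G\cap V$. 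Because $\vv$ lies on every $f\in F(\vx)$, we have $F(\vx)\subseteq F_{\vv}$, so every $f\in F\setminus F_{\vv}$ satisfies $h_f(\vx)>0$. Combined with $\det(M_{\vv})>0$ (guaranteed by non-degeneracy), this yields $w_{\vv}(\vx)>0$ and hence $W(\vx)>0$.

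The only nontrivial ingredient is the convex-geometry fact used above, namely that $\bigcap_{f\in F(\vx)} f$ is a face of $K$ and has a vertex. This follows from standard results (e.g.\ Propositions 2.3--2.4 in Ziegler \cite{Ziegler-1994}): an arbitrary intersection of faces of a polytope is again a face, every face of a bounded polytope is itself a polytope, and every bounded polytope has vertices which coincide with vertices of the ambient one. I would cite these rather than reprove them. No use of the simplicity assumption is required for this lemma; simplicity will be needed only later when the determinant factors come into play more carefully.

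The main potential obstacle is purely expository: being careful to distinguish facets from the hyperplanes supporting them (so that ``$\vx$ lies on $f$'' is the correct translation of $h_f(\vx)=0$ within the compact set $K$, even at boundary intersections), and citing the appropriate statements from convex geometry rather than reproving them.
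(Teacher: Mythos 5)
Your proof is correct and follows essentially the same route as the paper: reduce to exhibiting one vertex with $w_{\vv}(\vx)>0$, handle interior points trivially, and for boundary points take a vertex of the minimal face $\bigcap_{f\ni\vx} f$, so that every facet vanishing at $\vx$ is incident to that vertex. You merely spell out (with citations to Ziegler) the convex-geometry facts the paper's proof leaves implicit, so the two arguments coincide in substance.
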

\begin{proof}
Since each $w_{\vv}$ is non-negative in $K$ and $W = \sum_{\vv\in V}w_{\vv}$, we only need to show that for each $\vx\in K$, there exists a $\vv\in V$ such that $w_{\vv}(\vx)>0$.
If $\vx$ lies in the interior of $K$,  then by definition $w_{\vv}(\vx)>0$ for all $\vv\in V$. 
Consider the case when $\vx$ lies on the boundary of $K$. 
The intersection of all facets containing $\vx$ is a face with the smallest dimension that contains $\vx$.
Denote this face by $\sigma$.
Then we have $w_{\vv}(\vx)>0$ for all $\vv\in V\cap\sigma$. 
This completes the proof of the lemma.
\end{proof}

By Lemma \ref{lem:Wnonzero}, function $\phi_{\vv}$ is well-defined and has non-negative value in $K$. 
They also satisfy \eqref{eq:GBCLinearPrecision}, and hence form a set of GBCs.

\begin{remark}
The Wachspress coordinates defined in \cite{Warren1996, Warren2007} uses
\begin{equation}\label{golbal-weight-function-alt}
	w_{\vv} = \frac{{\rm det}(M_{\vv})}{\prod_{f\in F_{\vv}}h_f(\vx)},
\end{equation}
instead of \eqref{golbal-weight-function}. It is clear that they are equivalent as $\phi_{\vv}$ remains the same.
The formula \eqref{golbal-weight-function-alt} may be easier to implement, as illustrated in $2$-dimension \cite{floater2015a}.
However, unlike \eqref{golbal-weight-function}, formula \eqref{golbal-weight-function-alt} is not well-defined on $\partial K$.
Besides, its non-polynomial structure has caused much difficulty when estimating the gradient bounds of $\phi_{\vv}$, as seen in \cite{gradientbound2014}.
In this paper, we find it easier to use \eqref{golbal-weight-function} to derive upper bounds of higher order derivatives.
\end{remark}

\begin{remark}
The case $d=1$ is trivial, with $K$ being a line segment.
All notation and results in this paper apply trivially to the case $d=1$. 
Readers shall focus on the case with $d>1$.
\end{remark}

\subsection{Preliminary tools}
 
We introduce the {\it general Leibniz formula} 
for computing partial derivatives of products,
 the {\it multivariate Fa\`{a} di Bruno formula} \cite{FaadiBrunoFormula} for computing partial derivatives of composite functions,
 and some notation.
 
Let $\bbN$ be the set of natural numbers, $\bbN_0$ be the set of non-negative integers, and $\valpha = (\alpha_1,\alpha_2,\ldots,\alpha_d)\in \bbN_0^d$ be a multi-index. Define
\begin{equation*} 
	\begin{aligned}
		|\valpha| &= \sum_{i=1}^{d}\alpha_i,\\
		\valpha ! &= \alpha_1! \alpha_2!\cdots \alpha_d!,\\
		D^{\valpha} &= \frac{\partial^{|\valpha|}}{\partial x_1^{\alpha_1}\cdots\partial x_d^{\alpha_d}}.
	\end{aligned}
\end{equation*}

For $\vbeta = (\beta_1,\beta_2,\ldots,\beta_d) \in \bbN_0^d$, we say $\vbeta\leq\valpha$ if $\beta_i\leq \alpha_i$, for $i =1,2,\ldots,d$. 
Similarly, one can define $<$, $\ge$ and $>$.
When $\vbeta\leq\valpha$, define
$$
	\left(
	\begin{array}{c}
		\valpha \\
		\vbeta
	\end{array}
	\right) = \prod_{i=1}^{d}
	\left(
	\begin{array}{c}
		\alpha_i \\
		\beta_i
	\end{array}
	\right) = \frac{\valpha!}{\vbeta!(\valpha-\vbeta)!}.
$$
We say $\vbeta \prec \valpha$ provided that one of the following holds:
\begin{enumerate}
	\item[(1)] $|\vbeta| < |\valpha|$;
	\item[(2)] $|\vbeta| = |\valpha|$ and $\beta_1< \alpha_1$; 
	\item[(3)] $|\vbeta| = |\valpha|$, $\beta_1 = \alpha_1,\ldots,\beta_k = \alpha_k$ and $ \beta_{k+1} < \alpha_{k+1}$ for some $1\leq k<d.$
\end{enumerate}

{\bf \textit{General Leibniz formula.}} 
For sufficiently smooth functions $f,\, g: \bbR^d\rightarrow \bbR$ and multi-index $\valpha \in \bbN_0^d$, one has
	$$ 
		D^{\valpha}(fg) = \sum_{\vbeta\leq\valpha} 
		\left(
		\begin{array}{c}
			\valpha \\
			\vbeta
		\end{array}
		\right) \left( D^{\valpha-\vbeta}f \right) \left( D^{\vbeta}g\right).
	$$ 

{\bf \textit{Multivariate Fa\`{a} di Bruno formula}  \cite{FaadiBrunoFormula}.} For sufficiently smooth functions $f: \bbR \rightarrow \bbR$, $g: \bbR^d\rightarrow \bbR$ and multi-index $\vbeta\in \bbN_0^d$ with $\vbeta> \vzero$, one has
	\begin{equation}\label{multivariate-Faa-di-Bruno-formula}
		D^{\vbeta}f(g(\vx)) = \sum_{1\leq\lambda\leq|\vbeta|}
		f^{(\lambda)}(g)\sum_{s=1}^{|\vbeta|}\sum_{p_s(\vbeta,\lambda)}\vbeta!\prod_{j=1}^{s}
		\frac{\left(D^{\vl_j}g(\vx)\right)^{k_j}}{k_j!(\vl_j!)^{k_j}},
	\end{equation}
	where 
	\begin{equation}\label{fa-di-bruno-partition}
		\begin{aligned}
			p_s(\vbeta,\lambda) = &\bigg\{ (k_1,\ldots,k_s; \vl_1,\ldots,\vl_s) : k_i \in\bbN \textrm{ and } \vl_i\in \bbN_0^d\textrm{ satisfy}\\
			&\ \textbf{0} \prec \vl_1 \prec \cdots \prec \vl_s, \sum_{i=1}^{s}k_i = \lambda\ and\ \sum_{i=1}^{s}k_i\vl_i = \vbeta \bigg\}.
		\end{aligned}
	\end{equation}
\begin{remark}
	According to \eqref{fa-di-bruno-partition}, the summation in \eqref{multivariate-Faa-di-Bruno-formula} is a finite sum, in which the total number of terms depends only on $\vbeta$.
\end{remark}

Consider the tensor product of vectors $\vx^{(i)} = [x_{1}^{(i)},x_{2}^{(i)},\ldots,x_{d}^{(i)}]^t \in \bbR^d$, for $i=1\ldots k$,
$$ \vX \triangleq \vx^{(1)}\otimes\vx^{(2)}\otimes\cdots\otimes\vx^{(k)} , 
$$ 
with elements $X_{i_1i_2\cdots i_k} = x_{i_1}^{(1)}x_{i_2}^{(2)}\cdots x_{i_k}^{(k)}$. Define the norm
$$\left|\vX\right| = \left(\sum_{i_1,i_2,\ldots, i_k =1}^{d} X_{i_1i_2\ldots i_k}^2  \right)^{\frac{1}{2}}.$$

\begin{remark}
The norm $|\cdot|$ is an extension of the Euclidean length of vectors, which is in turn an extension of the absolute value of scalars.
Thus it is natural to use the same notation $|\cdot|$ for all of them. This is not to be confused with the cardinality of sets such as $\nof{V}$ and $\nof{F_{\vv}}$.
\end{remark}

\begin{lemma}\label{norm-of-tensor-product}
	For $\vx^{(i)} \in \bbR^d$, $i = 1, 2,\ldots, k,$ 
	$$ \left|\vx^{(1)}\otimes\vx^{(2)}\otimes\cdots\otimes\vx^{(k)}\right| = \prod_{i=1}^k |\vx^{(i)}|.$$
\end{lemma}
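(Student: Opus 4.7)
The plan is a direct computation from the definition of the tensor-product norm: square both sides, expand the sum of squared entries using the explicit formula $X_{i_1 i_2 \cdots i_k} = x_{i_1}^{(1)} x_{i_2}^{(2)} \cdots x_{i_k}^{(k)}$, factor the resulting multi-index sum into a product of single-index sums, and take square roots.

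More concretely, I would first write
\[
\left|\vX\right|^2 = \sum_{i_1,i_2,\ldots,i_k=1}^{d} X_{i_1 i_2 \cdots i_k}^{2} = \sum_{i_1,\ldots,i_k=1}^{d} \bigl(x_{i_1}^{(1)}\bigr)^2 \bigl(x_{i_2}^{(2)}\bigr)^2 \cdots \bigl(x_{i_k}^{(k)}\bigr)^2,
\]
using the elementwise definition given just before the lemma. The key algebraic observation is that the summand is a product in which the $j$-th factor depends only on the index $i_j$, so the nested sum separates:
\[
\sum_{i_1,\ldots,i_k=1}^{d} \prod_{j=1}^{k} \bigl(x_{i_j}^{(j)}\bigr)^2 = \prod_{j=1}^{k} \Bigl(\sum_{i_j=1}^{d}\bigl(x_{i_j}^{(j)}\bigr)^2\Bigr) = \prod_{j=1}^{k} \bigl|\vx^{(j)}\bigr|^2.
\]
Taking the positive square root then yields the claimed identity.

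There is no real obstacle here; the only thing to be slightly careful about is correctly invoking the multi-index factorization of the nested sum (which can, if one prefers, be established rigorously by induction on $k$, using the base case $k=1$ that is just the definition of the Euclidean norm, and the inductive step that separates the last summation index from the rest). The whole argument is one short display plus a remark that norms are non-negative.
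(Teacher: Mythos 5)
Your proof is correct and matches the paper's approach: the paper simply notes the case $k=2$ can be computed directly and the rest follows by induction, which is exactly the factorization of the nested sum that you carry out (and optionally formalize by induction) in full detail.
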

\begin{proof} 
The case $k=2$ can be calculated directly. 
The rest can be proven by induction.
\end{proof}

Denote by $ \nabla^k$ the $k$th order differential operator, which can be viewed formally as the $k$th order tensor product of the gradient operator $\nabla = [ \frac{\partial}{\partial x_1}, \ldots , \frac{\partial}{\partial x_d} ]^t$, i.e.,
$$ 
	\begin{aligned}
		{\tiny k\; \mathrm{times}\hspace{1cm}} \\[-0.2cm]
		\nabla^k = \overbrace{\nabla \otimes \nabla \otimes \cdots \otimes \nabla } .
	\end{aligned}
$$ 

\section{Derivatives of Wachspress coordinates and the upper bound} 
We first derive the general form of arbitrary derivatives for Wachspress coordinates. By the \textit{general Leibniz formula} and \eqref{basic-formula},  for any $\valpha \in\bbN_0^d$ we have
\begin{equation}\label{D-phi-first-step}
	D^{\valpha}\phi_{\vv} =  D^{\valpha}\left( w_{\vv}\cdot \frac{1}{W} \right)
	= \sum_{\vbeta\leq\valpha} \left(
	\begin{array}{c}
		\valpha \\
		\vbeta
	\end{array}
	\right) \left( D^{\valpha-\vbeta}w_{\vv} \right) \left( D^{\vbeta}\frac{1}{W}\right).
\end{equation}

Then we treat $\frac{1}{W}$ as the composite of an inverse proportional function with $W$, and 
use the \textit{multivariate Fa\`{a} di Bruno formula} to get, for $\vbeta > \vzero$,
\begin{equation}\label{D-phi-second-step}
	\begin{aligned}
		D^{\vbeta}\frac{1}{W} &= \sum_{1\leq\lambda\leq |\vbeta|}(-1)^{\lambda}\lambda!W^{-\lambda-1}\sum_{s=1}^{|\vbeta|}\sum_{p_s(\vbeta,\lambda)}\vbeta!\prod_{j=1}^{s}
		\frac{
			\left(D^{\vl_j}W\right)^{k_j}}{k_j! (\vl_j!)^{k_j}}\\
		&= \sum_{1\leq\lambda\leq |\vbeta|}\sum_{s=1}^{|\vbeta|}\sum_{p_s(\vbeta,\lambda)} \frac{(-1)^{\lambda}\lambda!\vbeta!}{W^{\lambda+1}} \prod_{j=1}^{s}
		\frac{\left(D^{\vl_j}W\right)^{k_j}}{k_j! (\vl_j!)^{k_j}}.
	\end{aligned}
\end{equation}
When $\vbeta=\vzero$, one simply has $D^{\vzero}\frac{1}{W} = \frac{1}{W}$.
Combining \eqref{D-phi-first-step} and \eqref{D-phi-second-step} gives
\begin{equation}\label{formula:D-alpha-phi}
	\begin{aligned}
		& D^{\valpha}\phi_{\vv} \\
		= &\sum_{\vbeta\leq\valpha} \left(
		\begin{array}{c}
			\valpha \\
			\vbeta
		\end{array}
		\right) D^{\valpha-\vbeta}w_{\vv} \sum_{1\leq\lambda\leq |\vbeta|}\sum_{s=1}^{|\vbeta|}\sum_{p_s(\vbeta,\lambda)}\frac{(-1)^{\lambda}\lambda!\vbeta!}{W^{\lambda+1}} \prod_{j=1}^{s}
		\frac{\left(D^{\vl_j}W\right)^{k_j}}{k_j! (\vl_j!)^{k_j}}\\
		= & \frac{D^{\valpha}w_{\vv}}{W}
		+ \sum_{\vzero<\vbeta\leq\valpha} \sum_{1\leq\lambda\leq |\vbeta|}\sum_{s=1}^{|\vbeta|}\sum_{p_s(\vbeta,\lambda)} \left(
		\begin{array}{c}
			\valpha \\
			\vbeta
		\end{array}
		\right)(-1)^{\lambda}\lambda!\vbeta! \frac{D^{\valpha-\vbeta}w_{\vv}}{W^{\lambda+1}} \prod_{j=1}^{s}
		\frac{\left(D^{\vl_j}W\right)^{k_j}}{k_j! (\vl_j!)^{k_j}}.
	\end{aligned} 
\end{equation} 
Next, we estimate the upper bound of $|D^{\valpha}\phi_{\vv}|$. 
For any given $\valpha$, the ($4$-layer) summation in \eqref{formula:D-alpha-phi} is a finite sum with the total number of terms depending only on $\valpha$. 
Hence it suffices to estimate (1) the upper bounds of $|D^{\vnu}w_{\vv}|$ and $|D^{\vnu}W|$, for any multi-index $\vnu \in \bbN_0^d$; 
(2) the lower bound of $W$. According to Lemma \ref{lem:Wnonzero}, we know that $W$ is positive on the compact set $K$.
Therefore its lower bound must be strictly above $0$.

\subsection{Upper bound of $|D^{\vnu}w_{\vv}|$ and $ |D^{\vnu}W|$}
Note that $|D^{\vnu}w_{\vv}|\le |\nabla^k w_{\vv}|$, where $k=|\vnu|$.
We shall estimate $|\nabla^kw_{\vv}|$. 
By \eqref{eq:hf} and \eqref{golbal-weight-function}, the weight function $w_{\vv}$ is a polynomial of degree no greater than $\nof{F}-d$.
Thus we only need to consider the case $k\leq \nof{F}-d.$

\begin{lemma}\label{detM-upper-bound}
Let $K$ be a simple, non-degenerate, convex polytope.
For each $\vv\in V$, the matrix $M_{\vv}$ defined in \eqref{eq:Mv} satisfies
	$$\left(\frac{h_*}{h_K}\right)^d \le {\rm det}(M_{\vv})\leq 1.$$
\end{lemma}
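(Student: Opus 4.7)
The plan is to prove the two bounds separately, with the upper bound being an immediate application of Hadamard's inequality and the lower bound relying on a carefully chosen dual matrix of edge vectors.

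For the upper bound $\det(M_{\vv}) \le 1$, since the columns of $M_{\vv}$ are unit vectors $\vn_{f_1}, \ldots, \vn_{f_d}$, Hadamard's inequality yields $\det(M_{\vv}) \le \prod_{i=1}^d \|\vn_{f_i}\| = 1$ immediately.

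For the lower bound, the plan is to exploit the combinatorial structure of a simple convex polytope at the vertex $\vv$. By Remark~2 in the excerpt, $\vv$ has exactly $d$ neighbors $\vu_1, \ldots, \vu_d \in V_{\vv}$ joined to it by edges $e_1, \ldots, e_d$. In a simple polytope, each edge $e_i$ is the intersection of exactly $d-1$ facets in $F_{\vv}$; after relabeling the neighbors so that $e_i \subset f_j$ for every $j \ne i$, we obtain the orthogonality relations $(\vu_i - \vv) \cdot \vn_{f_j} = 0$ for $j \ne i$. Meanwhile, since $\vv \in f_i$, formula \eqref{eq:hf} gives $(\vu_i - \vv) \cdot \vn_{f_i} = -h_{f_i}(\vu_i)$. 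Forming the edge-vector matrix $B = [\vu_1 - \vv, \ldots, \vu_d - \vv]$, these relations assemble into
\begin{equation*}
B^T M_{\vv} = -\,\mathrm{diag}\bigl(h_{f_1}(\vu_1), \ldots, h_{f_d}(\vu_d)\bigr),
\end{equation*}
so that $\det(M_{\vv})\,\det(B) = (-1)^d \prod_{i=1}^d h_{f_i}(\vu_i)$.

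The next step is to verify that each $\vu_i$ is non-incident to $f_i$: if instead $\vu_i \in f_i$, then $\vu_i$ would be incident to all $d$ facets in $F_{\vv}$, and since in a simple polytope the intersection of $d$ facets meeting at $\vv$ is just $\{\vv\}$, we would get $\vu_i = \vv$, a contradiction. Hence $h_{f_i}(\vu_i) \ge h_*$ for each $i$. Combining with Hadamard's inequality $|\det(B)| \le \prod_{i=1}^d \|\vu_i - \vv\| \le h_K^d$ gives
\begin{equation*}
\det(M_{\vv}) = \frac{\prod_{i=1}^d h_{f_i}(\vu_i)}{|\det(B)|} \ge \frac{h_*^d}{h_K^d},
\end{equation*}
where the positivity of $\det(M_{\vv})$ (by the ordering convention in \eqref{eq:Mv}) lets us drop the sign ambiguity.

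The only subtle step, and the part most deserving of care, is the combinatorial claim that each edge at a simple vertex lies in exactly $d-1$ of the $d$ incident facets, together with the labeling of neighbors so that $\vu_i$ corresponds to the facet $f_i$ that does not contain $e_i$. Once this is in place, the linear algebra and the distance estimate follow with essentially no computation.
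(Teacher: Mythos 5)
Your proof is correct and follows essentially the same route as the paper: the paper also forms the edge-vector matrix $E_{\vv}=[\vv_1-\vv,\ldots,\vv_d-\vv]$, uses the orthogonality $\vn_{f_i}\cdot(\vv_j-\vv)=0$ for $j\ne i$ to make $E_{\vv}^T M_{\vv}$ diagonal with entries $\pm h_{f_i}(\vv_i)\ge h_*$ in absolute value, and bounds $|\det E_{\vv}|\le h_K^d$ (via the parallelepiped volume rather than by invoking Hadamard's name) and $\det M_{\vv}\le 1$ likewise. Your explicit verification that $\vu_i\notin f_i$ is a small added detail the paper states without proof, but the substance of the argument is identical.
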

\begin{proof} For $F_{\vv} = \{f_1,f_2,\ldots,f_d\}$, consider a $d$-dimensional parallelepiped $P$ spanned by vectors  $\vn_{f_1},\vn_{f_2},\ldots,\vn_{f_d}$ as follows
$$P(\vn_{f_1},\vn_{f_2},\ldots,\vn_{f_d}) = \left\{ \vx = \sum_{i=1}^{d}t_i\vn_{f_{i}}, 0\leq t_i \leq 1\right\}.$$
In \eqref{eq:Mv} we have assumed that $\{f_1,f_2,\ldots,f_d\}$ is ordered to ensure that $M_{\vv}$ has a positive determinant. Hence $ {\rm det}(M_{\vv})$ is just the volume of $P$.
Since each edge of $P$ has length equal to $1$, we have ${\rm det}(M_{\vv})\leq 1$. 
%
%
%

We then prove the lower bound for $ {\rm det}(M_{\vv})$.
Denote $V_{\vv} = \{\vv_1,\ldots,\vv_d\}$, where $\vv_i$ is the only vertex of $K$ that is adjacent to $\vv$ but non-incident to $f_i$, i.e., $\vv_{i} \notin V_{f_i}$.
Denote by $\ve_i$ be the edge pointing from $\vv$ to $\vv_i$. We conveniently write $\ve_i = \vv_i-\vv$. 
Then the facet $f_i$ lies in the $d-1$ dimensional hyperplane
$$\vv+ {\rm span}\{\ve_1,\ldots,\ve_{i-1},\ve_{i+1},\ldots,\ve_d\}.$$ 
Thus $\vn_{f_i}\cdot \ve_j = 0$, for all $j\neq i.$ 
Define a square matrix $E_{\vv} = [\ve_1,\ve_2,\ldots,\ve_d] \in \bbR^{d\times d}$. 
Then clearly
$$ E_{\vv}^T M_{\vv} = \begin{bmatrix} \ve_1\cdot\vn_{f_1} & 0 & \cdots & 0 \\ 0 & \ve_2\cdot\vn_{f_2} & \cdots & 0 \\ & & \ddots & \\ 0 & 0 & \cdots & \ve_d\cdot\vn_{f_d}\end{bmatrix},$$
which further implies that
\begin{equation}\label{detEdetN-bounds}
 |{\rm det}(E_{\vv})|\ {\rm det}(M_{\vv}) = |{\rm det}(E_{\vv}^T M_{\vv})| = \left|\prod_{i=1}^{d}\ve_i\cdot\vn_{f_i}\right| = \left|\prod_{i=1}^{d}h_{f_i}(\vv_i)\right| \geq h_*^d.
 \end{equation}
Next, let $T_{\vv}$ be the $d$-dimensional parallelepiped spanned by $\ve_1,\ve_2,\ldots,\ve_d$ so that the volume of $T_{\vv}$ is $|{\rm det}(E_{\vv})|$.
Note that each edge of $T_{\vv}$ has length no greater than $h_K$, while the distance from each vertex $\vv_i$, $1\le i\le d$ to a non-incident facet of $T_{\vv}$ has length no less than $h_*$.
By induction one easily gets
\begin{equation}\label{detE-bounds}
	h_K^d \geq |{\rm det}(E_{\vv})| \geq h_*^d.
\end{equation}
Combining \eqref{detEdetN-bounds} and \eqref{detE-bounds} gives the lower bound of ${\rm det}(M_{\vv})$. This completes the proof of the lemma.
\end{proof}

\begin{lemma}\label{bounds-of-nabla-k-wv}
Let $K$ be a simple, non-degenerate, convex polytope.
	For $0\le k\leq \nof{F}-d$, we have
	\begin{equation}
		|\nabla^k w_{\vv}(\vx)| \leq \frac{(\nof{F}-d)!}{(\nof{F}-d-k)!}h_K^{\nof{F}-d-k}, \qquad \forall \vx \in K. 
	\end{equation}
\end{lemma}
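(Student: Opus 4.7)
The plan is to exploit the factored form $w_{\vv}(\vx) = {\rm det}(M_{\vv}) \prod_{f \in F\setminus F_{\vv}} h_f(\vx)$ from \eqref{golbal-weight-function}, in which there are exactly $m := \nof{F}-d$ affine factors. Two facts drive the argument: first, $0 < {\rm det}(M_{\vv}) \le 1$ by Lemma \ref{detM-upper-bound}; second, each $h_f(\vx) = (\vv-\vx)\cdot \vn_f$ is affine in $\vx$, so $\nabla h_f \equiv -\vn_f$ is a unit vector while $\nabla^j h_f \equiv \vzero$ for all $j \ge 2$.

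I would then apply the \emph{general Leibniz formula} iteratively over the $m$ linear factors (equivalently, compute $\partial_{i_1}\cdots\partial_{i_k} w_{\vv}$ by induction on $k$). Because no factor can absorb more than one derivative without vanishing, the only surviving terms are those in which $k$ of the $m$ factors each receive a single derivative. Component-wise this yields
\[
\partial_{i_1}\cdots\partial_{i_k} w_{\vv}(\vx) = (-1)^k {\rm det}(M_{\vv}) \sum_{(f_1,\ldots,f_k)} (\vn_{f_1})_{i_1}\cdots(\vn_{f_k})_{i_k} \prod_{g} h_g(\vx),
\]
where the outer sum ranges over ordered tuples of $k$ distinct facets drawn from $F \setminus F_{\vv}$, the inner product is over the remaining $g \in F \setminus F_{\vv}$ not appearing in the tuple, and the number of tuples is exactly $m!/(m-k)!$.

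Reading this as an identity of $k$-tensors, each summand is ${\rm det}(M_{\vv})$ times the tensor product $\vn_{f_1}\otimes\cdots\otimes \vn_{f_k}$, which has norm exactly $1$ by Lemma \ref{norm-of-tensor-product}, multiplied by a non-negative scalar $\prod_g h_g(\vx) \le h_K^{m-k}$ (since each $h_g$ is at most the diameter $h_K$ on $K$, via Cauchy--Schwarz against any $\vv' \in V_g$). The triangle inequality on the tensor Frobenius norm, together with ${\rm det}(M_{\vv}) \le 1$, then delivers the claimed bound.

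The main technical obstacle is the bookkeeping of the multi-factor product rule, since the paper only states Leibniz for two functions. I would handle this either by induction on $k$ or by induction on $m$, the key point being that no $h_f$ can be differentiated twice. Once that accounting is set up, the estimate is a direct consequence of Lemma \ref{norm-of-tensor-product} and the triangle inequality; notably, neither the shape parameter $h_*$ nor any cancellation among summands is needed at this stage.
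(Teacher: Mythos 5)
Your proposal is correct and follows essentially the same route as the paper: both expand $\nabla^k w_{\vv}$ via the product rule (induction over the affine factors $h_f$, each differentiated at most once) to get a sum of $\frac{(\nof{F}-d)!}{(\nof{F}-d-k)!}$ terms of the form $\det(M_{\vv})\,\vn_{f_{j_1}}\otimes\cdots\otimes\vn_{f_{j_k}}$ times a product of remaining $h_f\le h_K$, then conclude with Lemma \ref{detM-upper-bound}, Lemma \ref{norm-of-tensor-product} and the triangle inequality. No substantive difference.
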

\noindent
\begin{proof} Using induction and noticing that $\nabla h_f$ is constant for every facet $f$, we easily get
\begin{equation}\label{formula:nabla-k-wvv}
	\begin{aligned}
		\nabla^k w_{\vv}(\vx)
		=& {\rm det}(M_{\vv}) \, \nabla^k \left( \prod_{f\in F\setminus F_{\vv}}h_f(\vx) \right)\\
		=&{\rm det}(M_{\vv}) \sum_{f_{j_1}\in F\setminus F_{\vv}} \sum_{f_{j_2}\in F\setminus (F_{\vv}\cup \{f_{j_1}\})  } \ldots\sum_{f_{j_k} \in F\setminus (F_{\vv}\cup \{f_{j_1},\ldots,f_{j_{k-1}}\})}\\
		&\nabla h_{f_{j_1}}\otimes\nabla h_{f_{j_2}}\otimes\cdots\otimes\nabla h_{f_{j_k}}\prod_{f_\in F\setminus (F_{\vv}\cup  \{f_{j_1},f_{j_2},\ldots,f_{j_k}\}) }h_{f}(\vx).
	\end{aligned} 
\end{equation}
When $k=\nof{F}-d$, the set $F\setminus (F_{\vv}\cup  \{f_{j_1},f_{j_2},\ldots,f_{j_k}\})$ is empty and we need to formally define 
$\prod_{f\in \emptyset }h_{f} = 1$.
Note that
$\quad |\nabla h_{f_i}(\vx)| = |-\vn_{f_i}| = 1$.
Using \eqref{formula:nabla-k-wvv}, Lemma \ref{detM-upper-bound} and Lemma \ref{norm-of-tensor-product}, we have
\begin{equation*}
	\begin{aligned}
		|\nabla^k w_{\vv}(\vx)| 
		\leq & \sum_{f_{j_1}\in F\setminus F_{\vv}} \ldots\sum_{f_{j_k} \in F\setminus (F_{\vv}\cup \{f_{j_1},\ldots,f_{j_{k-1}}\})} h_K^{\nof{F}-d-k}\\
		= & \frac{(\nof{F}-d)!}{(\nof{F}-d-k)!}h_K^{\nof{F}-d-k}.
	\end{aligned}
\end{equation*}
\end{proof}

\begin{coro}\label{bounds-of-nabla-W}
	For any multi-index  $\vnu \in \bbN_0^d$ such that $|\vnu| = k \le |F|-d$, we have
	$$ 
		|D^{\vnu}W(\vx)|\leq \frac{\nof{V} (\nof{F}-d)!}{(\nof{F}-d-k)!}h_K^{\nof{F}-d-k}, \qquad for\ all\ \vx \in K.
	$$ 
\end{coro}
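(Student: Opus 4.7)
The corollary follows almost immediately from Lemma \ref{bounds-of-nabla-k-wv} and linearity. My plan is to reduce the statement about the scalar partial derivative $D^{\vnu}W$ to the tensor-norm bound already established for each $|\nabla^k w_{\vv}|$, and then sum over the vertices.

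First I would write $W = \sum_{\vu \in V} w_{\vu}$ and apply linearity of $D^{\vnu}$ followed by the triangle inequality, giving
\begin{equation*}
|D^{\vnu} W(\vx)| \;\le\; \sum_{\vu \in V} |D^{\vnu} w_{\vu}(\vx)|.
\end{equation*}
Next I would observe that for any $k$th order tensor $\vX$ the absolute value of any single entry is bounded by the norm $|\vX|$ defined in Section~2.3 (since $|\vX|$ is the $\ell^2$ norm over all entries, it dominates the $\ell^{\infty}$ norm). Applying this with $\vX = \nabla^k w_{\vu}$ and $k = |\vnu|$ yields $|D^{\vnu} w_{\vu}(\vx)| \le |\nabla^k w_{\vu}(\vx)|$, because the partial derivative $D^{\vnu} w_{\vu}$ is precisely one component of the tensor $\nabla^k w_{\vu}$.

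Finally I would invoke Lemma \ref{bounds-of-nabla-k-wv} on each term, which is applicable since $k = |\vnu| \le |F|-d$ by hypothesis. This gives
\begin{equation*}
|D^{\vnu} w_{\vu}(\vx)| \;\le\; \frac{(|F|-d)!}{(|F|-d-k)!}\, h_K^{|F|-d-k}
\end{equation*}
for every $\vu \in V$, and summing the $|V|$ identical bounds produces exactly the claimed estimate.

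There is no substantive obstacle: the only non-bookkeeping step is the elementary inequality $|D^{\vnu} w_{\vu}| \le |\nabla^{|\vnu|} w_{\vu}|$, which is immediate from the definition of $|\cdot|$ for tensors given just before Lemma \ref{norm-of-tensor-product}. The crux of the work has already been done in Lemma \ref{bounds-of-nabla-k-wv}; this corollary is a straightforward packaging of that bound for the denominator $W$.
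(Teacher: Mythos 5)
Your proposal is correct and follows essentially the same route as the paper: the paper's proof also reduces to $|D^{\vnu}W|\le|\nabla^{k}W|$ together with $W=\sum_{\vv\in V}w_{\vv}$ and then invokes Lemma \ref{bounds-of-nabla-k-wv}, the only cosmetic difference being that you apply the triangle inequality before the entry-versus-tensor-norm bound rather than after. No gaps.
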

\begin{proof}
The result follows from $|D^{\vnu}W(\vx)|\leq |\nabla^k W(\vx)|$ and $W=\sum_{\vv\in V} w_{\vv}$.
\end{proof}


 
\subsection{Lower bound of $ W$}
From Lemma \ref{lem:Wnonzero} we know that $W$ must have a lower bound strictly above $0$ in the compact set $K$.
Our next goal is to estimate this lower bound in terms of $h_K$ and $h_*$. 
To this end, we first present two lemmas.
 
\begin{lemma}\label{lemma:d-faces}
Let $K$ be a simple, non-degenerate, convex polytope.
	For any $\vx\in K$, there are at most $d$ facets of $K$, such that the distance from $\vx$ to these facets are strictly less than $\frac{h_*}{d+1}$.
\end{lemma}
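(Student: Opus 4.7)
My plan is to argue by contradiction. Suppose there exist $d+1$ distinct facets $f_1,\ldots,f_{d+1}$ of $K$ with $h_{f_i}(\vx)<\frac{h_*}{d+1}$ for each $i$. The strategy is to write $\vx$ as a convex combination of vertices of $K$, translate the ``$\vx$ is close to $f_i$'' hypothesis into a lower bound on the weight mass supported on $V_{f_i}$, and then reach a contradiction by a double-counting argument that uses the simple polytope assumption $|F_{\vv}|=d$.

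First, since $K$ is a compact convex polytope it equals the convex hull of its vertex set $V$, so there exist $\lambda_{\vv}\ge 0$ with $\sum_{\vv\in V}\lambda_{\vv}=1$ and $\vx=\sum_{\vv\in V}\lambda_{\vv}\vv$. For any facet $f$, the linearity of $h_f$ together with $h_f(\vv)=0$ for $\vv\in V_f$ and $h_f(\vv)\ge h_*$ for $\vv\in V\setminus V_f$ (the definition of $h_*$) gives
\begin{equation*}
 h_f(\vx)=\sum_{\vv\in V}\lambda_{\vv}\,h_f(\vv)\ge h_*\sum_{\vv\notin V_f}\lambda_{\vv}=h_*\Bigl(1-\sum_{\vv\in V_f}\lambda_{\vv}\Bigr).
\end{equation*}
Applying this to each $f_i$ under the assumption $h_{f_i}(\vx)<\frac{h_*}{d+1}$ yields
\begin{equation*}
 \sum_{\vv\in V_{f_i}}\lambda_{\vv}\;>\;1-\frac{1}{d+1}\;=\;\frac{d}{d+1},\qquad i=1,\ldots,d+1.
\end{equation*}

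Summing these $d+1$ inequalities gives $\sum_{i=1}^{d+1}\sum_{\vv\in V_{f_i}}\lambda_{\vv}>d$. On the other hand, swapping the order of summation,
\begin{equation*}
 \sum_{i=1}^{d+1}\sum_{\vv\in V_{f_i}}\lambda_{\vv}=\sum_{\vv\in V}\lambda_{\vv}\,\bigl|\{i:\vv\in V_{f_i}\}\bigr|=\sum_{\vv\in V}\lambda_{\vv}\,\bigl|F_{\vv}\cap\{f_1,\ldots,f_{d+1}\}\bigr|\le d\sum_{\vv\in V}\lambda_{\vv}=d,
\end{equation*}
where the key inequality uses the simple polytope assumption $|F_{\vv}|=d$ to bound the number of facets in $\{f_1,\ldots,f_{d+1}\}$ incident to any fixed vertex. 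This contradicts the strict lower bound above and proves the lemma.

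The only real subtlety is locating the right place to invoke the simple-polytope hypothesis: the argument would fail if a single vertex could be incident to more than $d$ facets, because then the double-count could exceed $d$. Everything else is routine, and I do not anticipate a substantive technical obstacle.
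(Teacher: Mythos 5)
Your proof is correct and takes essentially the same route as the paper: the paper writes $\vx$ as a convex combination of vertices using the Wachspress coordinates $\phi_{\vv}(\vx)$ themselves (via their linear precision), derives the same facet-wise inequality $\sum_{\vv\in V_{f_i}}\phi_{\vv}(\vx) > \frac{d}{d+1}$, sums over the $d+1$ facets, and reaches the contradiction by the same double count using $|F_{\vv}|=d$. The only immaterial difference is that you use generic barycentric weights $\lambda_{\vv}$ instead of the coordinates $\phi_{\vv}$, which makes the argument marginally more self-contained but changes nothing substantive.
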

\begin{proof} We prove the lemma by contradiction. Fix $\vx\in K$. Assume that there exist $d+1$ facets, denoted by $f_1,f_2,\ldots,f_d,f_{d+1}$, such that 
\begin{equation}\label{d-faces-step-1}
	h_{f_i}(\vx) < \frac{h_*}{d+1},\qquad for\ i = 1,2,\ldots,d+1.
\end{equation}
By the linear precision property of GBCs, we have 
\begin{equation}\label{d-faces-step-3}
	h_{f_i}(\vx) = \sum_{\vv\in V}\phi_{\vv}(\vx)h_{f_i}(\vv) = \sum_{\vv\in V\setminus V_{f_i}}\phi_{\vv}(\vx)h_{f_i}(\vv)\geq \sum_{\vv\in V\setminus V_{f_i}}\phi_{\vv}(\vx)h_*,
\end{equation} 
where in the last step we have used the fact that $\phi_{\vv}$s are non-negative.
Combining inequalities \eqref{d-faces-step-1}-\eqref{d-faces-step-3} and using $\sum_{\vv\in V}\phi_{\vv} = 1$ give
$$\frac{1}{d+1} > \sum_{\vv\in V\setminus V_{f_i}}\phi_{\vv}(\vx) = 1-\sum_{\vv\in V_{f_i}}\phi_{\vv}(\vx).$$
Summing over $i=1\ldots,d+1$ and noticing that every vertex is adjacent to exactly $d$ facets, we have
$$ 
	1>d+1-\sum_{i=1}^{d+1}\sum_{\vv\in V_{f_i}}\phi_{\vv}(\vx)\geq d+1-\sum_{f\in F}\sum_{\vv\in V_{f}}\phi_{\vv}(\vx) = d+1 - d\sum_{\vv\in V}\phi_{\vv}(\vx).
$$ 
Since $\sum_{\vv\in V}\phi_{\vv}(\vx) = 1$, the above inequality yields $1>1$, which is impossible. This completes the proof of the lemma.
\end{proof}

\begin{lemma}\label{common-vertex}
Let $K$ be a simple, non-degenerate, convex polytope.
	Suppose the distance from a point $\vx \in K$ to $k$ facets, $k\ge 1$, are strictly less than $\frac{h_*}{d+1}$, then these $k$ facets have a common vertex.
\end{lemma}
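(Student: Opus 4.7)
The plan is to argue by contradiction, directly following the spirit of the previous lemma but with the union bound replaced by an intersection argument on vertex sets. Let $f_1,\ldots,f_k$ be the $k$ facets with $h_{f_i}(\vx)<\frac{h_*}{d+1}$ for each $i$. By Lemma \ref{lemma:d-faces} we already know $k\le d$. Set $S=\bigcap_{i=1}^{k}V_{f_i}$; the goal is to show $S\neq\emptyset$.

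First I would recycle the inequality that drove the proof of Lemma \ref{lemma:d-faces}. By the linear precision property \eqref{eq:GBCLinearPrecision} together with non-negativity of $\phi_{\vv}$ and the definition of $h_*$, one gets for each $i$
\begin{equation*}
 h_{f_i}(\vx)=\sum_{\vv\in V\setminus V_{f_i}}\phi_{\vv}(\vx)\,h_{f_i}(\vv)\ge h_*\sum_{\vv\in V\setminus V_{f_i}}\phi_{\vv}(\vx),
\end{equation*}
so the assumption $h_{f_i}(\vx)<\frac{h_*}{d+1}$ yields $\sum_{\vv\in V\setminus V_{f_i}}\phi_{\vv}(\vx)<\frac{1}{d+1}$.

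Next I would suppose for contradiction that $S=\emptyset$. Then every vertex of $K$ fails to lie in at least one $V_{f_i}$, which is precisely the statement $V=\bigcup_{i=1}^{k}(V\setminus V_{f_i})$. A union bound against the partition of unity $\sum_{\vv\in V}\phi_{\vv}(\vx)=1$ gives
\begin{equation*}
 1=\sum_{\vv\in V}\phi_{\vv}(\vx)\le\sum_{i=1}^{k}\sum_{\vv\in V\setminus V_{f_i}}\phi_{\vv}(\vx)<\frac{k}{d+1}\le\frac{d}{d+1}<1,
\end{equation*}
which is the desired contradiction.

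There is essentially no hard step here: the only thing one must notice is that after establishing $k\le d$ via Lemma \ref{lemma:d-faces}, the combinatorial obstacle ``$k$ facets share a vertex'' is the \emph{complement} of the union $V=\bigcup_i(V\setminus V_{f_i})$, so the same linear-precision bound applied facet-by-facet and added up closes the argument. The main conceptual point, rather than any technical one, is to reinterpret ``distance to $f_i$ is small'' as ``nearly all Wachspress mass sits on $V_{f_i}$,'' so that incompatibility of the $V_{f_i}$ forces the total mass to exceed $1$.
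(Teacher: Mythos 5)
Your proof is correct and follows essentially the same route as the paper: both use the linear-precision bound $h_{f_i}(\vx)\ge h_*\sum_{\vv\in V\setminus V_{f_i}}\phi_{\vv}(\vx)$, the De Morgan identity $V=\bigcup_i(V\setminus V_{f_i})$ when $\bigcap_i V_{f_i}=\emptyset$, and $k\le d$ from Lemma \ref{lemma:d-faces} to reach the contradiction. The only cosmetic difference is that you divide by $h_*$ before summing over $i$, whereas the paper sums the distances $h_{f_i}(\vx)$ and compares with $h_*$ directly.
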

\begin{proof} Denote the $k$ facets by $f_1,f_2,\ldots,f_k$. By Lemma \ref{lemma:d-faces}, we have $k\leq d$. 
We prove the lemma by contradiction. Suppose that these $k$ facets have no common vertex, i.e $\cap_{i=1}^{k}V_{f_i} = \emptyset$. 
Note that \eqref{d-faces-step-3} still holds. Summing \eqref{d-faces-step-3} over $i=1,\ldots,k$ and using the assumption $\cap_{i=1}^kV_{f_i}=\emptyset$, one has
\begin{equation}\label{common-vertex-1}
\begin{aligned}
	\sum_{i=1}^{k}h_{f_i}(\vx) &\geq \sum_{i=1}^{k}\sum_{\vv\in V\setminus V_{f_i}}\phi_{\vv}(\vx)h_*\\
	& \geq h_* \sum_{\vv\in\cup_{i=1}^k\left( V\setminus V_{f_i}\right)}\phi_{\vv}(\vx) 
	&= h_* \sum_{\vv \in V\setminus\left(\cap_{i=1}^kV_{f_i}\right)}\phi_{\vv}(\vx) \\
	&= h_*\sum_{\vv\in V}\phi_{\vv}(\vx) 
	= h_*.
	\end{aligned}
\end{equation}
On the other hand, we also have
\begin{equation}\label{common-vertex-3}
	\sum_{i=1}^{k}h_{f_i}(\vx) < \frac{kh_*}{d+1} < h_*.
\end{equation}
Combine \eqref{common-vertex-1} and \eqref{common-vertex-3} gives $h_*> h_*$, which can not be true.
This completes the proof of the lemma.
\end{proof}
 
Now we are able to derive a lower bound for $W$.
\begin{lemma}\label{W-low-bounds}
Let $K$ be a simple, non-degenerate, convex polytope.
	For $\vx \in K $, one has
	\begin{equation}\label{ine:W-low-bounds}
		W(\vx) \geq \frac{h_*^{\nof{F}}}{\left(d+1\right)^{\nof{F}-d}h_K^d} .
	\end{equation}
\end{lemma}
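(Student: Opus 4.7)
The plan is to bound $W(\vx)$ from below by picking a single well-chosen vertex $\vv^*$ and estimating $w_{\vv^*}(\vx)$, using the fact that $W(\vx) \ge w_{\vv^*}(\vx)$ since every $w_{\vv}$ is non-negative.

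First, given a fixed $\vx \in K$, I would partition the set of facets $F$ into the ``close'' ones and the ``far'' ones by defining
\[
S = \{ f \in F : h_f(\vx) < h_*/(d+1) \}.
\]
By Lemma \ref{lemma:d-faces}, $|S| \le d$, and by Lemma \ref{common-vertex}, either $S = \emptyset$, or all facets in $S$ share a common vertex. Since $K$ is simple, each vertex is incident to exactly $d$ facets, so whenever $S \ne \emptyset$ I can pick any vertex $\vv^*$ shared by the facets of $S$, giving $S \subseteq F_{\vv^*}$. If $S = \emptyset$, pick any $\vv^* \in V$; then $S \subseteq F_{\vv^*}$ holds trivially.

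Next I would estimate $w_{\vv^*}(\vx) = \det(M_{\vv^*}) \prod_{f \in F \setminus F_{\vv^*}} h_f(\vx)$. Since $S \subseteq F_{\vv^*}$, every $f \in F \setminus F_{\vv^*}$ lies outside $S$ and hence satisfies $h_f(\vx) \ge h_*/(d+1)$. There are $|F|-d$ such facets, so
\[
\prod_{f \in F \setminus F_{\vv^*}} h_f(\vx) \ge \left( \frac{h_*}{d+1} \right)^{|F|-d}.
\]
Combining this with the lower bound $\det(M_{\vv^*}) \ge (h_*/h_K)^d$ from Lemma \ref{detM-upper-bound}, I obtain
\[
W(\vx) \ge w_{\vv^*}(\vx) \ge \left(\frac{h_*}{h_K}\right)^d \left(\frac{h_*}{d+1}\right)^{|F|-d} = \frac{h_*^{|F|}}{(d+1)^{|F|-d} h_K^d},
\]
which is the desired bound.

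The essential conceptual work has already been done in Lemmas \ref{lemma:d-faces} and \ref{common-vertex}; once those are in hand, the remaining argument is a clean selection of $\vv^*$ plus a product estimate. The only subtlety is making sure the case $S = \emptyset$ is covered, which is handled by allowing an arbitrary choice of $\vv^*$ in that case.
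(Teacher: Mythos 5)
Your proof is correct and follows essentially the same route as the paper: both rely on Lemmas \ref{lemma:d-faces} and \ref{common-vertex} to find a single vertex $\vv^*$ whose non-incident facets all satisfy $h_f(\vx)\ge h_*/(d+1)$, then bound $W(\vx)\ge w_{\vv^*}(\vx)$ using the determinant bound of Lemma \ref{detM-upper-bound}. The only cosmetic difference is that the paper treats the case where no facet is ``close'' as a separate case (summing over all vertices and using $\nof{V}\ge 1$), whereas you fold it in by choosing $\vv^*$ arbitrarily, which is a harmless streamlining.
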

\begin{proof} For $\vx \in K$, if $h_f(\vx)\ge \frac{h_*}{d+1}$ for all $f\in F$, then by \eqref{golbal-weight-function} and Lemma \ref{detM-upper-bound}
$$
W(\vx) = \sum_{\vv\in V}w_{\vv}(\vx) \ge \sum_{\vv\in V} \left(\frac{h_*}{h_K}\right)^d \left(\frac{h_*}{d+1} \right)^{\nof{F}-d} = \nof{V} \frac{h_*^{\nof{F}}}{\left(d+1\right)^{\nof{F}-d}h_K^d},
$$
and \eqref{ine:W-low-bounds} follows from $|V|\ge 1$.

If there exists exactly $k$ facets, $k\geq 1$, denoted by $f_1,f_2,\ldots,f_k$, such that $h_{f_i}(\vx) < \frac{h_*}{d+1}$, and
$$ 
	h_f(\vx) \geq \frac{h_*}{d+1},\qquad \forall f\in F\setminus \{ f_1,f_2,\ldots,f_k\}.
$$ 
By Lemma \ref{lemma:d-faces},  we know that $k\leq d$. By Lemma \ref{common-vertex}, these $k$ facets have a common vertex, which is denoted by $\vv_0\in\cap_{i=1}^kV_{f_i}$. It is apparent that 
$\{ f_1,f_2,\ldots,f_k \} \subset F_{\vv_0}$.
Therefore
$$ 
	h_f(\vx) \geq \frac{h_*}{d+1},\qquad \forall f\in F\setminus F_{\vv_0}.
$$ 
Combining the above and using Lemma \ref{detM-upper-bound}, we get
$$ 
	W(\vx) \geq w_{\vv_0}(\vx)  \geq \left(\frac{h_*}{h_K}\right)^d  \left(\frac{h_*}{d+1}\right)^{\nof{F}-d}  =\frac{h_*^{\nof{F}}}{ \left(d+1\right)^{\nof{F}-d} h_K^d} .
$$ 
This completes the proof of the lemma.
\end{proof}

\begin{remark}
Prop. 8 in  \cite{GilletteRandBajaj2012} gives a similar lower bound of $W$ for $2$-dimensional convex polygons, where the proof also uses
$W(\vx) \geq w_{\vv_0}(\vx)$.
However, Prop. 8 in \cite{GilletteRandBajaj2012} picked $\vv_0$ as the vertex closest to $\vx$, breaking any tie arbitrarily.
A counter example in Fig. \ref{fig:antiexample} shows that such a choice may only lead to a $0$ lower bound.
In Fig. \ref{fig:antiexample}, $K$ is a quadrilateral with vertices 
$$\vv_1 = (1,\,1),\quad \vv_2 = (0,\,0),\quad \vv_3 = (2,\,0),\quad \vv_4 =(2,1),$$ 
and satisfies all shape regularity conditions in \cite{GilletteRandBajaj2012}.
Draw the line passing through $\vv_1$ and perpendicular to the line $\vv_2\vv_3$.
For any point $\vx$ in this line, the closest vertex is $\vv_1$. However, when $\vx\to E$, one only gets
$$
W(\vx) \geq w_{\vv_1}(\vx) \to 0.
$$

\begin{figure}[h]
	\centering
	\includegraphics[width=2.2in]{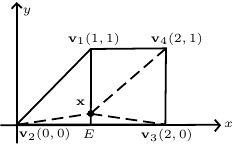}\;
	\caption{{\footnotesize A counter example to the proof of Prop. 8 in \cite{GilletteRandBajaj2012}.}}
	\label{fig:antiexample}
\end{figure}
\end{remark}


\subsection{Main result: the upper bound of $|D^{\valpha}\phi_{\vv}|$}
We are now ready to present the main result on upper bound of arbitrary derivatives of the Wachspress coordinates.
\begin{lemma}\label{bounds-of-phi_v}
Let $K$ be a simple, non-degenerate, convex polytope.
	For $\vx \in K$, one has
	$$ 
		\left|D^{\valpha}\phi_{\vv}\right| \leq C(\nof{F},\nof{V},\valpha,d)  
		 \sum_{\tiny \begin{matrix}0\le\vbeta\leq\valpha\\ |\valpha-\vbeta|\le |F|-d\end{matrix}} \sum_{\textrm{min}\{|\vbeta|,1\}\leq\lambda\leq |\vbeta|} \frac{h_K^{\nof{F}(1+\lambda)-|\valpha|}}{h_*^{\nof{F}(1+\lambda)}} ,
	$$ 
where $C(\nof{F},\nof{V},\valpha,d)$ is a positive constant depending only on $\nof{F}$, $\nof{V}$, $\valpha$ and $d$.
\end{lemma}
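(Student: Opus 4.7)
The plan is to start from the explicit formula \eqref{formula:D-alpha-phi} for $D^{\valpha}\phi_{\vv}$, apply the triangle inequality term by term, and then substitute the three quantitative bounds already established: Lemma \ref{bounds-of-nabla-k-wv} for $|D^{\valpha-\vbeta} w_{\vv}|$, Corollary \ref{bounds-of-nabla-W} for each $|D^{\vl_j} W|^{k_j}$, and Lemma \ref{W-low-bounds} for the factor $W^{-\lambda-1}$. Since the summation in \eqref{formula:D-alpha-phi} is finite with cardinality depending only on $\valpha$ and $d$, all combinatorial constants and factorials can be absorbed into a single constant $C(\nof{F},\nof{V},\valpha,d)$; the nontrivial work is therefore to track the powers of $h_K$ and $h_*$ that emerge.

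First I would separate the $\vbeta=\vzero$ contribution, which gives $D^{\valpha}w_{\vv}/W$ and accounts for the index $\lambda=0$ only when $|\vbeta|=0$; this explains the lower limit $\min\{|\vbeta|,1\}$ on $\lambda$ in the claimed bound. For this term, Lemma \ref{bounds-of-nabla-k-wv} yields $|D^{\valpha}w_{\vv}|\lesssim h_K^{|F|-d-|\valpha|}$ whenever $|\valpha|\le |F|-d$ and $0$ otherwise (which is precisely the constraint $|\valpha-\vbeta|\le |F|-d$ restricted to $\vbeta=\vzero$), and Lemma \ref{W-low-bounds} gives $1/W \lesssim h_K^d/h_*^{|F|}$. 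Multiplying, the exponent of $h_K$ is $|F|-|\valpha|$ and of $h_*$ is $-|F|$, matching the $\lambda=0$ contribution.

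Next, for each $\vbeta>\vzero$, I would invoke Corollary \ref{bounds-of-nabla-W} to get $|D^{\vl_j}W|\lesssim h_K^{|F|-d-|\vl_j|}$, so
\[
\prod_{j=1}^s |D^{\vl_j}W|^{k_j} \lesssim h_K^{\sum_j k_j(|F|-d)-\sum_j k_j|\vl_j|} = h_K^{\lambda(|F|-d)-|\vbeta|},
\]
where the identities $\sum k_j=\lambda$ and $\sum k_j\vl_j=\vbeta$ come directly from the definition of $p_s(\vbeta,\lambda)$ in \eqref{fa-di-bruno-partition}. Combining this with $|D^{\valpha-\vbeta}w_{\vv}|\lesssim h_K^{|F|-d-|\valpha-\vbeta|}$ (valid precisely when $|\valpha-\vbeta|\le|F|-d$, which gives the restriction in the summation) and $W^{-\lambda-1}\lesssim h_K^{d(\lambda+1)}/h_*^{|F|(\lambda+1)}$, the exponent of $h_K$ telescopes to
\[
(|F|-d-|\valpha-\vbeta|) + (\lambda(|F|-d)-|\vbeta|) + d(\lambda+1) = |F|(1+\lambda)-|\valpha|,
\]
and the exponent of $h_*$ is $-|F|(1+\lambda)$, which is exactly the claimed form.

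I do not foresee any serious obstacle: the proof is essentially bookkeeping on top of the three estimates already in hand. The only place demanding care is the exponent calculation above, because several contributions in $h_K$ partially cancel, and one must use the partition identities from \eqref{fa-di-bruno-partition} to combine them cleanly. A minor subtlety is that when $|\valpha-\vbeta|>|F|-d$ the term vanishes because $w_{\vv}$ is a polynomial of degree $|F|-d$; this is what motivates the range restriction appearing in the outer summation of the stated bound, rather than being an extraneous assumption.
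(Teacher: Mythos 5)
Your proposal is correct and follows essentially the same route as the paper's own proof: apply the triangle inequality to \eqref{formula:D-alpha-phi}, insert Lemma \ref{bounds-of-nabla-k-wv}, Corollary \ref{bounds-of-nabla-W} and Lemma \ref{W-low-bounds}, and use the partition identities $\sum_j k_j=\lambda$, $\sum_j k_j\vl_j=\vbeta$ to collapse the $h_K$-exponents to $\nof{F}(1+\lambda)-|\valpha|$. Your handling of the $\vbeta=\vzero$ term (the $\lambda=0$ case) and of the vanishing terms with $|\valpha-\vbeta|>\nof{F}-d$ matches the paper's treatment, so there is nothing to add.
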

\begin{proof}
Using \eqref{formula:D-alpha-phi}, Lemma \ref{bounds-of-nabla-k-wv}, Corollary \ref{bounds-of-nabla-W} and Lemma \ref{W-low-bounds}, we get
\begin{equation*}
	\begin{aligned}
		& |D^{\valpha}\phi_{\vv}|  \\ 
		\leq & C(\nof{F},\nof{V},\valpha,d) \bigg[ \sum_{\tiny \begin{matrix}0<\vbeta\leq\valpha\\ |\valpha-\vbeta|\le |F|-d\end{matrix}} \sum_{1\leq\lambda\leq |\vbeta|}\sum_{s=1}^{|\vbeta|}\sum_{p_s(\vbeta,\lambda)} h_K^{\nof{F}-d-|\valpha-\vbeta|}  \left(\frac{h_*^{\nof{F}}}{h_K^d} \right)^{-(1+\lambda)} \\
		&\qquad\qquad \cdot \prod_{j=1}^{s}
		\frac{1}{k_j! (\vl_j!)^{k_j}}
		  h_K^{(\nof{F}-d-|\vl_j|)k_j}    +  \frac{\textrm{max}\{\nof{F}-|\valpha|, 0\} \, h_K^{\nof{F}-|\valpha|}}{h_*^{\nof{F}}} \bigg] \\
		   = & C(\nof{F},\nof{V},\valpha,d) \bigg[ \sum_{\tiny \begin{matrix}\vbeta\leq\valpha\\ |\valpha-\vbeta|\le |F|-d\end{matrix}} \sum_{1\leq\lambda\leq |\vbeta|}\sum_{s=1}^{|\vbeta|}\sum_{p_s(\vbeta,\lambda)} h_K^{\nof{F}-d-|\valpha-\vbeta|}  \left(\frac{h_*^{\nof{F}}}{h_K^d} \right)^{-(1+\lambda)}\\[2mm]
		&\qquad\qquad \cdot 
		  h_K^{(\nof{F}-d)\lambda- |\vbeta|} +  \frac{\textrm{max}\{\nof{F}-|\valpha|, 0\} \, h_K^{\nof{F}-|\valpha|}}{h_*^{\nof{F}}} \bigg],
		  \end{aligned}
\end{equation*}
where in the last step we have used $(\nof{F}-d)(k_1+\ldots+k_s)-\sum_{j=1}^{s}k_j|\vl_j| = (\nof{F}-d)\lambda- |\vbeta|$.
Notice that all summations in the above are finite sums and $|\valpha-\vbeta|=|\valpha|-|\vbeta|$ when $\vbeta\le \valpha$, we further get
$$
\begin{aligned}
		|D^{\valpha}\phi_{\vv}| &\le  C(\nof{F},\nof{V},\valpha,d) 
		\bigg[ \sum_{\tiny \begin{matrix}0< \vbeta\leq\valpha\\ |\valpha-\vbeta|\le |F|-d\end{matrix}} \sum_{1\leq\lambda\leq |\vbeta|} \frac{h_K^{\nof{F}(1+\lambda)-|\valpha|}}{h_*^{\nof{F}(1+\lambda)}} \\
		&\qquad\qquad +  \frac{\textrm{max}\{\nof{F}-|\valpha|, 0\} \, h_K^{\nof{F}-|\valpha|}}{h_*^{\nof{F}}} \bigg].
		\end{aligned}
$$
This completes the proof of the lemma.
\end{proof}

Lemma \ref{bounds-of-phi_v} immediately implies that:
\begin{theorem} \label{cor:bound}
	When $\nof{F}$, $\nof{V}$ are bounded and $h_* = O(h_K)$, we have
	$$ \left|D^{\valpha}\phi_{\vv}\right| \leq C h_K^{-|\valpha|}, $$
	where $C$ depends only on $\nof{F}$, $\nof{V}$, $\valpha$ and $d$.
\end{theorem}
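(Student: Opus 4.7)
The plan is to deduce Theorem~\ref{cor:bound} directly from Lemma~\ref{bounds-of-phi_v}, since the reduction is purely algebraic bookkeeping of exponents. I would first rewrite the hypothesis $h_* = O(h_K)$ as the quantitative statement that there exists a constant $c > 0$ (independent of $K$) with $h_K/h_* \leq 1/c$; conversely, $h_K \geq h_*$ is automatic from the definitions, so the ratio $h_K/h_*$ is sandwiched between $1$ and $1/c$.

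Next I would take the generic summand appearing in Lemma~\ref{bounds-of-phi_v} and factor it as
$$
\frac{h_K^{\nof{F}(1+\lambda)-|\valpha|}}{h_*^{\nof{F}(1+\lambda)}}
= h_K^{-|\valpha|}\left(\frac{h_K}{h_*}\right)^{\nof{F}(1+\lambda)}.
$$
Applying the bound $h_K/h_* \leq 1/c$ controls this by $(1/c)^{\nof{F}(1+\lambda)}\, h_K^{-|\valpha|}$. Because $\lambda \leq |\vbeta| \leq |\valpha|$ and $\nof{F}$ is assumed bounded, the exponent $\nof{F}(1+\lambda)$ is bounded by a quantity depending only on $\nof{F}$ and $|\valpha|$, so this factor is a constant depending only on $c$, $\nof{F}$, and $\valpha$.

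Finally, I would observe that the two outer summations in Lemma~\ref{bounds-of-phi_v} range over $\vbeta \leq \valpha$ and over $\lambda \leq |\vbeta|$, so the total number of terms is bounded purely in terms of $|\valpha|$. Multiplying this finite combinatorial count by the prefactor $C(\nof{F},\nof{V},\valpha,d)$ from the lemma and by the uniform per-term bound above yields the desired estimate $|D^{\valpha}\phi_{\vv}| \leq C\, h_K^{-|\valpha|}$ with $C$ depending only on $\nof{F}$, $\nof{V}$, $\valpha$, and $d$. I do not anticipate any genuine obstacle here: Lemma~\ref{bounds-of-phi_v} already does the analytic work, and the only care needed is verifying that each $h_*^{-\nof{F}(1+\lambda)}$ is compensated by a matching positive power of $h_K$ so that the combined dependence collapses to $h_K^{-|\valpha|}$, which is precisely what the exponent $\nof{F}(1+\lambda)-|\valpha|$ in the numerator guarantees.
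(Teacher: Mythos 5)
Your proposal is correct and matches the paper's route: the paper gives no separate argument, stating that Theorem \ref{cor:bound} follows immediately from Lemma \ref{bounds-of-phi_v}, and your factorization of each summand as $h_K^{-|\valpha|}(h_K/h_*)^{\nof{F}(1+\lambda)}$ together with the finite term count is exactly that immediate deduction (with the correct reading of $h_*=O(h_K)$ as a lower bound $h_*\ge c\,h_K$, consistent with Assumption \textbf{(H1)}).
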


\begin{remark}
One may wonder if the result in Theorem \ref{cor:bound} can be simply proved through a scaling argument instead of the long process presented in this section.
We point out that there is no affine scaling for general polytopes. Though one can still use a uniform scaling to map $K$ into a ``reference polytope" with characteristic size of $O(1)$, 
and then perform a uniform scaling argument to derive $\left|D^{\valpha}\phi_{\vv}\right| \leq C h_K^{-|\valpha|}$. However in this case the constant $C$ will depend on the shape of $K$.
\end{remark}

\begin{remark}
When $h_* = O(h_K)$, the upper bound in Theorem \ref{cor:bound} is obviously sharp, in the asymptotic order, according to a uniform scaling argument.
Unfortunately, when $h_*\ll h_K$, the upper bound in Lemma \ref{bounds-of-phi_v} is far from sharp, as can be easily seen by examining $d$-dimensional simplices.

When $|\valpha|=1$, a sharp (in asymptotic order) upper bound $\left|D^{\valpha}\phi_{\vv}\right|\le C h_*^{-1}$ has been proved in \cite{gradientbound2014}.
However, the proof in \cite{gradientbound2014} is highly technique and thus can hardly be generalized to cases with $|\valpha|>1$.

The result in this section is mostly useful in the scenario when $h_* = O(h_K)$ and higher order derivative bounds are needed.
In Section 5, we shall further explore this through numerical experiments.
\end{remark}

\section{Geometric assumptions}\label{geometric-assumption}
According to Theorem \ref{cor:bound}, for given dimension $d$ and multi-index $\valpha$, 
the upper bound of the derivatives for Wachspress coordinates can be characterized solely by $\nof{F}$, $\nof{V}$, $h_K$ and $h_*$.
In this section, we discuss conditions on $K$ that ensure these characteristic quantities be bounded.

Let $\rho_K$ be the radius of the largest inscribed ball in $K$ and $R_K$ be the radius of the smallest ball containing $K$ 
(not to be confused with the circumsphere of simplices which must pass through all vertices).
Let $w_K$ be the width of $K$, i.e. the minimal distance of two parallel supporting hyperplanes of $K$. 
Here by ``supporting hyperplane" we mean a $d-1$ dimensional hyperplane whose intersection with $K$ is a face of $K$. 
An illustration in $2$-dimension is given in Figure \ref{fig:width-of-k}.

The relation of $\rho_K$, $R_K$, $w_K$ and $h_K$ has been studied thoroughly by the convex geometry community.
First, it is obvious that 
$$
2\rho_K\le w_K\le h_K\le 2 R_K.
$$
The upper bounds of $R_K/h_K$ and $w_K/\rho_K$ are given by 
the Jung's theorem and the Steinhagen's Theorem, respectively \cite{Betke-Henk-1993,DanzerGruenbaumKlee1963,Eggleston-1958,Grunbaum2003,yaglom1961}. 

{\bf \textit{Jung's theorem}} (see (2.6) in \cite{DanzerGruenbaumKlee1963}). Let $K$ be a simple, non-degenerate, convex polytope in $\bbR^d$, then $h_K$ and $R_K$ satisfy
\begin{equation*}
	h_K \geq R_K \sqrt{\frac{2(d+1)}{d}}.
\end{equation*}

{\bf \textit{Steinhagen's theorem}} (see (1.1) in \cite{Betke-Henk-1993}). Let $K$ be a simple, non-degenerate, convex polytope in $\bbR^d$, then $w_K$ and $\rho_K$ satisfy
$$
  \begin{aligned}
     w_K &\le \rho_K (2d^{\frac{1}{2}}),\qquad &&d\ \textrm{odd},\\
     w_K &\le 2(d+1) \rho_K/(d+2)^{\frac{1}{2}},\qquad &&d\ \textrm{even}.
  \end{aligned}
$$ 

\begin{figure}[h]
	\centering \includegraphics[width=6cm]{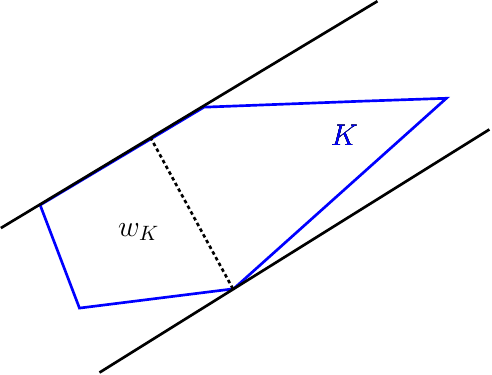}\;
	\caption{{\footnotesize A convex polygon is located between two parallel lines (supporting hyperplanes). The minimum distance between all pairs of parallel supporting hyperplanes is the width of the polygon.}}
	\label{fig:width-of-k}
\end{figure}

Combining the above, one gets the following proposition.
\begin{proposition} \label{prop:geometric}
On $d$-dimensional simple, non-degenerate, convex polytopes, one has $h_K\approx R_K$ and $w_K\approx \rho_K$.
That is, there exist positive constants $C_i$, $i=1,2,3,4$, depending only on $d$ such that 
$$
C_1 R_K \le h_K\le C_2 R_K,\qquad C_3\rho_K\le w_k\le C_4\rho_K.
$$
\end{proposition}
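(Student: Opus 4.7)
The proof is essentially an assembly of the three ingredients already stated just above the proposition: the trivial chain $2\rho_K \le w_K \le h_K \le 2R_K$, Jung's theorem, and Steinhagen's theorem. The plan is to treat the two equivalences separately, each by pairing one trivial side of the chain with the cited theorem on the opposite side.

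For $h_K \approx R_K$, the plan is to take the upper bound $h_K \le 2R_K$ straight from the trivial chain (a set of diameter $h_K$ is contained in some ball of radius at most $h_K$, hence $h_K \le 2R_K$ with equality in simple examples), and the matching lower bound $h_K \ge \sqrt{2(d+1)/d}\,R_K$ directly from Jung's theorem as stated. Then setting $C_1 = \sqrt{2(d+1)/d}$ and $C_2 = 2$ gives the first assertion.

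For $w_K \approx \rho_K$, the plan is the mirror image. The lower bound $w_K \ge 2\rho_K$ comes from the trivial chain and is geometrically clear: any pair of parallel supporting hyperplanes must sandwich the largest inscribed ball, whose diameter is $2\rho_K$, so their distance is at least $2\rho_K$; take the minimum over all such pairs. The matching upper bound is precisely Steinhagen's theorem, which yields $w_K \le 2\sqrt{d}\,\rho_K$ in odd dimension and $w_K \le 2(d+1)\rho_K/\sqrt{d+2}$ in even dimension; in either case the ratio $w_K/\rho_K$ is bounded by a dimension-only constant, which we take to be $C_4$, and $C_3 = 2$.

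Since each of the four inequalities has already been quoted or is geometrically trivial, there is no real obstacle: the only work is to note that the trivial chain $2\rho_K \le w_K \le h_K \le 2R_K$ provides the ``easy'' direction of each equivalence, while Jung's and Steinhagen's theorems supply the ``hard'' direction of the opposite equivalence. The whole argument fits in a few lines.
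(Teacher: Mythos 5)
Your proof is correct and takes essentially the same route as the paper, which likewise deduces the proposition by ``combining the above'', i.e.\ the elementary chain $2\rho_K \le w_K \le h_K \le 2R_K$ together with Jung's theorem for the lower bound on $h_K/R_K$ and Steinhagen's theorem for the upper bound on $w_K/\rho_K$. The explicit constants you record, $C_1=\sqrt{2(d+1)/d}$, $C_2=2$, $C_3=2$, and $C_4$ the dimension-dependent Steinhagen constant, are exactly what that combination yields.
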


We then list some commonly used geometric assumptions on the shape of the polytope $K$ and study their relations.
Constants in the following assumptions may depend on $d$ but not on the shape of $K$. 
For readers not familiar with the finite element language, we emphasize that these constants are not meant for one single polytope,
but for multiple polytopes with various shapes.
\begin{enumerate}
	\item[\textbf{(H1)}]There exists a positive constant $C_* \in \bbR$ such that $h_* \geq C_* h_K$.
	\item[\textbf{(H2)}] (Minimum edge length) There exist a positive constant $D_* \in \bbR$ such that $|\vu-\vv|\geq D_* h_K$ for any $\vu,\vv \in V$, $\vu\neq\vv$.
	\item[\textbf{(H3)}](Maximum vertex count) There exists a positive integer $N_V^*$ such that $ \nof{V} \leq N_V^*.$
	\item[\textbf{(H4)}](Maximum facet count) There exists a positive integer $N_F^*$ such that $ \nof{F} \leq N_F^*.$
    \item[\textbf{(H5)}]There exists a positive constant $C \in \bbR $ such that $ C h_K \le w_K$.
\end{enumerate}

According to Proposition \ref{prop:geometric}, it is not hard to see that Assumption \textbf{(H5)} is equivalent to

\begin{enumerate}
\item[\textbf{(H5')}] (Chunkiness parameter) There exists a positive constant $\gamma^*$, such that 
  $$\frac{h_K}{\rho_K} \le \gamma^*.$$
\end{enumerate}

\begin{remark} 
For convex polytopes which are always star-shaped with respect to its inscribed ball, Assumption \textbf{(H5')} is just the well-known {\bf chunkiness parameter assumption}
in the finite element literature (see definition 4.2.16 in \cite{brennerscott}).
\end{remark}

We now state an important observation.
\begin{lemma}\label{relationship-of-geometric-assumption}
	Let $K \subset \bbR^d$ be a non-degenerate simple convex polytope, then 
	$$\textbf{(H1)} \Rightarrow \textbf{(H2)} \Rightarrow \textbf{(H3)} \Leftrightarrow \textbf{(H4)}\quad and\quad  \textbf{(H1)} \Rightarrow \textbf{(H5)}\Leftrightarrow \textbf{(H5')}.$$
\end{lemma}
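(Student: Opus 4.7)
The plan is to verify each implication individually; most are elementary, and only $\textbf{(H1)} \Rightarrow \textbf{(H5)}$ requires a genuine convex-geometric estimate. For $\textbf{(H1)} \Rightarrow \textbf{(H2)}$ I would fix two distinct vertices $\vu,\vv\in V$: since $K$ is simple, $\vv$ is the unique intersection of its $d$ incident facets, so some $f\in F_{\vv}$ must fail to contain $\vu$; then $|\vu-\vv|\ge h_f(\vu)\ge h_*\ge C_*h_K$, giving (H2) with $D_*=C_*$. For $\textbf{(H2)} \Rightarrow \textbf{(H3)}$ I plan to run a standard packing argument: open balls of radius $D_*h_K/2$ centered at the vertices are pairwise disjoint and lie in a ball of radius $h_K(1+D_*/2)$ around any fixed vertex, so volume comparison in $\bbR^d$ bounds $|V|$ by a constant depending only on $d$ and $D_*$. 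For $\textbf{(H3)} \Leftrightarrow \textbf{(H4)}$ I would double-count vertex-facet incidences: $\sum_{f\in F}|V_f|=\sum_{\vv\in V}|F_{\vv}|=d|V|$, the last equality by simplicity; combined with $|V_f|\ge d$ (each facet is a $(d-1)$-polytope) this gives $|F|\le|V|$, while simplicity makes each vertex the unique intersection of its $d$ incident facets, so $|V|\le\binom{|F|}{d}$. Finally $\textbf{(H5)}\Leftrightarrow\textbf{(H5')}$ is immediate from Proposition~\ref{prop:geometric}.

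The substantive step is $\textbf{(H1)}\Rightarrow\textbf{(H5)}$. My plan is to exhibit a $d$-simplex inside $K$ whose volume is controlled from below by $h_*^d$ and convert this into an inradius lower bound, then use $w_K\ge 2\rho_K$. Concretely, for any vertex $\vv$ with neighbors $V_{\vv}=\{\vv_1,\ldots,\vv_d\}$, convexity gives $T_{\vv}:=\mathrm{conv}\{\vv,\vv_1,\ldots,\vv_d\}\subset K$, and the determinant computation already carried out in the proof of Lemma~\ref{detM-upper-bound} (see~\eqref{detE-bounds}) yields $\mathrm{vol}(T_{\vv})=|\det(E_{\vv})|/d!\ge h_*^d/d!$. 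Each of the $d+1$ faces of $T_{\vv}$ has diameter at most $h_K$, so its $(d-1)$-volume is bounded by a dimensional constant times $h_K^{d-1}$; the simplex identity $\rho_{T_{\vv}}=d\,\mathrm{vol}(T_{\vv})/S_{T_{\vv}}$ then yields $\rho_{T_{\vv}}\ge c_d\,h_*^d/h_K^{d-1}$. Combining $\rho_K\ge\rho_{T_{\vv}}$ with (H1) produces $\rho_K\ge c_dC_*^d\,h_K$, hence $w_K\ge 2\rho_K\ge c\,h_K$ for a constant $c$ depending only on $d$ and $C_*$.

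The hard part will be the step $\textbf{(H1)}\Rightarrow\textbf{(H5)}$: $h_*$ records only vertex-to-non-incident-facet distances, whereas $w_K$ is an infimum of slab widths over all directions, so some concrete convex body must bridge the two notions. The edge-star simplex $T_{\vv}$ is the natural candidate because its skew has already been quantified in the paper via $\det E_{\vv}$; once this is in hand, the passage from volume to inradius is a routine surface-area estimate, and every other implication in the lemma reduces to bookkeeping with simplicity and basic packing.
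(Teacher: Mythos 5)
Your proposal is correct, and its overall skeleton (checking each implication separately, with \textbf{(H1)}$\Rightarrow$\textbf{(H2)} via $|\vu-\vv|\ge h_f(\vu)\ge h_*$ and \textbf{(H5)}$\Leftrightarrow$\textbf{(H5')} via Proposition~\ref{prop:geometric}) matches the paper, but two sub-steps take genuinely different routes. For \textbf{(H3)}$\Leftrightarrow$\textbf{(H4)} the paper gets $\nof{F}\le d\nof{V}$ and then cites Theorem 18.1 of \cite{Arne-book-1983} (with $j=0$) for the converse, whereas you argue elementarily: double-counting incidences gives $\nof{F}\le\nof{V}$, and since non-degeneracy makes the $d$ facet normals at a vertex linearly independent, each vertex is the unique intersection of its incident facet hyperplanes, so distinct vertices carry distinct $d$-subsets of $F$ and $\nof{V}\le\binom{\nof{F}}{d}$; this is self-contained and buys independence from the cited polytope-combinatorics result, at the cost of a worse (but still purely $d$- and $N_F^*$-dependent) constant. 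For \textbf{(H1)}$\Rightarrow$\textbf{(H5)} both arguments hinge on the same key estimate $|{\rm det}(E_{\vv})|\ge h_*^d$ from \eqref{detE-bounds}, but the paper compares $vol(K)\ge\frac{1}{d!}|{\rm det}(E_{\vv})|$ (cf. \eqref{ine:volume-K-low-bound}) against the box bound $vol(K)\le w_Kh_K^{d-1}$ of \eqref{ine:volume-K-up-bound} to bound $w_K$ directly, while you pass through the inscribed ball of the vertex simplex $T_{\vv}=\mathrm{conv}\{\vv,\vv_1,\ldots,\vv_d\}\subset K$, using $\rho_{T_{\vv}}=d\,vol(T_{\vv})/S_{T_{\vv}}$, the surface bound $S_{T_{\vv}}\le C_d h_K^{d-1}$, and $w_K\ge 2\rho_K\ge 2\rho_{T_{\vv}}$; this is slightly longer but has the mild advantage of establishing \textbf{(H5')} (a direct lower bound on $\rho_K$) without routing through the Steinhagen-based equivalence. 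Your version of \textbf{(H2)}$\Rightarrow$\textbf{(H3)} also centers the enclosing ball at a vertex with radius $(1+D_*/2)h_K$ rather than invoking $R_K$ and Jung's theorem, which is a harmless simplification. All steps check out; in particular the inradius identity for a simplex, $T_{\vv}\subset K$ by convexity, and the $(d-1)$-volume bound for faces of diameter at most $h_K$ are all valid, so there is no gap.
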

\begin{proof}
	We first prove \textbf{(H1)} $\Rightarrow$ \textbf{(H2)}. It is clear that for any facet $f\in F_{\vv}$ that does not contain $\vu$ (such an $f$ does exist), one has
	\begin{equation*}
		|\vu-\vv|\geq |(\vu-\vv)\cdot\vn_f| = h_f(\vu) \geq h_* \ge C_* h_K.
	\end{equation*}
		
	\textbf{(H2)} $\Rightarrow$ \textbf{(H3)} has been proved, for $2$-dimension, in Proposition 4 of \cite{GilletteRandBajaj2012}. 
	It is straight forward to extend to arbitrary dimensions. For reader's convenience, we present it below.
	Let $\{B(\vv, \frac{D_*h_K}{2})\}_{\vv \in V}$ be the set of open balls centered at each vertex
	$\vv\in V$ and with radius $\frac{D_*h_K}{2}$. By Assumption \textbf{(H2)}, the balls are disjoint.
	By the definition of $R_K$, there exists a point $\vx\in K$ such that $K\subset \overline{B(\vx,R_K)}$. One has
	$$ \bigcup_{\vv\in V}B(\vv, \frac{D_*h_K}{2}) \subset B(\vx,R_K+\frac{D_*h_K}{2}). $$
	Comparing the volume of the balls gives
	$$ \nof{V} \left(\frac{D_*h_K}{2}\right)^d \leq \left(R_K+\frac{D_*h_K}{2}\right)^d.  $$ 
	Using the Jung's theorem, we arrive at
	$$ \nof{V} \leq
	\left(\frac{\sqrt{\frac{2d}{d+1}} +D_*}{D_*}\right)^d. $$
	
	Next, we prove \textbf{(H3)} $\Leftrightarrow$ \textbf{(H4)}. For $d$-dimensional simple polytopes, one clearly has $\nof{F}\le d\nof{V}$. Thus \textbf{(H3)} $\Rightarrow$ \textbf{(H4)}.
	By setting $j=0$ in Theorem 18.1 of \cite{Arne-book-1983}, one gets \textbf{(H3)} $\Leftarrow$ \textbf{(H4)}.

    Finally, we prove \textbf{(H1)} $\Rightarrow$ \textbf{(H5)}. 
    By \eqref{detE-bounds}, we know that the volume of $K$ satisfies
    \begin{equation}\label{ine:volume-K-low-bound}
    vol(K) \ge \frac{1}{d!} |{\rm det}(E_{\vv})| \ge \frac{1}{d!} h_*^d \ge \frac{C_*^d}{d!} h_K^d,
    \end{equation}
    where the last step follows from Assumption \textbf{(H1)}.
By the definition of $h_K$ and $w_K$, one can always fit a polytope $K$ in a hypercube with size $[0,h_K]^{d-1}\times [0,w_K]$. Hence
    \begin{equation}\label{ine:volume-K-up-bound}
      vol(K) \le w_Kh_K^{d-1}.
    \end{equation}
%
%
%
%
    Combining \eqref{ine:volume-K-low-bound}-\eqref{ine:volume-K-up-bound} gives Assumption \textbf{(H5)}.

Since \textbf{(H5)} is equivalent to \textbf{(H5')}, this completes the proof of the lemma.
\end{proof} 



\begin{remark}
   Assumptions \textbf{(H5)} and \textbf{(H5')} exclude ``thin" polytopes, while Assumption \textbf{(H1)} is stronger. 
   An illustration is given in Fig.\ref{fig:geo-counterexample}.
   \begin{figure}[h]
  \centering
  \includegraphics[width=4cm]{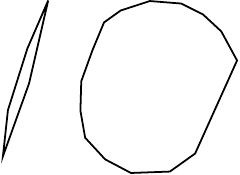}\;
  \caption{The left polygon does not satisfy Assumption \textbf{(H5)} or \textbf{(H5')}. The right polygon satisfies Assumptions \textbf{(H5)} and \textbf{(H5')}, 
  but not \textbf{(H1)}.}\label{fig:geo-counterexample}
\end{figure}
\end{remark}

When $K$ is a non-degenerate, convex polygon in $\bbR^2$, more geometric assumptions can be considered:
\begin{enumerate}
	\item[\textbf{(H6)}](Maximum interior angle) There exists $\alpha^*<\pi $ such that all interior angles of $K$ are bounded above by $\alpha^*$.
	\item[\textbf{(H7)}](Minimum interior angle) There exists $ \alpha_*>0$ such that all interior angles of $K$ are bounded below by $\alpha_*$.
\end{enumerate}  

\begin{lemma} \label{lem:2D}
  Let $K$ be a non-degenerate, convex polygon in $\bbR^2$. In addition to the relations presented in Lemma \ref{relationship-of-geometric-assumption}, one has
  $$
  \textbf{(H6)} \Rightarrow \textbf{(H3)},\qquad  \textbf{(H5')} \Rightarrow \textbf{(H7)},
  $$
  and 
  $$
  \textbf{(H1)} \Leftrightarrow  \bigg(\textbf{(H2)} + \textbf{(H6)} + \textbf{(H7)}\bigg).
  $$
\end{lemma}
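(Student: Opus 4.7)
The plan is to handle the three assertions in the stated order, with the nontrivial work concentrated in the last equivalence. For $\textbf{(H6)} \Rightarrow \textbf{(H3)}$ I will use the fact that the interior angles of a convex $n$-gon sum to $(n-2)\pi$: if every angle is at most $\alpha^*$, then $(n-2)\pi \le n\alpha^*$, so $n \le 2\pi/(\pi - \alpha^*)$. For $\textbf{(H5')} \Rightarrow \textbf{(H7)}$, at any vertex $\vv$ with interior angle $\theta$ the inscribed disk of radius $\rho_K$ must fit inside the infinite wedge of opening $\theta$ formed by the two edges at $\vv$; a standard wedge computation forces the disk's center to be at distance at least $\rho_K/\sin(\theta/2)$ from $\vv$. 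Since this distance is at most $h_K$, we get $\sin(\theta/2) \ge \rho_K/h_K \ge 1/\gamma^*$, hence $\theta \ge 2\arcsin(1/\gamma^*) =: \alpha_*$.

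For the forward direction of $\textbf{(H1)}\Leftrightarrow(\textbf{(H2)}+\textbf{(H6)}+\textbf{(H7)})$, Lemma~\ref{relationship-of-geometric-assumption} already gives $\textbf{(H1)}\Rightarrow\textbf{(H2)}$. For the angle bounds, I take a vertex $\vv$ with the two edge-neighbors $\vv_1,\vv_2$, place $\vv$ at the origin with $\vv_2$ along the positive $x$-axis, so $\vv_1$ lies at angle $\theta_{\vv}$ from that axis (where $\theta_{\vv}$ is the interior angle at $\vv$). Then the distance from the non-incident vertex $\vv_1$ to the line through edge $\vv\vv_2$ equals $|\vv\vv_1|\sin\theta_{\vv}$. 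Assumption $\textbf{(H1)}$ gives $|\vv\vv_1|\sin\theta_{\vv}\ge h_*\ge C_* h_K$, and combined with $|\vv\vv_1|\le h_K$ this yields $\sin\theta_{\vv}\ge C_*$, confining $\theta_{\vv}$ to $[\arcsin C_*,\pi-\arcsin C_*]$. This delivers both $\textbf{(H6)}$ and $\textbf{(H7)}$ simultaneously.

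The substantive step is the converse $\textbf{(H2)}+\textbf{(H6)}+\textbf{(H7)}\Rightarrow\textbf{(H1)}$. Fix a non-incident vertex/edge pair $(\vv,f)$, place $f$ on the $x$-axis with endpoints $u=(0,0)$, $w=(L,0)$ and $K$ in the upper half plane, and label the CCW-ordered vertices $u,w,\vv_{n-1},\ldots,\vv_2,u$. For the neighbors of $u$ and $w$ along the upper arc one computes $y_{\vv_2}=|u\vv_2|\sin\theta_u$ and $y_{\vv_{n-1}}=|w\vv_{n-1}|\sin\theta_w$; $\textbf{(H2)},\textbf{(H6)},\textbf{(H7)}$ then bound both below by $D_* c_0 h_K$ with $c_0 := \min(\sin\alpha_*,\sin\alpha^*)>0$. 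For an interior arc vertex $\vv_j$ I will show $y_{\vv_j}\ge \min(y_{\vv_2}, y_{\vv_{n-1}})$ via concavity of the height along arc length: by convexity, the tangent heading rotates monotonically CCW from $\pi-\theta_w$ (just past $w$) to $\pi+\theta_u$ (just before $u$); hence $dy/ds=\sin(\text{heading})$ is monotonically decreasing from $\sin\theta_w>0$ to $-\sin\theta_u<0$, so $y$ as a function of arc length is concave on the upper arc, forcing its minimum over interior arc vertices to occur at $\vv_2$ or $\vv_{n-1}$. This gives $h_f(\vv)\ge D_* c_0 h_K$ uniformly, so $h_*\ge D_* c_0\, h_K$, which is $\textbf{(H1)}$.

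The main obstacle is the concavity step: it requires justifying that the heading truly rotates monotonically CCW along the upper arc (a consequence of convexity that should be stated cleanly via cumulative exterior angles summing to $\theta_u+\theta_w$) and that the resulting unimodality of $y$ along arc length transfers to the discrete set of vertex heights. The angular bounds $\textbf{(H6)},\textbf{(H7)}$ enter only to keep $\sin\theta_u,\sin\theta_w$ bounded below by the uniform constant $c_0$, and $\textbf{(H2)}$ is used only to keep the two end-edges of the upper arc of length at least $D_* h_K$.
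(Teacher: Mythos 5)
Your proposal is correct in substance but takes a genuinely different, more self-contained route than the paper. The paper proves only one piece directly, namely \textbf{(H1)}$\Rightarrow$\textbf{(H6)},\textbf{(H7)}, via $h_K\sin\theta_i\ge|\vv_{i+1}-\vv_i|\sin\theta_i=h_{f_{i-1}}(\vv_{i+1})\ge h_*\ge C_*h_K$ — essentially identical to your argument for that direction — and delegates everything else to the literature: \textbf{(H6)}$\Rightarrow$\textbf{(H3)} and \textbf{(H5')}$\Rightarrow$\textbf{(H7)} are cited from Gillette--Rand--Bajaj, and the hard implication $(\textbf{(H2)}+\textbf{(H6)}+\textbf{(H7)})\Rightarrow\textbf{(H1)}$ is cited as Corollary 2.4 of Floater--Gillette--Sukumar. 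You instead prove all of these from scratch: the angle-sum count for \textbf{(H6)}$\Rightarrow$\textbf{(H3)}, the wedge/inscribed-disk estimate $\sin(\theta/2)\ge\rho_K/h_K$ for \textbf{(H5')}$\Rightarrow$\textbf{(H7)}, and the upper-arc height argument for the converse implication. What your approach buys is a self-contained elementary proof with an explicit constant $h_*\ge D_*\min(\sin\alpha_*,\sin\alpha^*)\,h_K$; what the paper's route buys is brevity and a sharper pedigree for the cited constant.

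One step needs repair: the claim that $dy/ds=\sin(\mathrm{heading})$ is \emph{monotonically decreasing}, hence that $y$ is concave along the upper arc, is false in general — if $\theta_w>\pi/2$ the heading starts below $\pi/2$ and $\sin(\mathrm{heading})$ initially increases, so $y$ is locally convex there. What is true, and suffices, is exactly the unimodality you flag in your obstacle paragraph: since the heading increases monotonically (nonnegative exterior angles) from $\pi-\theta_w\in(0,\pi)$ to $\pi+\theta_u\in(\pi,2\pi)$, it crosses $\pi$ once, so $dy/ds$ changes sign exactly once from positive to negative; hence $y$ is increasing-then-decreasing along the arc, and every vertex on the sub-arc between $\vv_{n-1}$ and $\vv_2$ has height at least $\min(y_{\vv_2},y_{\vv_{n-1}})\ge D_*\min(\sin\alpha_*,\sin\alpha^*)h_K$. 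With "concave" replaced by this quasi-concavity statement, your argument goes through.
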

\begin{proof}
  \textbf{(H6)} $\Rightarrow$ \textbf{(H3)}  and \textbf{(H5')} $\Rightarrow$ \textbf{(H7)} have been proved in \cite{GilletteRandBajaj2012}.
   It has been proved in Corollary 2.4 of \cite{gradientbound2014} that
$$ \bigg(\textbf{(H2)} + \textbf{(H6)} + \textbf{(H7)}\bigg) \Rightarrow \textbf{(H1)}.$$
Combining the above with Lemma \ref{relationship-of-geometric-assumption}, one only needs to prove \textbf{(H1)} $\Rightarrow$ \textbf{(H6)}.

Denote by $\vv_1,\vv_2,\ldots,\vv_n$ the vertices of $K$, ordered counter-clockwisely. Let $f_i$ be the edge pointing from veterx $\vv_i$ to vertex $\vv_{i+1}$. 
Let $\theta_i$ be the interior angle at vertex $\vv_i$. Since $K$ is non-degenerate and convex, we have $\theta_i <\pi$.
Moreover,
	$$ h_K \sin\theta_i \geq |\vv_{i+1} - \vv_{i}|\sin\theta_i = h_{f_{i-1}}(\vv_{i+1}) \geq h_* \geq C_* h_K.$$
Hence $\sin\theta_i \ge C_*>0$, which imples both \textbf{(H6)} and \textbf{(H7)}.
\end{proof}

From Lemma \ref{relationship-of-geometric-assumption}, we know that Assumption \textbf{(H1)} ensures that $h_*=O(h_K)$ and $\nof{F}$, $\nof{V}$ being bounded,
which implies that Theorem \ref{cor:bound} holds.

\section{Numerical results}  
We only performed numerical experiments in $2$D, not only because $3$-dimensional experiments are too demanding on time and computer hardwares,
but also because carefully designed $2$-dimensional experiments have already yielded rich and interesting results adequate to support the goal of this paper.

Numerical experiments are designed to clarify four issues: (1) Can one easily obtain high quality polygonal meshes with $h_* = O(h_K)$? 
(2) When $h_* = O(h_K)$, do numerical results agree well with Theorem \ref{cor:bound}? 
(3) What happens when $h_* \ll h_K$? 
(4) When does one need a higher ($|\valpha|>1$) derivative bound?
This section is consequently organized in the above order.

For a given non-degenerate, convex polygon $K$, for simplicity, denote by $n=|F|=|V|$ the number of vertices of $K$, and by $\phi_i$, $1\le i\le n$, the Wachspress
coordinate associated with the $i$th vertex of $K$. Define
$$ \Lambda_{\valpha} = \max_{\vx\in K}\sum_{i=1}^{n}\left|D^{\valpha}\phi_i(\vx)\right|.$$
To facilitate the experiments, we adopt the method in \cite{ChenWang2020,HuangWang2020} to generate random convex polygons 
by calculating the convex hull of $20$ random points in $[0, 1]\times[0, 1]$. The polygons can then be uniformly scaled to any size $h_K$ as needed.

In all experiments except for the FEM one in subsection \ref{subsec:FEM}, $D^{\valpha}\phi_i(\vx)$ is computed symbolically and extensively tested to ensure its high accuracy.

\begin{figure}[h]
	\centering
	\includegraphics[width=6cm]{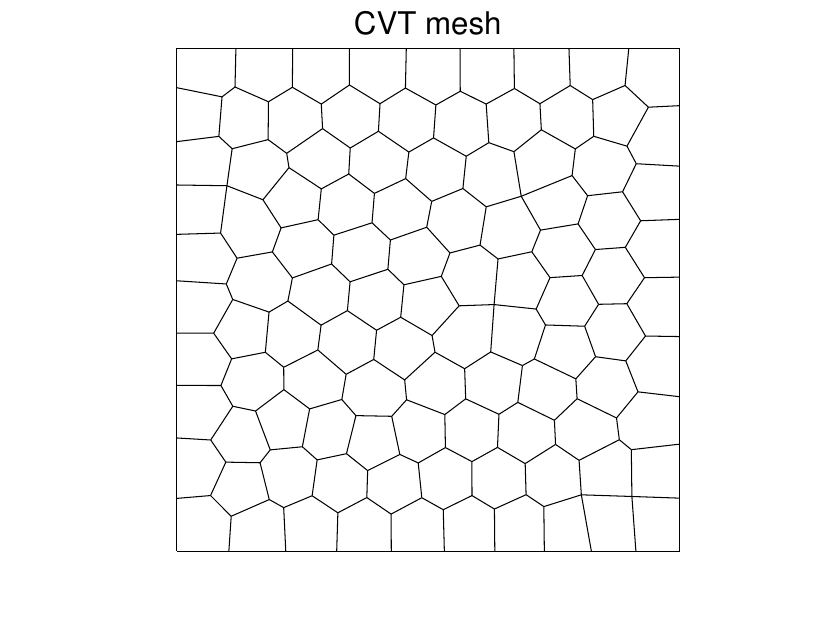}\;
	\includegraphics[width=6cm]{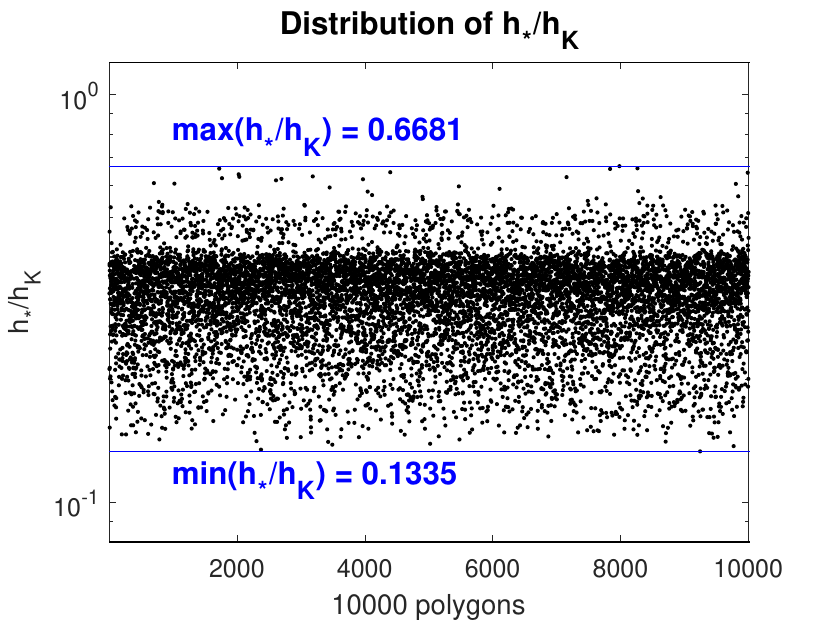}\;
	\includegraphics[width=6cm]{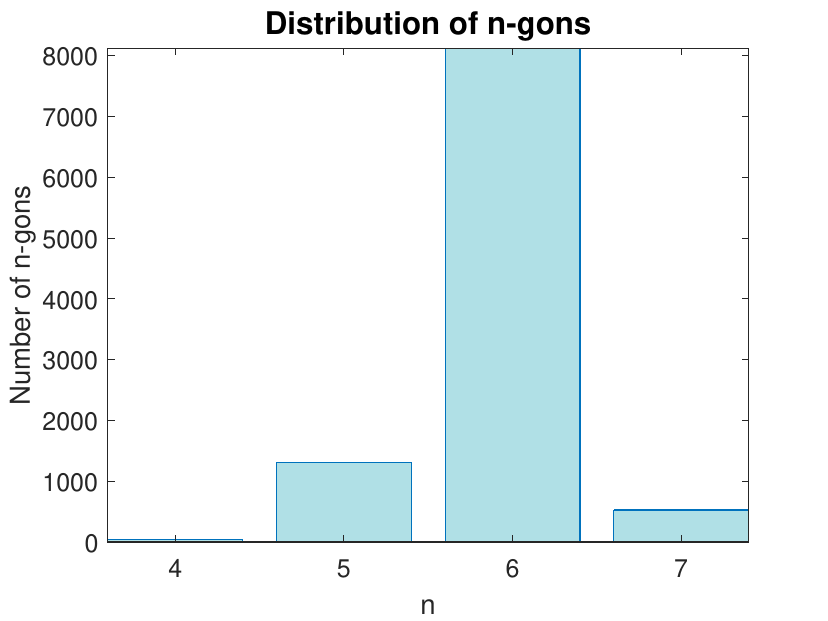}\;  
	\includegraphics[width=6cm]{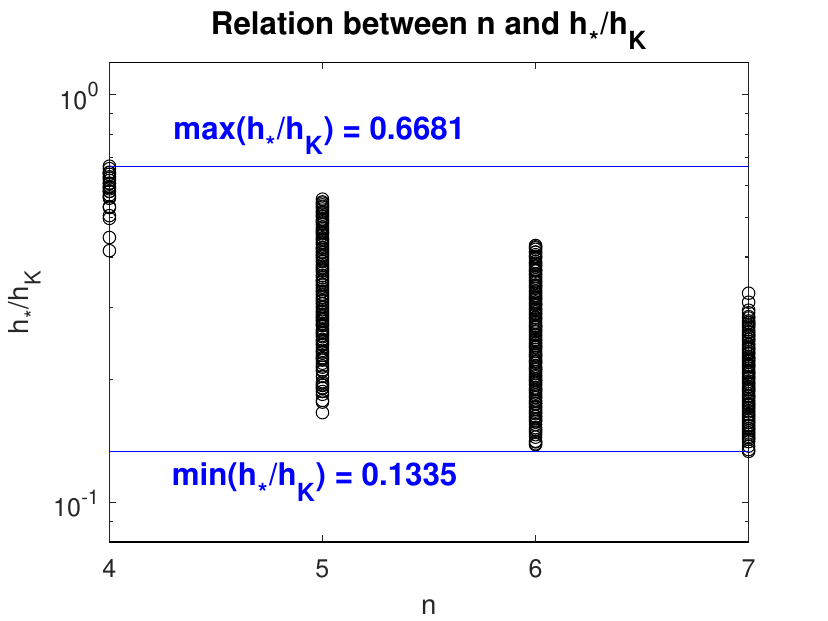}\;
	\caption{Statistics of polygons in a CVT mesh after eliminating ``short" edges. 
		(1) A CVT mesh after eliminating ``short" edges. (2)-(3) The distributions of $h_*/h_K$ and $n$-gons in a CVT mesh containing $10000$ polygons. 
		(4) $\frac{h_*}{h_K}$ vs. $n$ for the same CVT mesh.}\label{fig:CVT-polygon}
\end{figure}

\subsection{Can one easily obtain high quality polygonal meshes with $h_* = O(h_K)$? }
The answer is definitely yes. Consider the centroidal Voronoi tessellations (CVTs) \cite{CVT-Du-1999} (see Fig. \ref{fig:CVT-polygon}).
All polygons in a CVT mesh are non-degenerate and convex.
By definition, it is not hard to prove that a quasi-uniform CVT polygonal mesh must satisfy maximum interior angle condition \textbf{(H6)}
and the minimum interior angle condition \textbf{(H7)}. Then, according to Lemma \ref{lem:2D}, condition \textbf{(H1)} holds as long as condition \textbf{(H2)} holds,
i.e., $h_* = O(h_K)$ as long as there is no ``short" edge in the mesh. There is no built-in mechanism in CVT to prevent the existence of ``short" edges.
However, we can easily ``eliminate" the ``short" edges from a CVT, by simply shrink a ``short" edge into a vertex.

In practice, we first generate a CVT mesh with characteristic mesh size $h$ using the code {\tt polymesher} written by Talischi et al. \cite{polymesher}, 
and then check each edge in the mesh. If the edge has length less than $0.15h$, we simply remove one end point of the edge, so that the ``short" edge vanishes.
The result turns out to be satisfying, as one can see from Fig. \ref{fig:CVT-polygon} that $\frac{h_*}{h_K}$ is now controlled in a reasonable range of $[0.1335,\,0.6681]$.
Another interesting fact about CVT, which can be observed in Fig. \ref{fig:CVT-polygon}, is that it appears to consist of mostly hexagons.

As a comparison, we also examine $\frac{h_*}{h_K}$ and the distribution of $n$-gons of $10000$ random convex polygons. ``Short" edges are not eliminated this time. 
In a test run presented in Fig. \ref{fig:rand-polygon}, the majority of polygons has $\frac{h_*}{h_K} \in [0.0024,\, 0.6810]$, 
with only one polygon having $\frac{h_*}{h_K} = 6.3552e-04$. 
The distribution of $n$-gons appears to be  a normal distribution with respect to $n$. 

\begin{figure}[h]
  \centering
  \includegraphics[width=6cm]{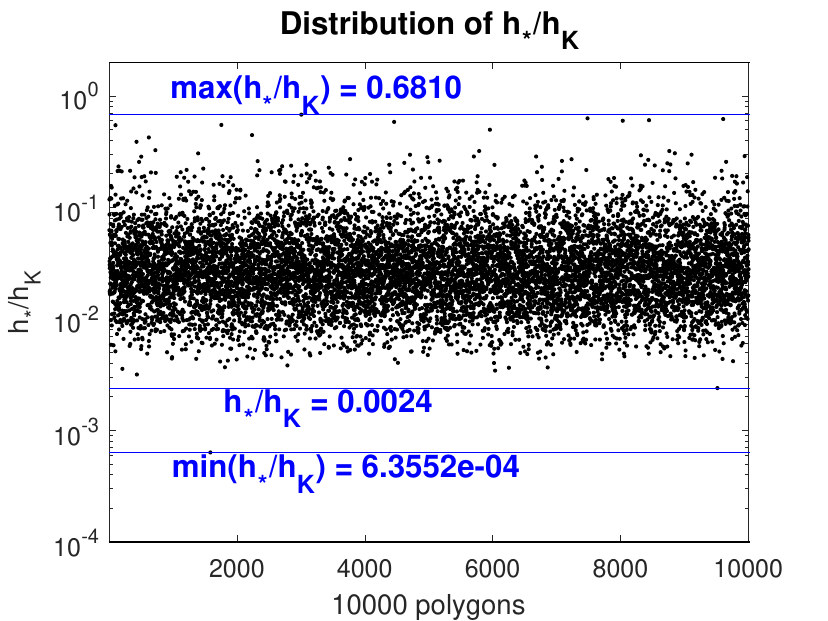}\;
  \includegraphics[width=6cm]{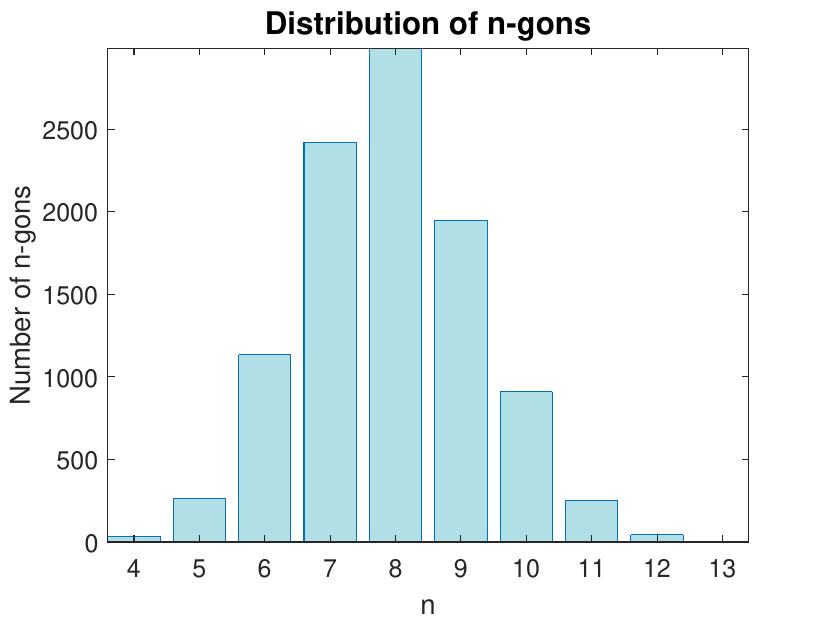}\;  
  \includegraphics[width=6cm]{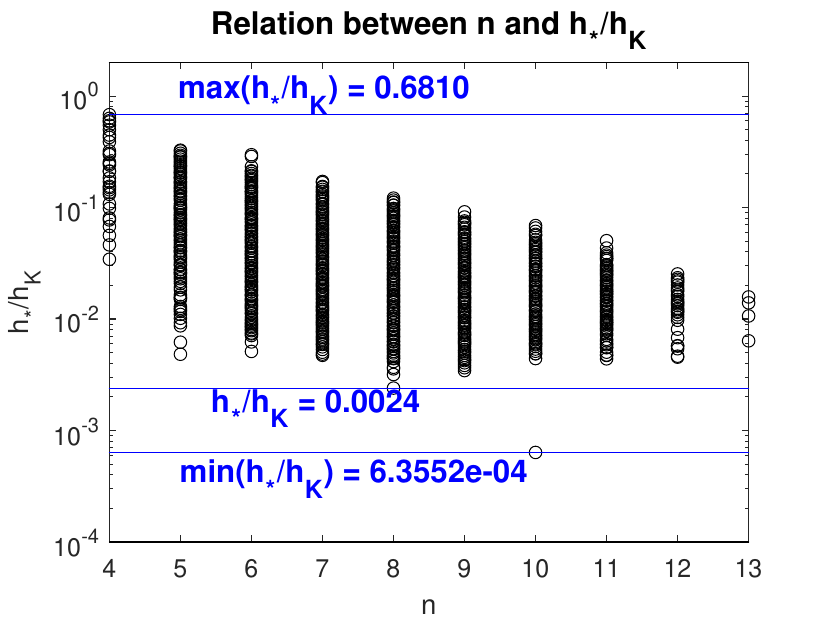}\;
  \caption{Statistics of random convex polygons. (1)-(2) The distributions of $h_*/h_K$ and $n$-gons in $10000$ random convex polygons.
   (3) $\frac{h_*}{h_K}$ vs. $n$ for the same $10000$ random convex polygons.}\label{fig:rand-polygon}
\end{figure}

In Figs. \ref{fig:CVT-polygon}-\ref{fig:rand-polygon}, we also report $\frac{h_*}{h_K}$ vs. $n$. In both cases, the upper bound of $\frac{h_*}{h_K}$ appears to
decay exponentially as $n$ grows.

From Figs. \ref{fig:CVT-polygon}-\ref{fig:rand-polygon}, we draw the conclusion that CVT meshes, after eliminating ``short" edges, 
are high quality polygonal meshes with $h_* = O(h_K)$.
An alternative strategy to remove ``short" edges from a CVT by using the duality between Voronoi and Delaunay meshes
can be found in \cite{GBCFEM-Sieger2010}.

\subsection{When $h_* = O(h_K)$, do numerical results agree well with Theorem \ref{cor:bound}}
To examine this, we generate $100$ random convex polygons with size $h_K$ randomly distributed in $[10^{-5},\, 1]$, and $\frac{h_*}{h_K}\ge 0.01$.
This is achieved by generating random convex polygons one by one, and throwing away polygons with $\frac{h_*}{h_K}< 0.01$ until we have $100$ qualified ones left.
These $100$ polygons can roughly be considered as having $h_* = O(h_K)$. We also point out that all polygons in the CVT mesh shown in Fig. \ref{fig:CVT-polygon}
satisfy $\frac{h_*}{h_K}\ge 0.01$.

We then compute $\Lambda_{\alpha}$, for $|\valpha| = 1,2,3$, on these $100$ random polygons. The results are reported in Figs. \ref{fig:example1-alpha=1}-\ref{fig:example1-alpha=3}. 
In each graph, it can be observed that the values of $\Lambda_{\alpha}$ are clustered around a line $\Lambda_{\alpha} = Ch_K^{-|\valpha|}$, 
which agrees well with the theoretical prediction of Theorem \ref{cor:bound}.

\begin{figure}[h]
    \begin{center}
          \includegraphics[width=6cm]{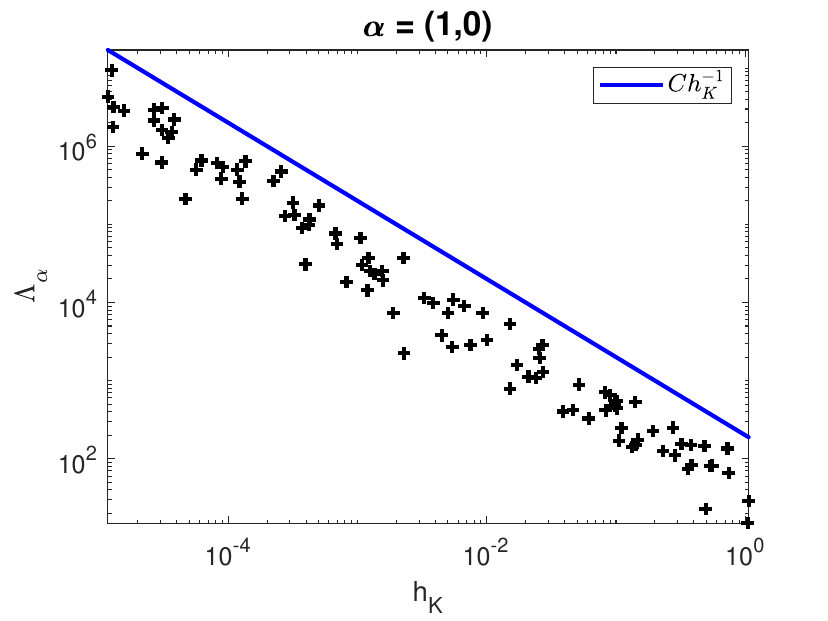}\;
          \includegraphics[width=6cm]{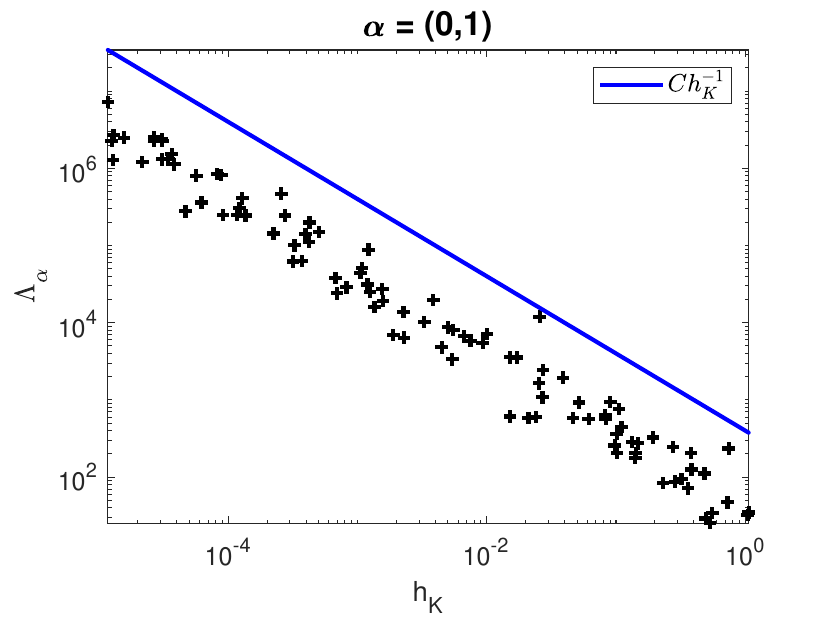}\;
          \caption{Relation of $\Lambda_{\valpha}$ and $h_K$ with $|\valpha| = 1$ on random polygons with $\frac{h_*}{h_K} \geq 0.01$. }\label{fig:example1-alpha=1}
    \end{center}
\end{figure}

\begin{figure}[h]
    \begin{center}
          \includegraphics[width=6cm]{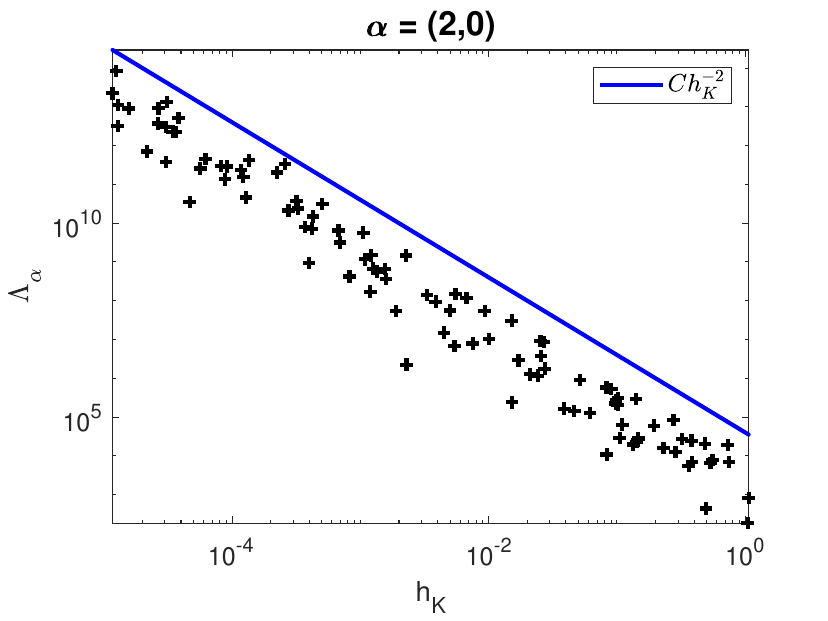}\;
          \includegraphics[width=6cm]{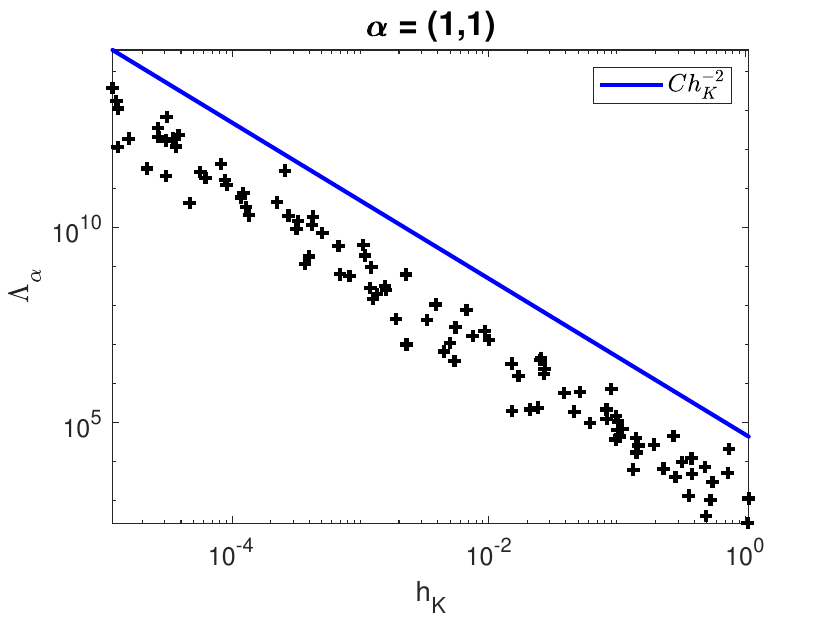}\;
          \includegraphics[width=6cm]{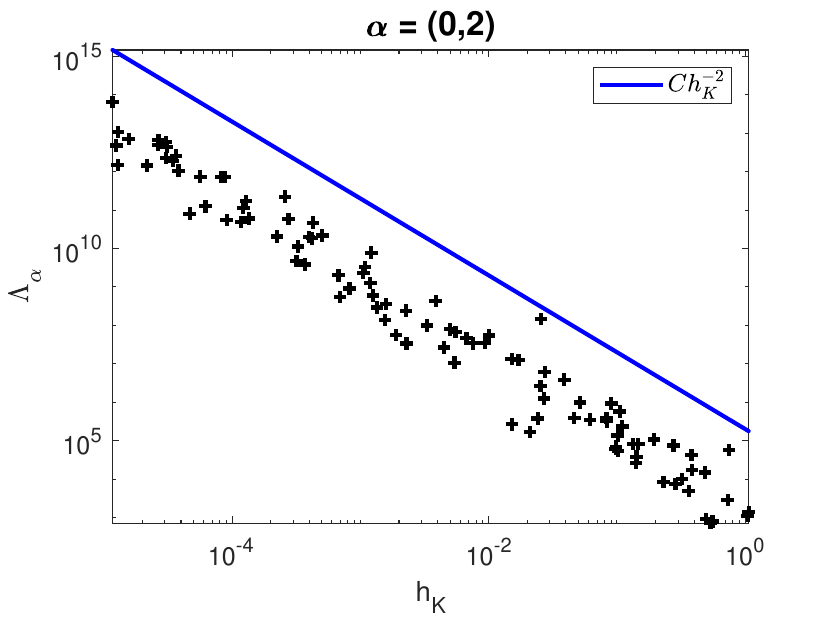}\;
          \caption{Relation of $\Lambda_{\valpha}$ and $h_K$ with $|\valpha| = 2$ on random polygons with $\frac{h_*}{h_K} \geq 0.01$. }\label{fig:example1-alpha=2}
    \end{center}
\end{figure}

\begin{figure}[h]
    \begin{center}
          \includegraphics[width=6cm]{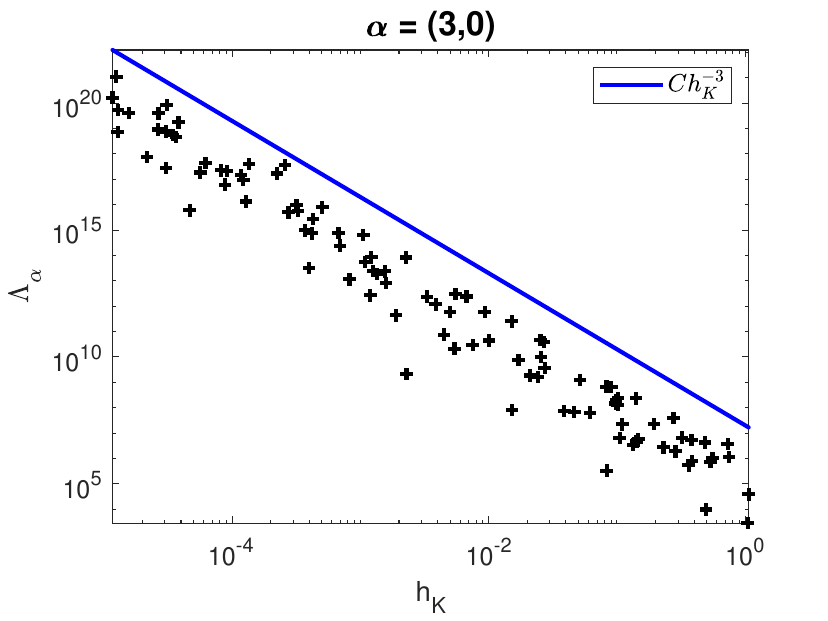}\;
          \includegraphics[width=6cm]{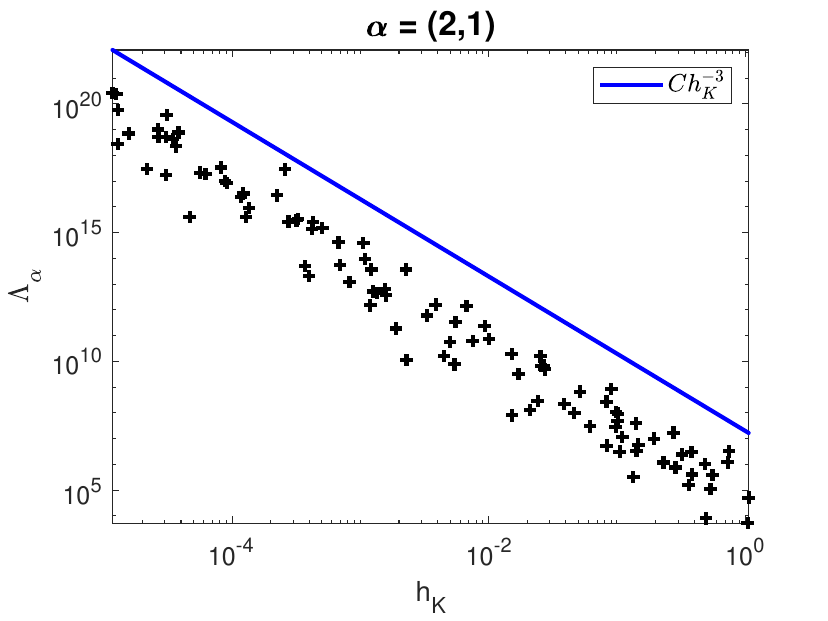}\;
          \includegraphics[width=6cm]{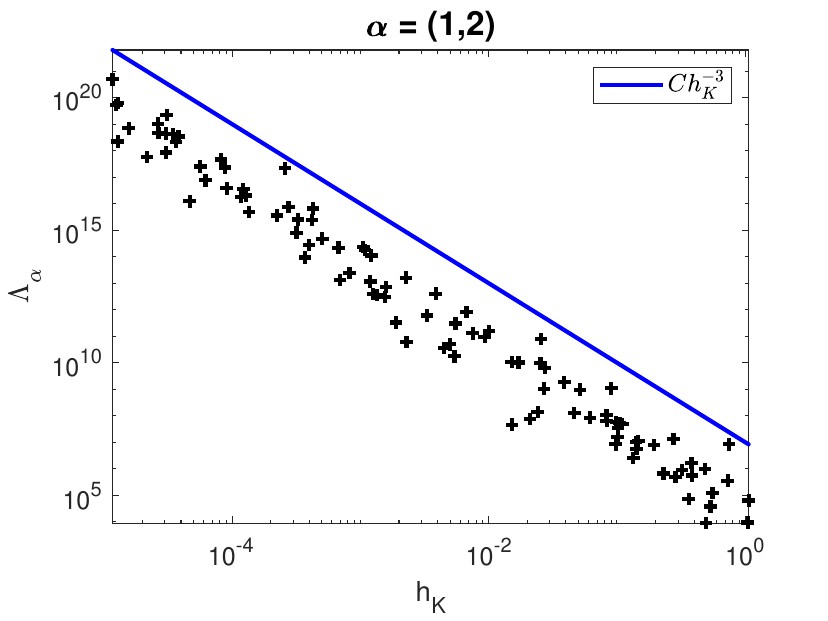}\;
          \includegraphics[width=6cm]{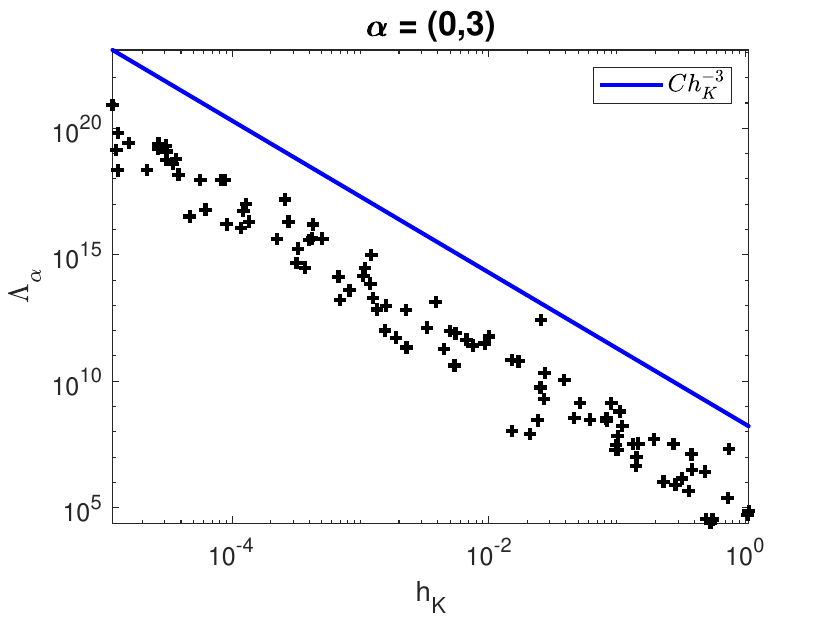}\;
          \caption{Relation of $\Lambda_{\valpha}$ and $h_K$ with $|\valpha| = 3$ on random polygons with $\frac{h_*}{h_K} \geq 0.01$. }\label{fig:example1-alpha=3}
    \end{center}
\end{figure}

By examining the width of the strips containing data points in Figs. \ref{fig:example1-alpha=1}-\ref{fig:example1-alpha=3}, we notice that for fixed $\valpha$ and $h_K$, the values of $\Lambda_{\valpha}$ may still be distributed in a large range
with the ratio between upper and lower bounds up to $10^4$. Indeed, this reflects the difference of the constant $C$ in $\Lambda_{\alpha} = Ch_K^{-|\valpha|}$.
According to Lemma \ref{bounds-of-phi_v} and Theorem \ref{cor:bound}, constant $C$ depends on $|F|$ and $|V|$. In $2$-dimension, one has $|F|=|V|$.
We further design a numerical experiment to test the dependency of $\Lambda_{\valpha}$ on $\nof{V}$.
This time, $100$ random convex polygons with $h_K = O(1)$, $\frac{h_*}{h_K}\ge 0.01$, and $|V|\le 10$ are generated.
We take $|V|\le 10$ because, according to Fig. \ref{fig:CVT-polygon}, polygons with higher number of vertices seldom appear in a practical mesh.
Dependency relation of $\Lambda_{\valpha}$ on $\nof{V}$ are reported in Figs. \ref{fig:nv-Lambda-alpha-eta001-1}-\ref{fig:nv-Lambda-alpha-eta001-3},
for $|\valpha| = 1,2,3$. It appears that $\Lambda_{\valpha}$ increases exponentially as $|V|$ grows. 
However, since $|V|$ stays between $[3,\, 10]$, the range of $\Lambda_{\valpha}$ remains controlled for each given $\valpha$.

\begin{figure}[h]
  \centering
  \includegraphics[width=6cm]{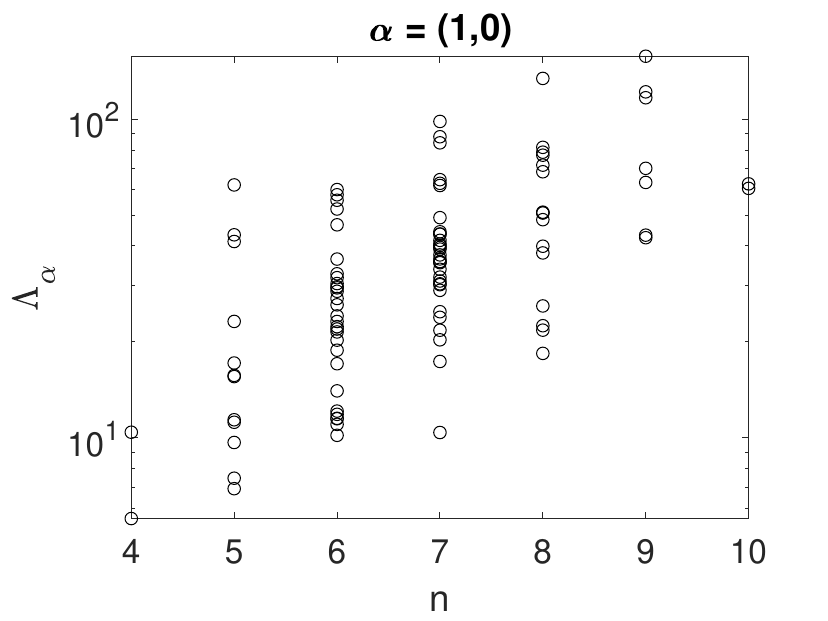}\;
  \includegraphics[width=6cm]{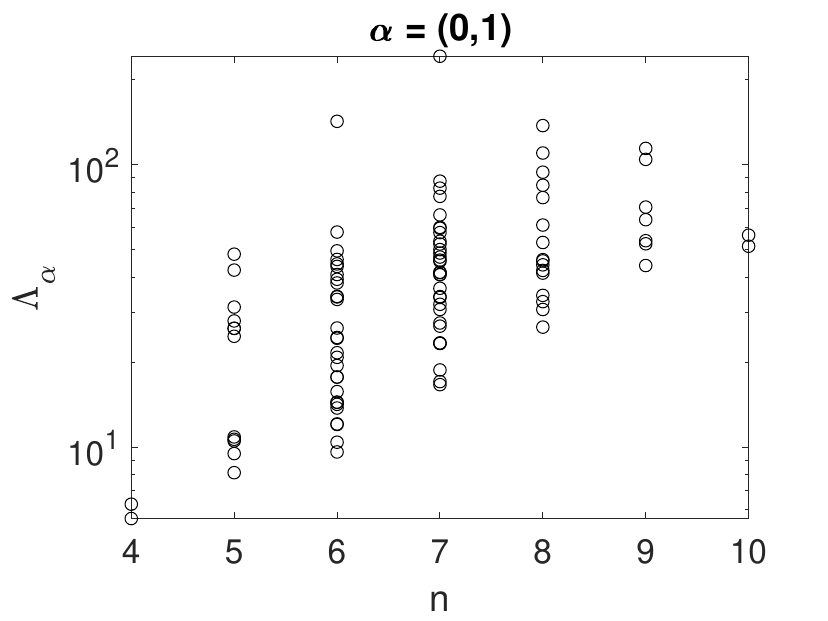}\;
  \caption{Relation of $\Lambda_{\valpha}$ and $\nof{V} = n $ with $|\valpha|= 1$ on random polygons with $\frac{h_*}{h_K} \geq 0.01$ and $h_K=O(1)$.}\label{fig:nv-Lambda-alpha-eta001-1}
\end{figure}

\begin{figure}[h]
  \centering
  \includegraphics[width=6cm]{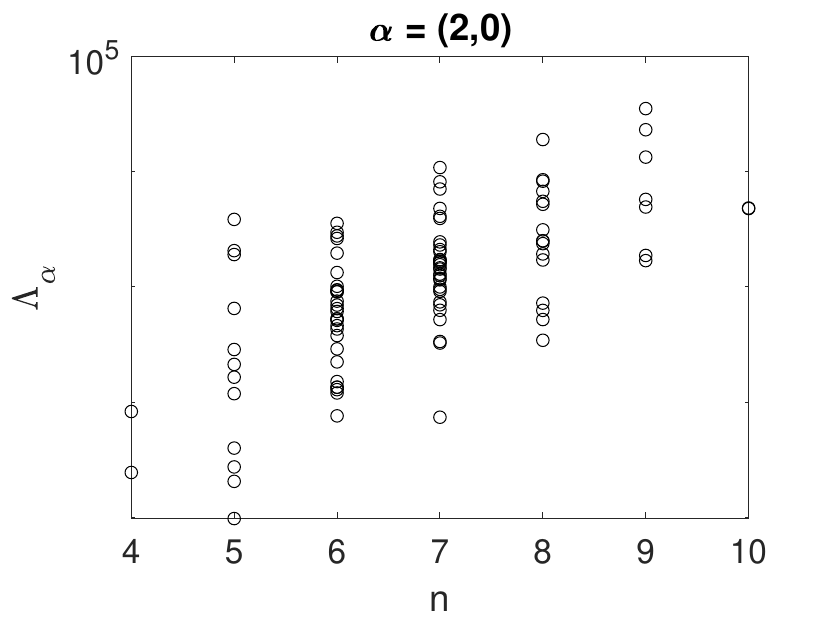}\;
  \includegraphics[width=6cm]{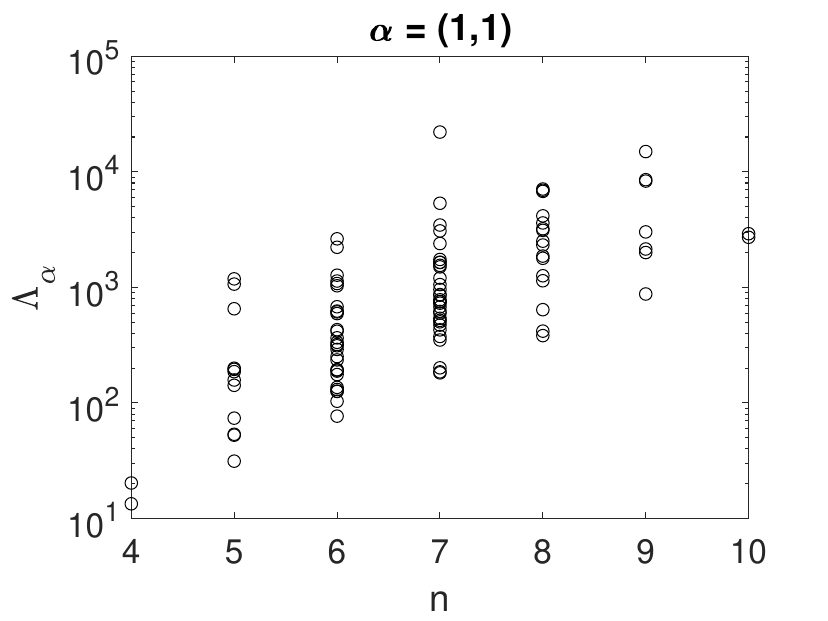}\;
  \includegraphics[width=6cm]{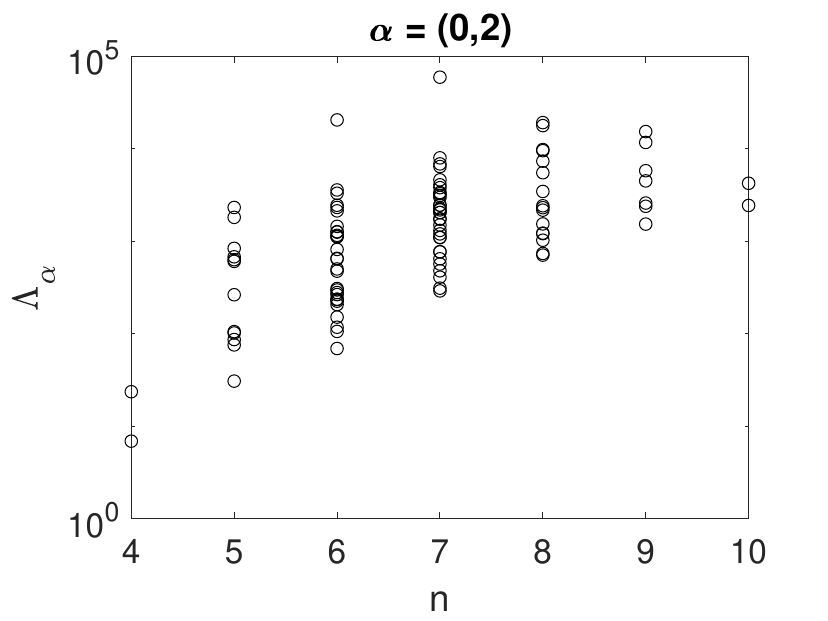}\;
  \caption{Relation of $\Lambda_{\valpha}$ and $\nof{V} = n $ with $|\valpha|= 2$ on random polygons with $\frac{h_*}{h_K} \geq 0.01$ and $h_K=O(1)$.}\label{fig:nv-Lambda-alpha-eta001-2}
\end{figure}

\begin{figure}[h]
  \centering
  \includegraphics[width=6cm]{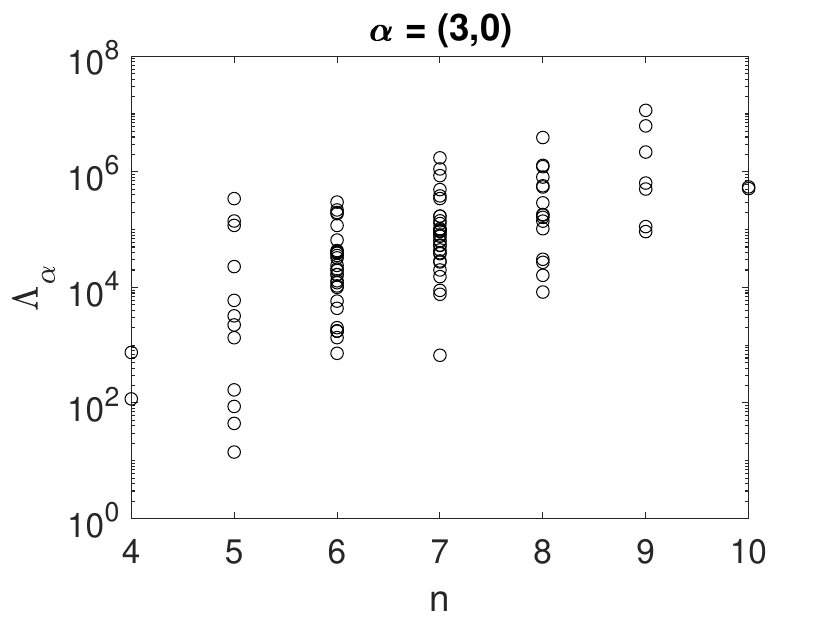}\;
  \includegraphics[width=6cm]{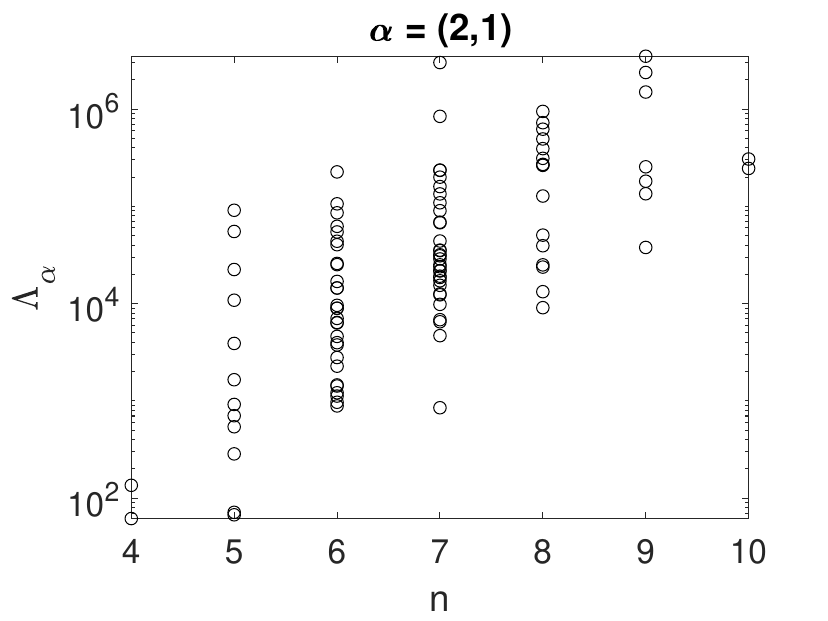}\;
  \includegraphics[width=6cm]{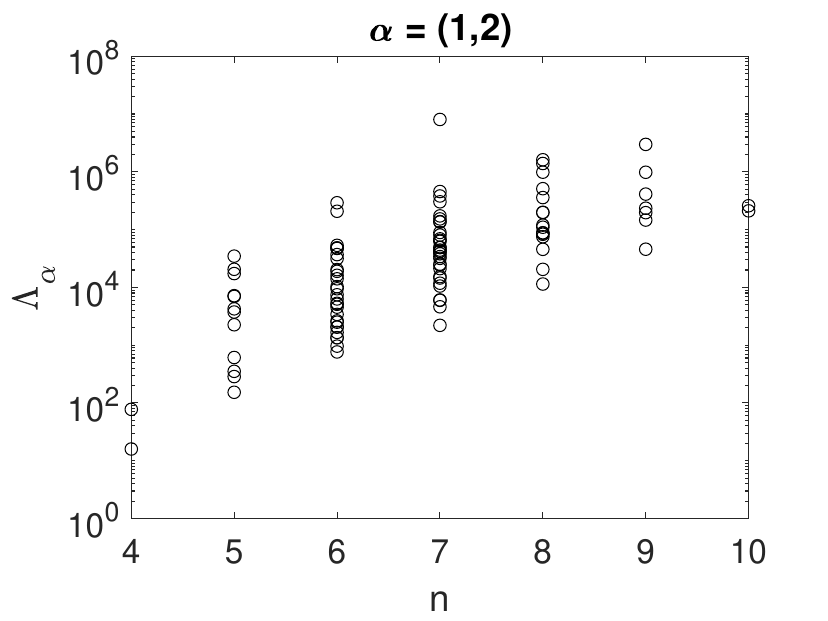}\;
  \includegraphics[width=6cm]{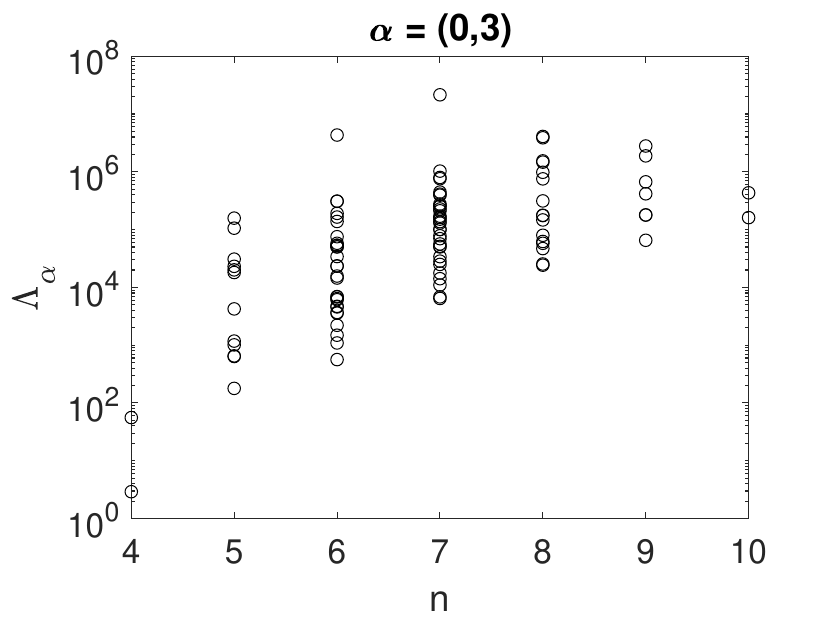}\;
  \caption{Relation of $\Lambda_{\valpha}$ and $\nof{V} = n $ with $|\valpha|= 3$ on random polygons with $\frac{h_*}{h_K} \geq 0.01$ and $h_K=O(1)$.}\label{fig:nv-Lambda-alpha-eta001-3}
\end{figure}


\subsection{What happens when $h_* \ll h_K$? }
We then investigate the influence of $h_*$ on $\Lambda_{\alpha}$. 
To this end, we consider three polygons: $K_1$ with vertices
$$ \vv_1 = (0,0),\; \vv_2 = (a,0),\; \vv_3 = (0.8,0.4), \;
   \vv_4 = (0.7,0.7),\; \vv_5 = (0.1,1),\;$$
$$ \vv_6 = (-0.2,0.5),$$
and $K_2$ with vertices
$$ 
\vv_1 = (0,0),\, \vv_2 = (0,a),\, \vv_3 = (-0.6,0.7),\, \vv_4 = (-1,0.4),\, \vv_5 = (-0.9,0.15),\,
$$
$$ \vv_6 = (-0.24,-0.2), $$
and $K_3$ with vertices
$$ 
\vv_1 = (0,0),\, \vv_2 = (a,a),\, \vv_3 = (0.1,0.7),\, \vv_4 = (-0.2,0.8),\, \vv_5 = (-0.5,0.5),\,
$$
$$ \vv_6 = (-0.5,-0.1), $$
where $0<a<1$.
Illustrations of $K_1$, $K_2$ and $K_3$ are given in Fig. \ref{fig:polygon-K}.
When $a \rightarrow 0$, polygon $K_1$, $K_2$ and $K_3$ has $h_*\rightarrow 0$.
We consider these three polygons as they represent three different cases where $h_*\rightarrow 0$ occurs in the 
$x$-direction, the $y$-direction and $y = x$ direction, respectively. 
It is interesting to examine whether they affect only the partial derivatives with respect to $x$, to $y$, or both.

\begin{figure}[h]
  \centering
  \includegraphics[width=4cm]{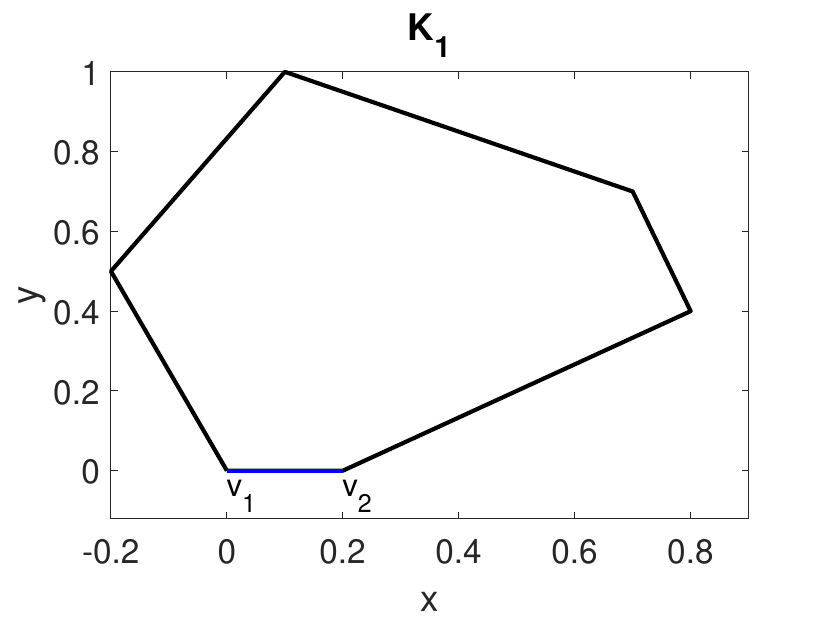}\;
  \includegraphics[width=4cm]{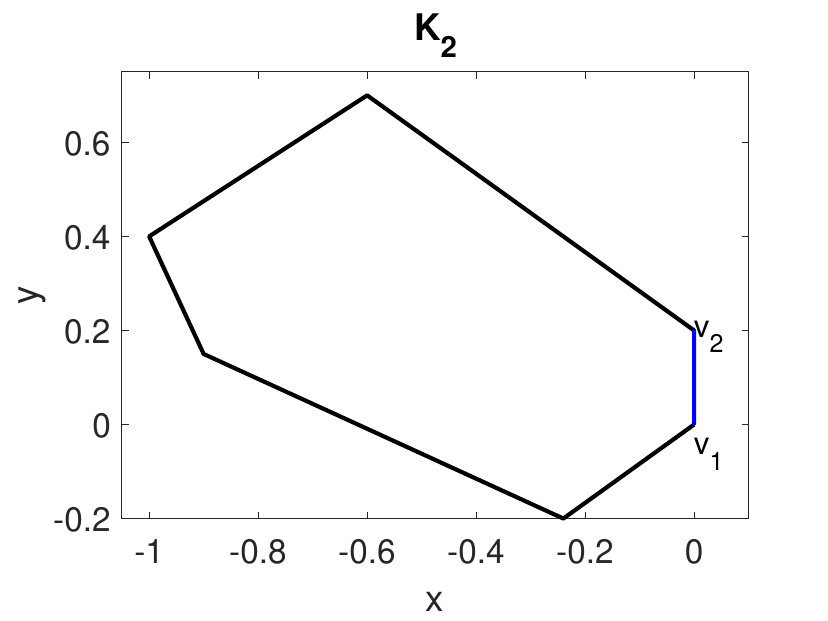}\;  \includegraphics[width=4cm]{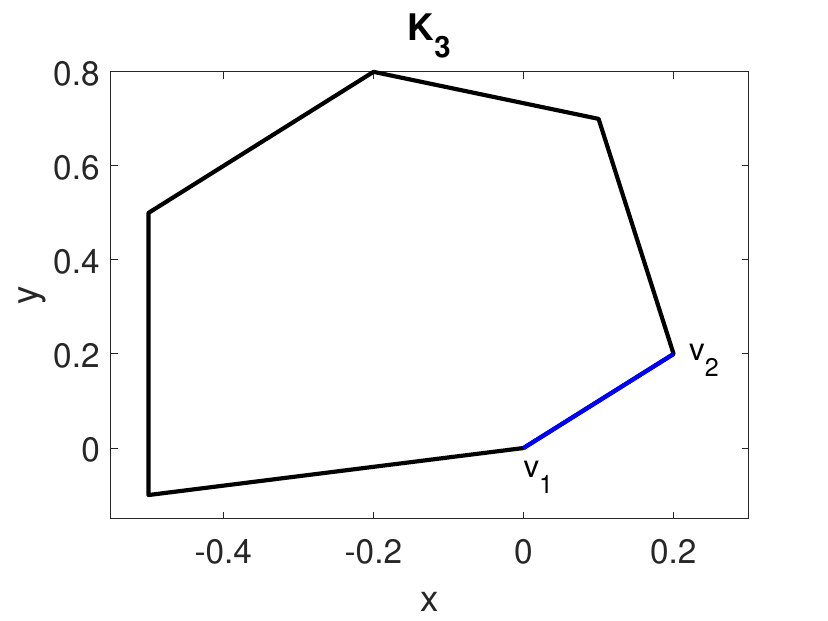}\;
  \caption{$K_1$, $K_2$ and $K_3$.}\label{fig:polygon-K}
\end{figure}

In Figs. \ref{fig:Lambda-alpha-K1}-\ref{fig:Lambda-alpha-K3}, we report $\Lambda_{\valpha}$ vs. $h_*$ in $K_1$, $K_2$ and $K_3$.
Because $h_K=O(1)$, in all graphs we observe that $\Lambda_{\valpha}\le Ch_*^{|\valpha|}$,
some having a slower increasing rate than $Ch_*^{|\valpha|}$.
The numerical results immediately tell us that the theoretical result in Lemma \ref{bounds-of-phi_v} is far from sharp.
Based on the numerical results, we suspect that a sharp upper bound for the case $h_K=O(1)$ is $Ch_*^{|\valpha|}$,
which awaits future research.

\begin{figure}[h]
  \centering
  \includegraphics[width=6cm]{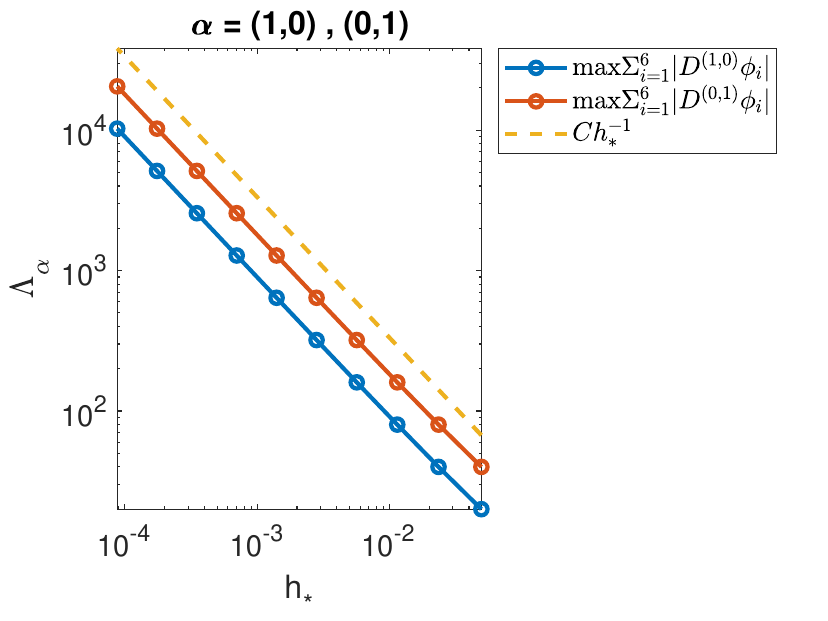}\;
  \includegraphics[width=6cm]{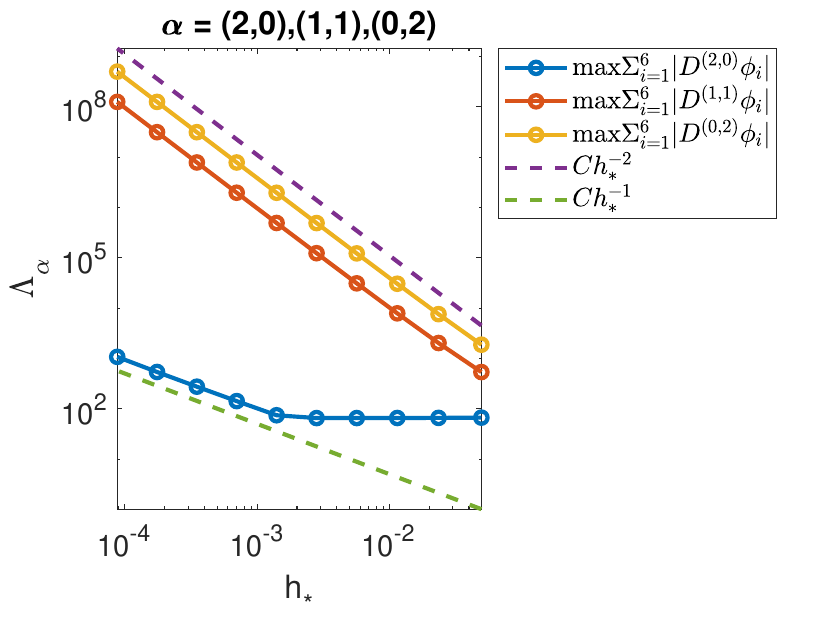}\;
  \includegraphics[width=6cm]{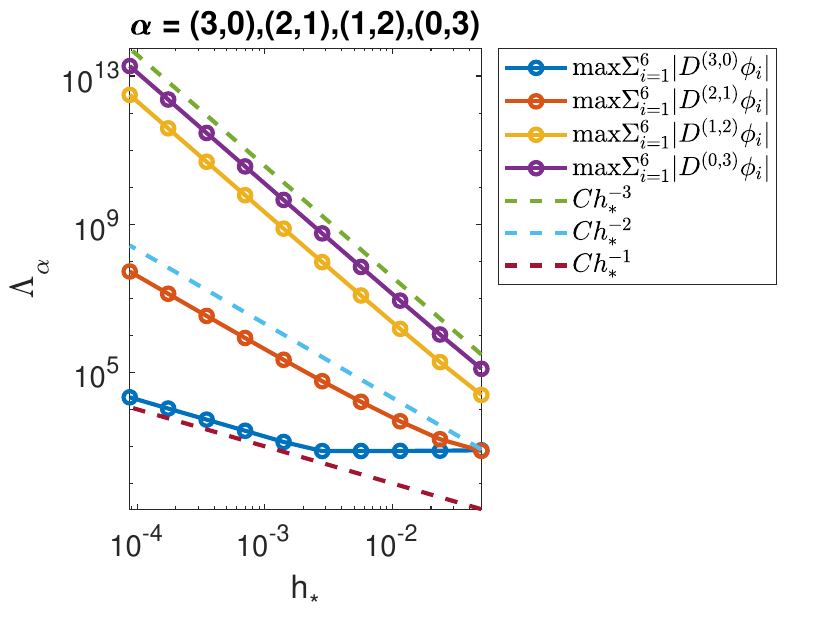}\;
  \caption{From the first to the last panel, relation between $\Lambda_{\valpha}$ and $h_*$ with $|\valpha|=1,2,3$ on $K_1$.}\label{fig:Lambda-alpha-K1}
\end{figure}

\begin{figure}[h]
	\centering
	\includegraphics[width=6cm]{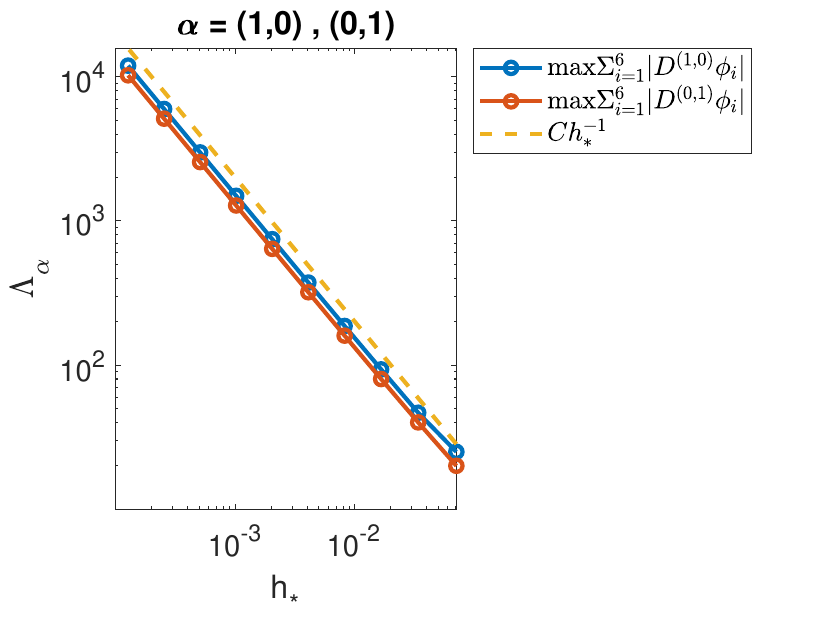}\;
	\includegraphics[width=6cm]{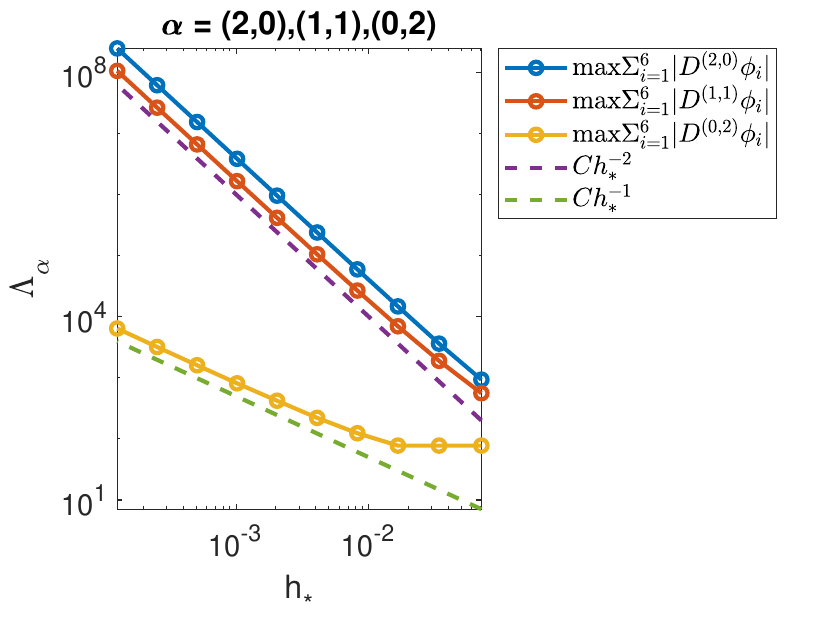}\;
	\includegraphics[width=6cm]{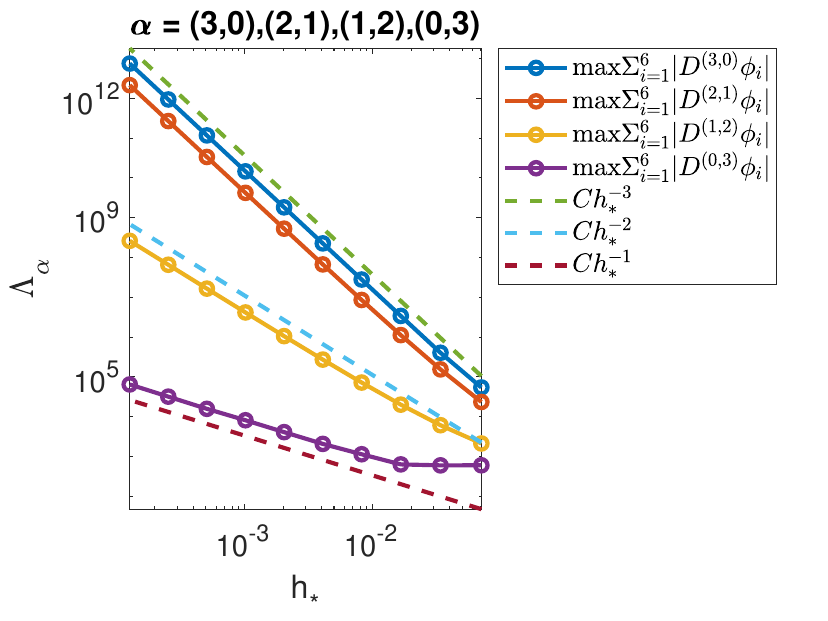}\;
	\caption{From the first to the last panel, relation between $\Lambda_{\valpha}$ and $h_*$ with $|\valpha|=1,2,3$ on $K_2$.}\label{fig:Lambda-alpha-K2}
\end{figure}

\begin{figure}[h]
	\centering
	\includegraphics[width=6cm]{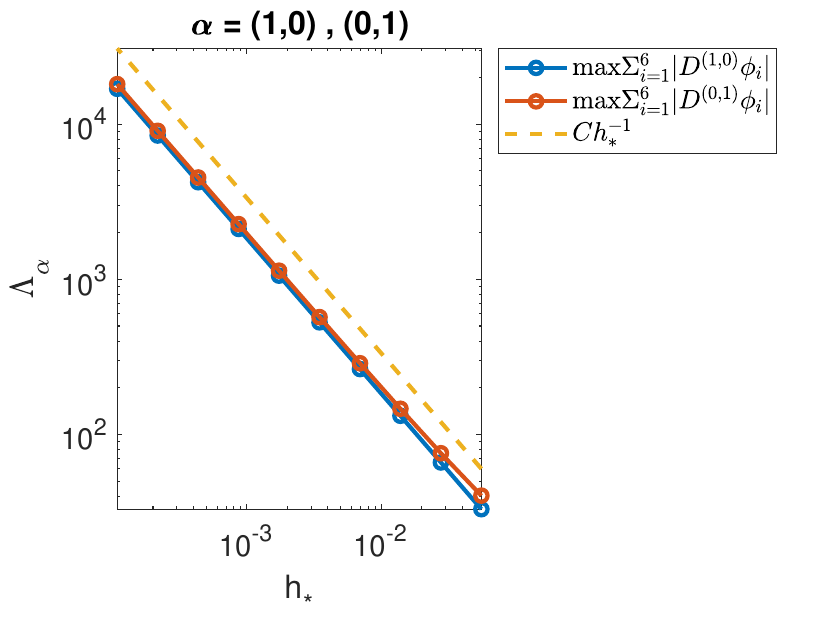}\;
	\includegraphics[width=6cm]{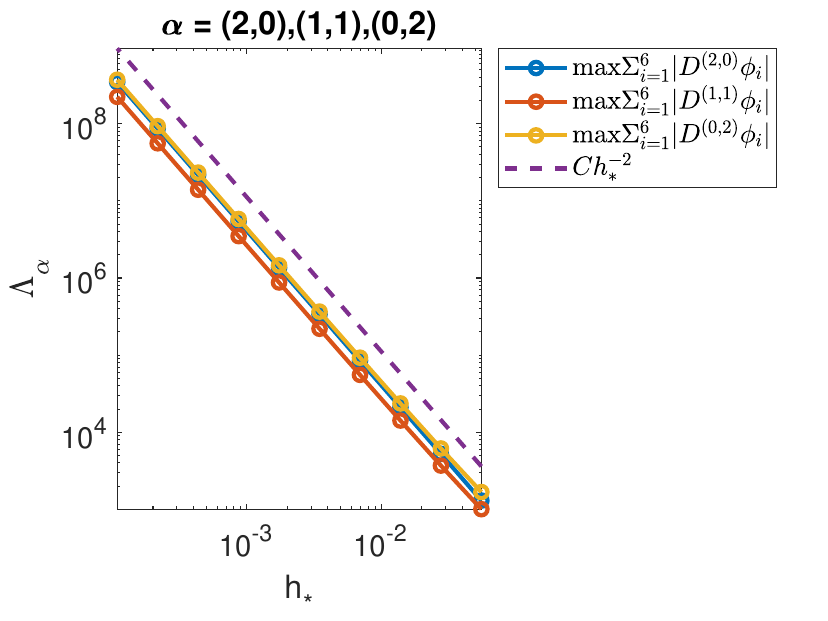}\;
	\includegraphics[width=6cm]{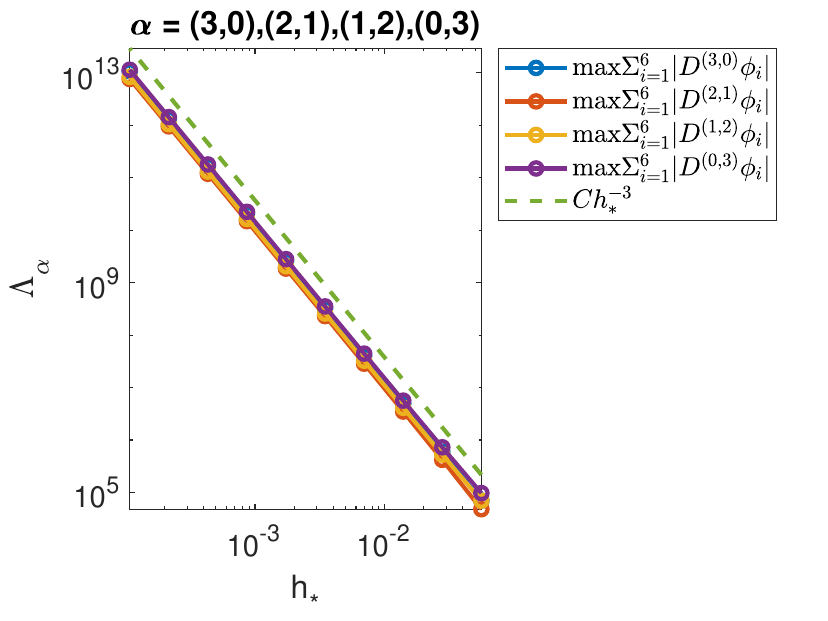}\;
	\caption{From the first to the last panel, relation between $\Lambda_{\valpha}$ and $h_*$ with $|\valpha|=1,2,3$ on $K_3$.}\label{fig:Lambda-alpha-K3}
\end{figure}

Comparing Figs. \ref{fig:Lambda-alpha-K1}-\ref{fig:Lambda-alpha-K3}, one easily see that $\Lambda_{\valpha}$ for $\valpha=(2,0),\,(2,1),\,(3,0)$ on $K_1$,
and for $\valpha=(0,2),\,(1,2),\,(0,3)$ on $K_2$ do no reach the order $Ch_*^{|\valpha|}$ as others do.
We suspect that these derivatives are affected by the direction of the short edges.
To further explore this,
we plot $\max_{\vx\in K} \left|D^{\valpha}\phi_i(\vx)\right|$ for individual $\vv_i$s on polygons $K_1$-$K_3$.
The results are surprisingly rich for various combinations of $\valpha$ and $i$.

\begin{enumerate}
\item We first check $K_3$, which has a slant short edge. In Fig. \ref{fig:single-dphi-K3}, we plot $\max_{\vx\in K} \left|D^{\valpha}\phi_i(\vx)\right|$ on $K_3$,
for $\valpha = (1,0),\,(2,0),\,(3,0)$, respectively.
From the graphs, we observe that $\max_{\vx\in K}|D^{\valpha}\phi_i|$ reaches the asymptotic order $Ch_*^{|\valpha|}$ only when $\vv_i$ is incident to the short edge.
Otherwise, various downgrading in the asymptotic orders can be observed.

Graphs for other values of $\valpha$ on $K_3$ are omitted, since the results for $\valpha$s with the same length behave exactly the same.
That is, the graph on $K_3$ with $|\valpha|=1,\,2,\,3$ looks the same as the graph for $\valpha = (1,0),\,(2,0),\,(3,0)$ in Fig. \ref{fig:single-dphi-K3}, respectively.

\item We then check $K_1$, which has a horizontal short edge. It turns out that except for $\valpha=(2,0),\,(2,1),\,(3,0)$, the results for all other values of $\valpha$
behave the same as in the case of $K_3$. 
In Fig. \ref{fig:single-dphi-K1}, we plot $\max_{\vx\in K} \left|D^{\valpha}\phi_i(\vx)\right|$ on $K_1$, for $\valpha = (2,0),\,(3,0)$.
The left panels show $\max_{\vx\in K}|D^{\valpha}\phi_i|$ for $\vv_i$ incident to the short edge, i.e., $i=1,\,2$,
while the right panels show $\max_{\vx\in K}|D^{\valpha}\phi_i|$ for $\vv_i$ non-incident to the short edge, i.e., $i=3,\, 4,\, 5, \, 6$. 
In both figures, values for $\phi_1$ and $\phi_2$ are highly identical so that the blue lines for $\phi_1$ are hiding behind the brown lines for $\phi_2$.
For $\vv_1$ and $\vv_2$ that are incident to the short edge, the values appear to have asymptotic order $Ch_*^{-1}$, regardless of $\valpha$. 
For vertices that are non-incident to the short edge, the values appear to be independent of $h_*$. 
In Fig. \ref{fig:single-dphi-K1-21}, we plot $\max_{\vx\in K} \left|D^{\valpha}\phi_i(\vx)\right|$ on $K_1$, for $\valpha = (2,1)$.

\item In Fig. \ref{fig:single-dphi-K2}, we plot $\max_{\vx\in K} \left|D^{\valpha}\phi_i(\vx)\right|$ on $K_2$, which has a vertical short edge, for $\valpha = (0,2),\,(0,3)$.
The results are similar to Fig. \ref{fig:single-dphi-K1}.
In Fig. \ref{fig:single-dphi-K2-12}, we plot $\max_{\vx\in K} \left|D^{\valpha}\phi_i(\vx)\right|$ on $K_2$, for $\valpha = (1,2)$.
The results are similar to Fig. \ref{fig:single-dphi-K1-21}.
Again, results for other values of $\valpha$ are not plotted as they behave the same to Fig. \ref{fig:single-dphi-K3}.

\end{enumerate}

\begin{figure}[h]
	\centering
	\includegraphics[width=6cm]{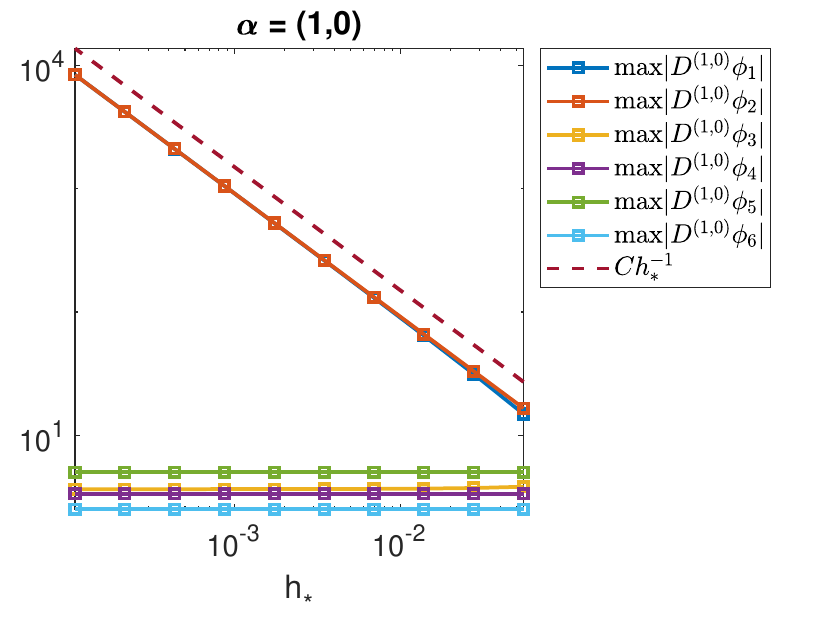}\;
	\includegraphics[width=6cm]{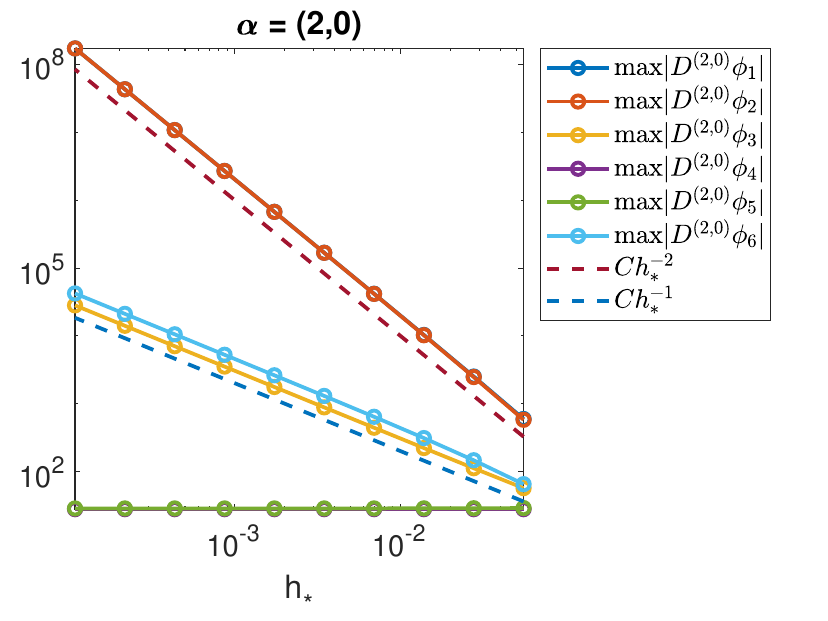}\;\\
	\includegraphics[width=6cm]{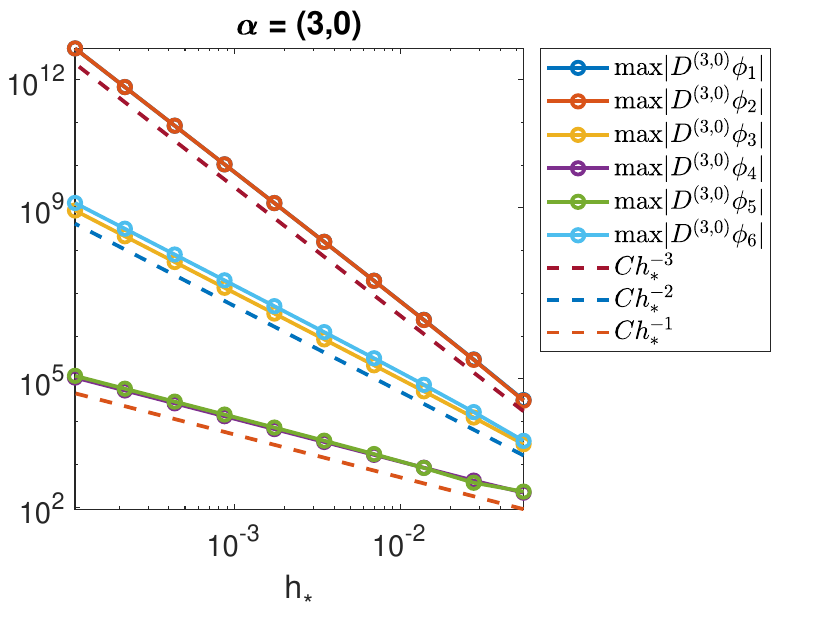}
	\caption{Relation between $\max_{\vx\in K}|D^{\valpha}\phi_i(\vx)|$ and $h_*$ for $\valpha=(1,0),\,(2,0),\,(3,0)$ on $K_3$.}\label{fig:single-dphi-K3}
\end{figure}

%

\begin{figure}[h]
	\centering
	\includegraphics[width=6cm]{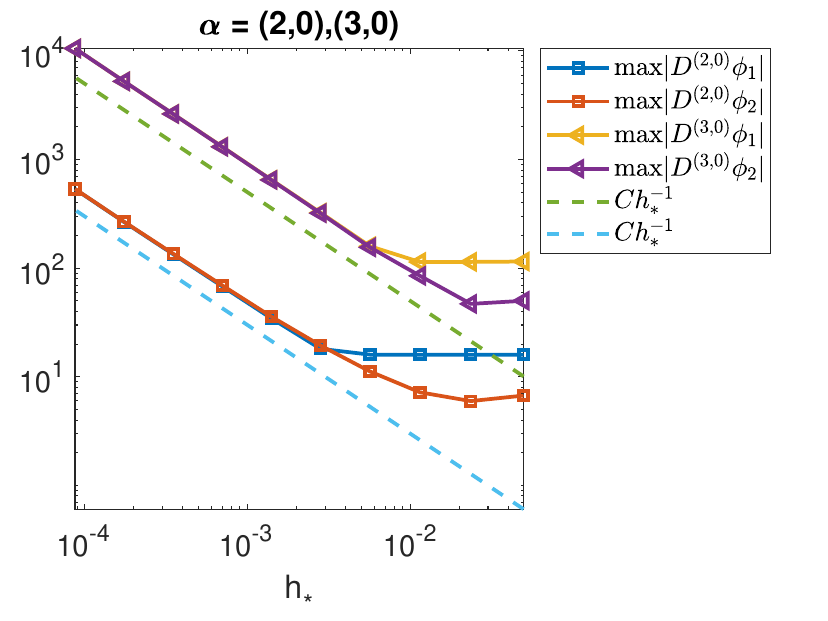}\;
	\includegraphics[width=6cm]{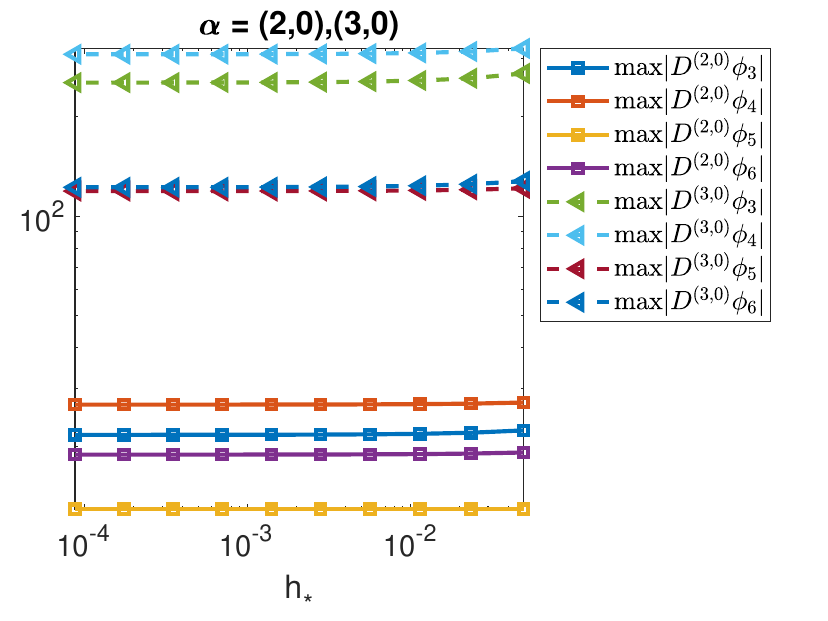}\;
	\caption{Relation between $\max_{\vx\in K}|D^{\valpha}\phi_i(\vx)|$ and $h_*$ for $\valpha=(2,0)$ and $(3,0)$ on $K_1$: left panel for $i= 1,\,2$, right panel for $i= 3,\,4,\,5,\,6$.}\label{fig:single-dphi-K1}
\end{figure}

\begin{figure}[h]
	\centering
	\includegraphics[width=6cm]{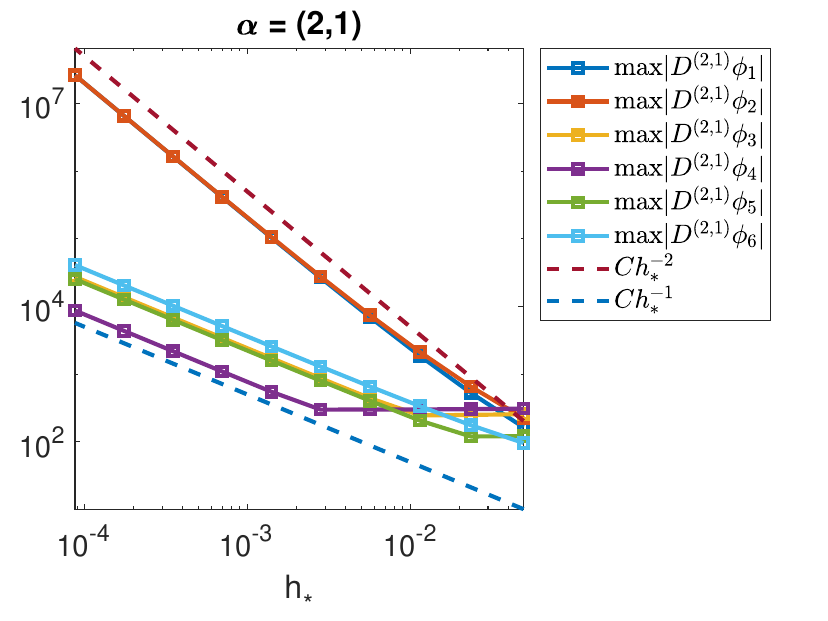}
	\caption{Relation between $\max_{\vx\in K}|D^{\valpha}\phi_i(\vx)|$ and $h_*$ for $\valpha=(2,1)$ on $K_1$.}\label{fig:single-dphi-K1-21}
\end{figure}

\begin{figure}[h]
	\centering
	\includegraphics[width=6cm]{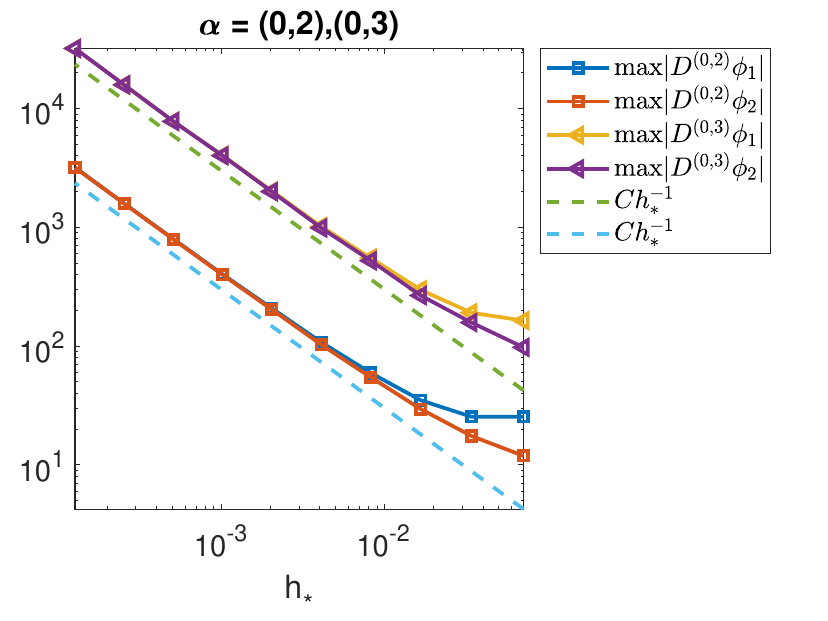}\;
	\includegraphics[width=6cm]{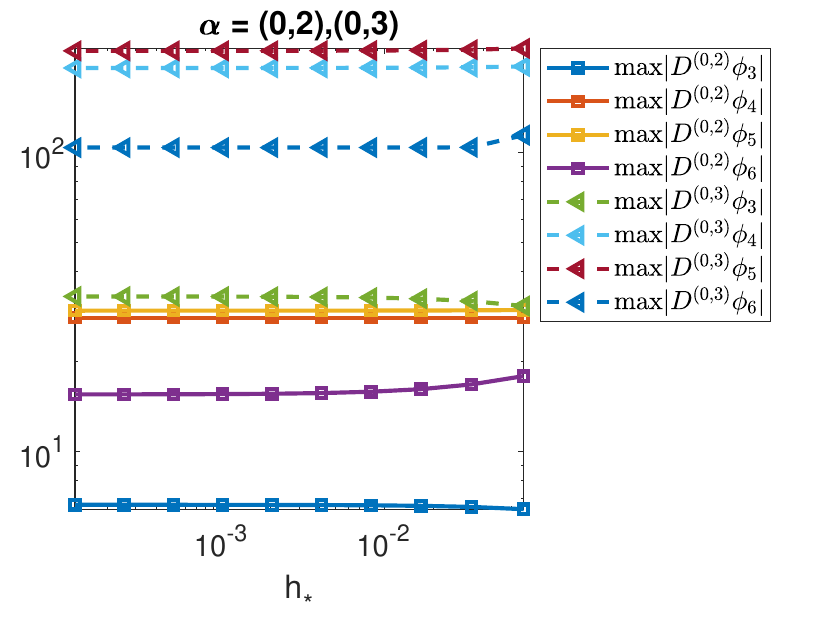}\;
	\caption{Relation between $\max_{\vx\in K}|D^{\valpha}\phi_i(\vx)|$ and $h_*$ for $\valpha=(0,2)$ and $(0,3)$ on $K_2$: left panel for $i= 1,\,2$, right panel for $i= 3,\,4,\,5,\,6$.}\label{fig:single-dphi-K2}
\end{figure}

\begin{figure}[h]
	\centering
	\includegraphics[width=6cm]{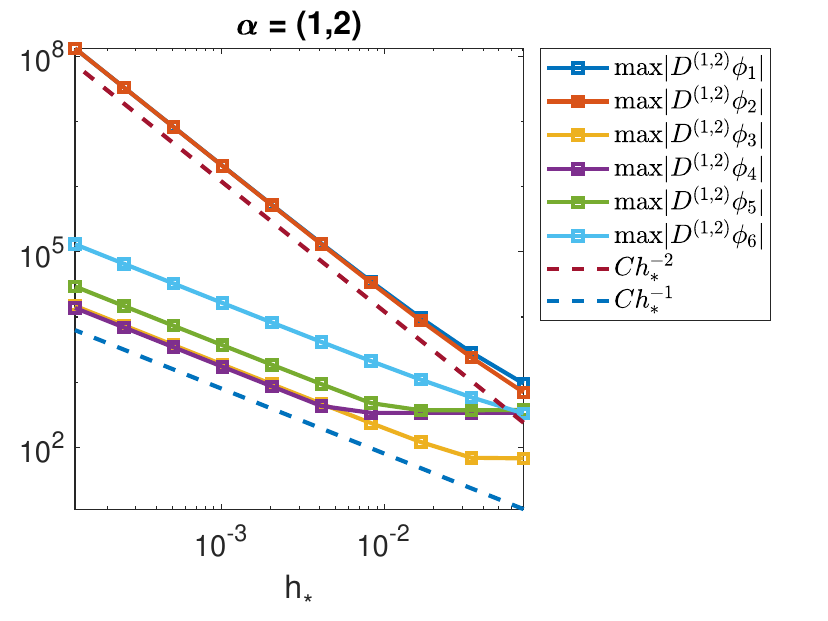}
	\caption{Relation between $\max_{\vx\in K}|D^{\valpha}\phi_i(\vx)|$ and $h_*$ for $\valpha=(1,2)$ on $K_2$.}\label{fig:single-dphi-K2-12}
\end{figure}

Based on the above observation, we suspect that the directional derivative, in the tangential direction of the short edge, may have a differential behavior than other directions.
To this end,  we plot these directional derivatives on $K_3$ in Fig. \ref{fig:single-dphi-K3-tangent}. 
The results appear to be the same as Fig. \ref{fig:single-dphi-K1} and Fig. \ref{fig:single-dphi-K2}, which seem to confirm our conclusion.

\begin{figure}[h]
	\centering
	\includegraphics[width=6cm]{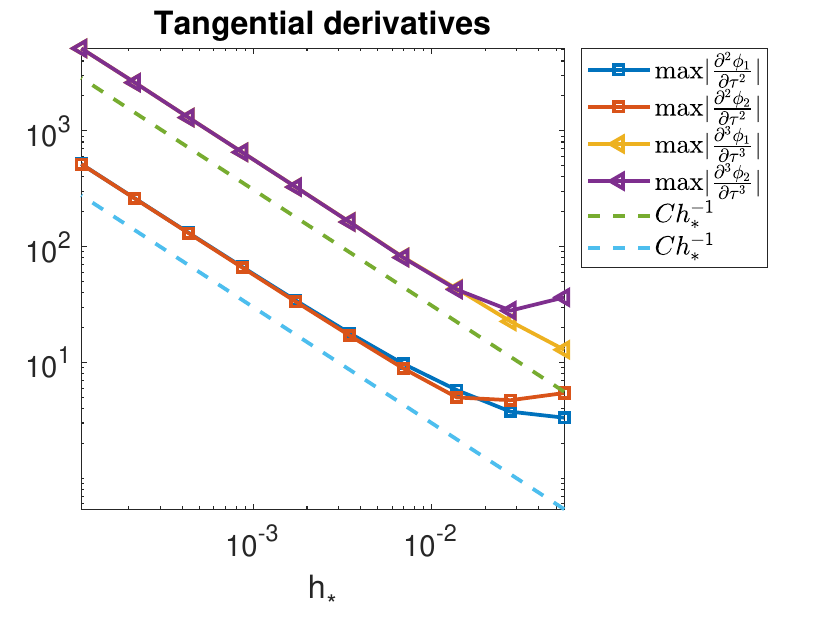}\;
	\includegraphics[width=6cm]{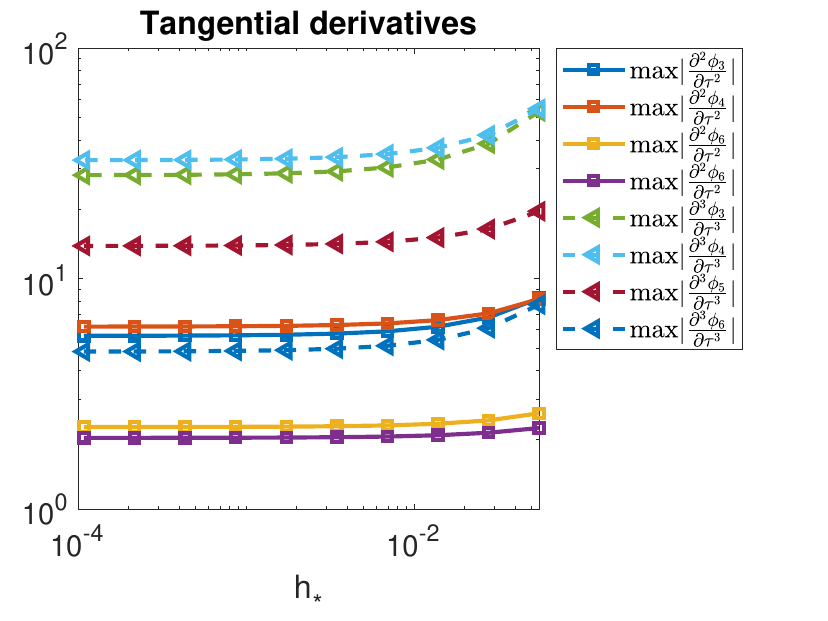}\;
	\caption{Relation between $\max_{\vx\in K}|\frac{\partial^{2}\phi_i(\vx)}{\partial\tau^2}|$, $\max_{\vx\in K}|\frac{\partial^{3}\phi_i(\vx)}{\partial\tau^3}|$ and $h_*$,
	with $\tau$ being the tangent direction of the short edge, on $K_3$: left panel for $i=1,\,2$, right panel for $i= 3,\,4,\,5,\,6$.}\label{fig:single-dphi-K3-tangent}
\end{figure}

In many graphs in Figs. \ref{fig:Lambda-alpha-K1}-\ref{fig:single-dphi-K3-tangent}, some lines are not straight. For now, we are not able to explain this.

\subsection{When does one need a higher ($|\valpha|>1$) derivative bound?} \label{subsec:FEM}
The present research was motivated by the application of GBC-based FEM to fourth-order elliptic equations.
Let $\Omega = (0,1)\times (0,1)$. 
Consider the clamped plate bending problem: Find $u \in H_0^2(\Omega)$ such that for all $v \in  H_0^2(\Omega)$,
\begin{equation}\label{weakform}
	\int_{\Omega} \left(\Delta u \Delta v + (1 - \sigma )(2\frac{\partial^2u}{\partial x\partial y}\frac{\partial^2v}{\partial x\partial y} - \frac{\partial^2u}{\partial x^2}\frac{\partial^2v}{\partial y^2} - \frac{\partial^2u}{\partial y^2}\frac{\partial^2v}{\partial x^2})\right)\mathrm{d}\vx = \int_{\Omega} fv\, \mathrm{d}\vx, 
\end{equation}
where $f \in L^2(\Omega)$ is a transverse force and $0<\sigma<\frac{1}{2}$ is the Poisson ratio.
One may also consider the simply supported plate model, with a little modification on the boundary conditions.
In \cite{tian2022}, we proposed a Morley-type, GBC-based polygonal FEM for Problem \eqref{weakform}.

We use the second order GBC-based element constructed by Floater and Lai \cite{LaiFloater2016} to define the local finite element space,
while the degrees of freedom are Morley-type, i.e., consisting of the nodal value at each vertex and the momentum of normal derivative on each edge.
The element constructed in \cite{tian2022} is a natural extension of the Morley element \cite{morley1968} to polygonal meshes.
To analyze its FEM approximation error, one needs the upper bound of $ \nabla^2\phi_i$.

In \cite{tian2022}, we were not able to prove this upper bound at that time, but making it an assumption. 
Under this assumption, 
optimal FEM approximation error $|u-u_h|_{2,\Omega}=O(h)$ and $|u-u_h|_{1,\Omega}= O(h^2)$ are proved in \cite{tian2022}.
The present work fills this missing theoretical step.

 For reader's convenience, we attach below some numerical results from \cite{tian2022}.
Consider the clamped plate with exact solution $u(x,y) = x^2y^2(1-x)^2(1-y)^2$, $u|_{\partial\Omega} = \frac{\partial u}{\partial\vn}|_{\partial\Omega} \equiv 0$, 
and the simply supported plate with exact solution $u(x,y) = e^x\sin (2\pi x)\sin(2\pi y)$,  $u|_{\partial\Omega} \equiv 0, \frac{\partial u}{\partial\vn}|_{\partial\Omega} \not\equiv 0$.
We test the problems on five different types of meshes, as shown in Fig. \ref{fig:meshes}. 
Numerical results are reported in Fig. \ref{fig:allmeshu}-\ref{fig:allmeshU}, where
one observes an $O(h)$ convergence in the $H^2$-seminorm and an $O(h^2)$ convergence in the $H^1$-seminorm.
Both agree well with the theoretical prediction.

\begin{figure}[h]
	\begin{center}
		\includegraphics[width=3.8cm]{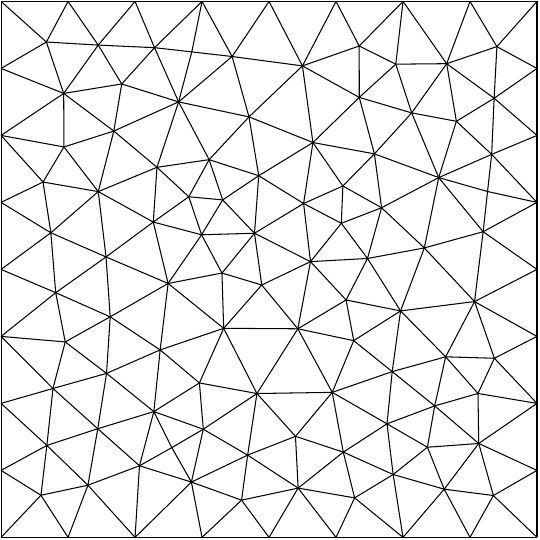}\;
		\includegraphics[width=3.8cm]{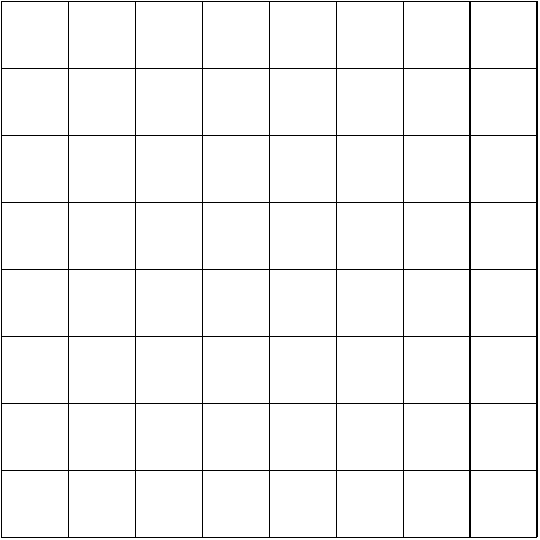}\;
		\includegraphics[width=3.8cm]{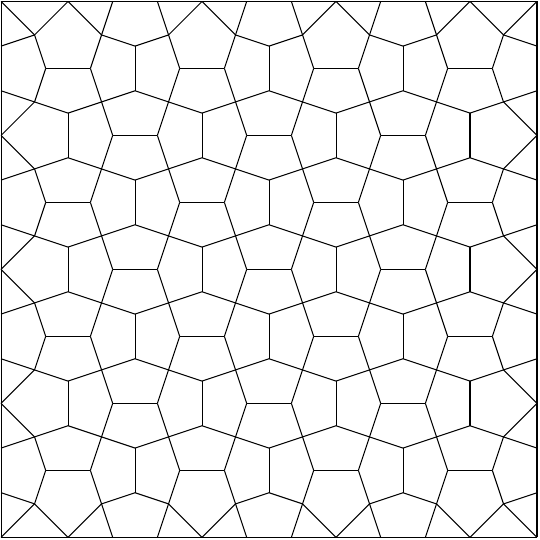}\;
		\includegraphics[width=3.8cm]{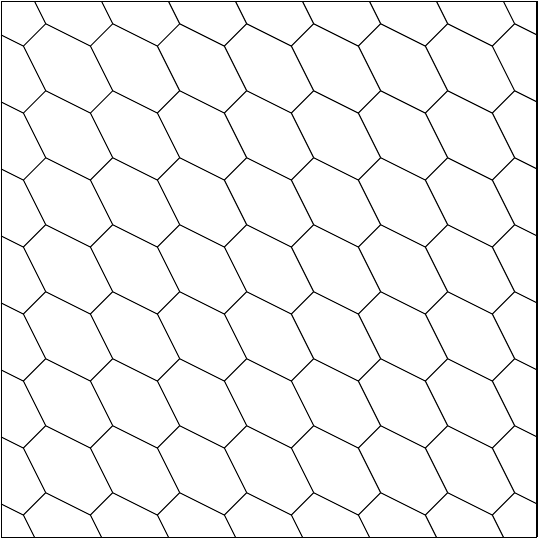}\;
		\includegraphics[width=3.8cm]{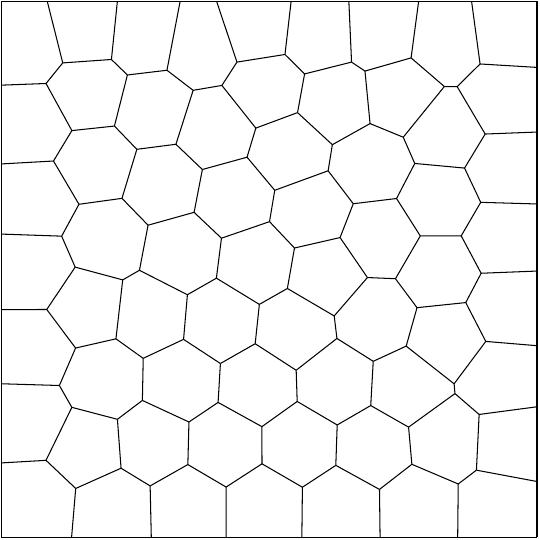}\;
	\end{center}
	\caption{Meshes of size $8\times 8$. (1)\ A triangle mesh. (2)\ A rectangular mesh. (3)\ A pentagons mesh, with mostly pentagons and a few triangles. (4)\ A hexagonal mesh, with mostly hexagons and a few pentagons and quadrilaterals. (5)\ Centroidal Voronoi tessellation.} \label{fig:meshes}
\end{figure}

\begin{figure}[h]
	\begin{center}
		\includegraphics[width=5cm]{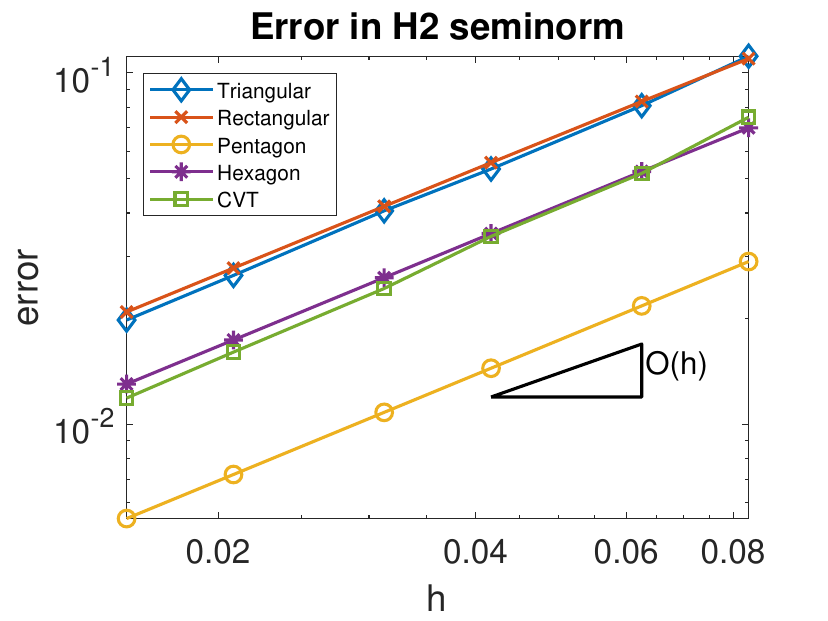}\;
		\includegraphics[width=5cm]{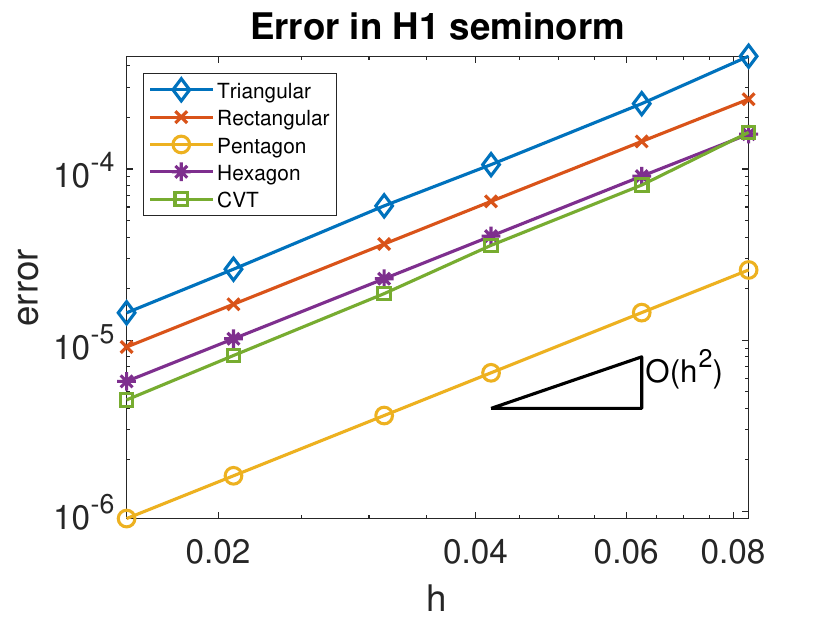}\;
	\end{center}
	\caption{$|u-u_h|_{2,\Omega}$ and $|u-u_h|_{1,\Omega}$ of Morley-type GBC-based FEM for clamped plate on five types of meshes in Fig. \ref{fig:meshes}.}\label{fig:allmeshu}
\end{figure}

\begin{figure}[h]
	\begin{center}
		\includegraphics[width=5cm]{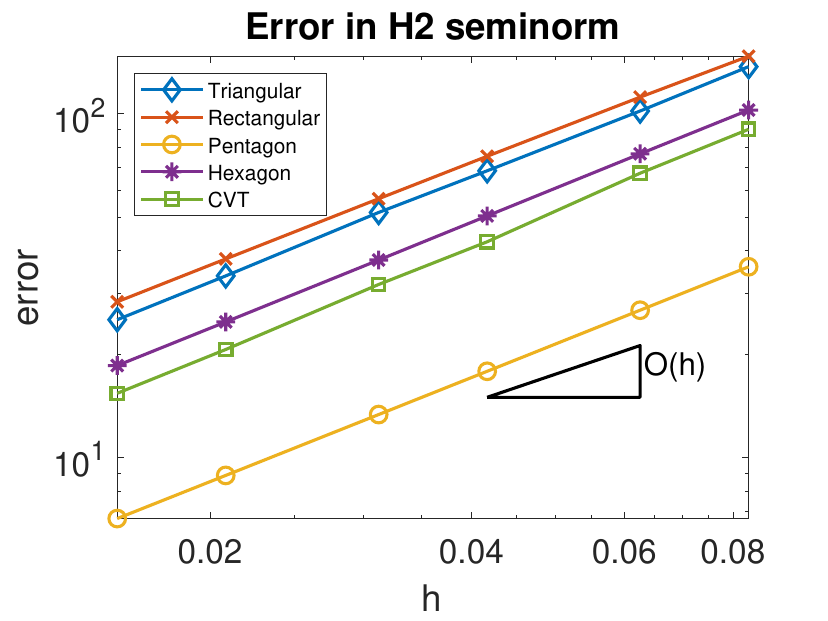}\;
		\includegraphics[width=5cm]{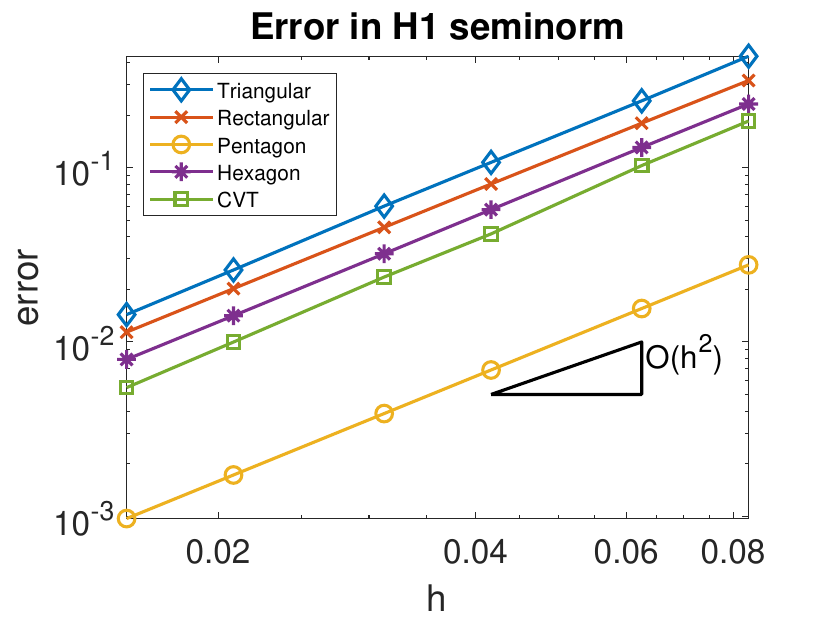}\;
	\end{center}
	\caption{$|u-u_h|_{2,\Omega}$ and $|u-u_h|_{1,\Omega}$ of Morley-type GBC-based FEM for simply supported plate on five types of meshes in Fig. \ref{fig:meshes}.}\label{fig:allmeshU}
\end{figure}

\end{document}